\newtheorem{theorem}{Theorem}[section]
\theoremstyle{plain}
\newtheorem{defi}[theorem]{Definition}
\newtheorem{lemma}[theorem]{Lemma}
\newtheorem{prop}[theorem]{Proposition}
\newtheorem{remark}[theorem]{Remark}
\numberwithin{equation}{section}
\def\beps{\boldsymbol{\eps}}
\def\One{{1\!\!1}}
\def\dist{{\rm dist}}
\def\Hk{{\mathcal H}}
\def\half{\frac{1}{2}}
\newcommand{\gam}{\gamma}
\newcommand{\om}{\omega}
\newcommand{\Gam}{\Gamma}
\newcommand{\sig}{\sigma}
\def\by{\vec{y}}
\def\bbe{{\bf e}}
\def\bu{{\mathbf u}}
\newcommand{\R}{{\mathbb R}}
\newcommand{\Q}{{\mathbb Q}}
\newcommand{\Z}{{\mathbb Z}}
\newcommand{\C}{{\mathbb C}}
\def\N{{\mathbb N}}
\newcommand{\Nat}{{\mathbb N}}
\def\bp{{\bf p}}
\def\wt{\widetilde}
\def\A{{\mathcal A}}
\def\Qk{{\mathcal Q}}
\def\Pk{{\mathcal P}}
\def\Sf{{\sf S}}
\def\T{{\mathbb T}}
\def\bx{{\mathbf x}}
\def\bw{{\mathbf w}}
\def\by{{\mathbf y}}
\def\bv{{\mathbf v}}
\def\bz{{\mathbf z}}
\def\be{\begin{equation}}
	\def\ee{\end{equation}}
\newcommand{\eps}{{\varepsilon}}
\def\ov{\overline}
\def\und{\underline}
\newcommand{\const}{{\rm const}}
\def\Span{{\rm Span}}
\def\a{a}
\def\ve1{\vec{1}}
\def\Ak{{\mathcal A}}
\def\Cyl{{\rm Cyl}}
\def\re{{\rm Re}}
\def\im{{\rm Im}}
\def\Tr{{\rm Tr}}
\newcommand{\abs}[1]{\lvert#1\rvert}
\providecommand{\norm}[1]{\lvert\lvert #1 \rvert\rvert}
\begin{document}
	
	\title[Local spectral estimates for substitutions]{Local spectral estimates and quantitative \\[1.2ex] weak mixing for substitution $\Z$-actions}
	
	\author{Alexander I. Bufetov}
	\address{Alexander I. Bufetov\\ 
		Steklov  Mathematical Institute of RAS, Moscow, Russia}
	\address{Aix-Marseille Universit{\'e}, CNRS, Centrale Marseille, I2M, UMR 7373\\
		39 rue F. Joliot Curie, Marseille, France}
	\address{The University of Saint-Petersburg, Department of Mathematics and Computer Science, Saint-Petersburg, Russia}
	\email{alexander.bufetov@univ-amu.fr, bufetov@mi.ras.ru}
	\author{Juan Marshall-Maldonado}
	\address{Juan Marshall-Maldonado\\ Department of Mathematics,
		Bar-Ilan University, Ramat-Gan, Israel}
	\curraddr{Faculty of Mathematics and Computer Science, Nicolaus
		Copernicus University, ul. Chopina 12/18, 87-100, Toru\'n, Poland}
	\email{jgmarshall21@gmail.com}
	\author{Boris Solomyak }
	\address{Boris Solomyak\\ Department of Mathematics,
		Bar-Ilan University, Ramat-Gan, Israel}
	\email{bsolom3@gmail.com}
	
	\thanks{The research of J.M. and B.S. was supported by the Israel Science Foundation grant \#1647/23.
	A.B.'s research was partially supported by the BASIS Foundation under grant 24-7-1-21-1, as well as by the Ministry of Education and Science of the Russian Federation under grant 075-15-2024-631.}
	
	\begin{abstract}
		The paper investigates H\"older and log-H\"older
		regularity of  spectral measures for weakly mixing substitutions and the related question of quantitative weak mixing. It is assumed that the substitution is primitive, aperiodic, 
		and its substitution matrix is irreducible over the rationals. 
		In the case when there are no eigenvalues of the substitution matrix on the unit circle, Theorem~\ref{th-main1} says that
		a weakly mixing substitution $\Z$-action
		has uniformly log-H\"older regular spectral measures, and hence admits 
		power-logarithmic bounds for the rate of weak mixing. In the more delicate Salem substitution case, Theorem~\ref{th:Salem} says that 
		H\"older regularity holds for spectral parameters from the respective number field, but the H\"older exponent cannot be chosen uniformly.
	\end{abstract}
	
	\date{\today}

	\keywords{Substitution dynamical system; spectral measure; log-H\"older estimate}
	\subjclass[2020]{37A30, 37B10, 47A11}

	\maketitle

	\thispagestyle{empty}
	
	\section{Introduction}
	
	The paper is devoted to the spectral theory of substitution automorphisms. Substitutions provide a rich class of measure-preserving transformations, intermediate
	between periodic and random. They also serve as models and test cases for more complicated systems, such as interval exchange transformations, finite rank systems, etc.
	Substitutions have links with combinatorics, number theory, theoretical computer science (finite state automata), harmonic analysis, and physics/material science (quasicrystals), see \cite{Siegel}.
	Although dynamical systems arising from substitutions have been studied for a long time, their spectral properties are still far from being completely understood.
	
	Here we continue the investigation of local spectral properties  for substitution automorphisms, started in \cite{BuSo14,BuSo18b} and \cite{Marshall20}.
	and obtain estimates of the decay rate of the spectral measure of a small neighborhood of a point, when the radius tends to zero. We need to assume
	that the spectral measure has no point masses, or in other words, that the system is weakly mixing. It is also known that {\em uniform} local spectral estimates (for test functions
	of zero mean) are closely related to rates of {\em quantitative weak mixing}, see e.g., \cite{Knill}.
	
	We proceed with a brief summary of our main results.
	In Theorem~\ref{th-main1} we prove that for a primitive aperiodic substitution with a substitution matrix irreducible over $\Q$ substitution matrix
	and having no eigenvalues on the unit circle, there is
	a dichotomy: either the substitution $\Z$-action has non-trivial discrete spectrum, or spectral measures of cylindrical functions of zero mean are uniformly log-H\"older regular.
	The log-H\"older estimates
	yield logarithmic decay rates of quantitative weak mixing, cf.\ Theorem~\ref{th-main2}. Uniformity here means uniform estimates for all points on the unit circle.
	
	Our second main result concerns substitutions for which the substitution matrix
	$\Sf$ is still irreducible over the rationals, but now the
	Perron-Frobenius eigenvalue of $\Sf$ is a Salem numbers $\alpha$, that is, all the remaining eigenvalues of $\Sf$ are of modulus $\le 1$, but {\em there are} eigenvalues on the
	unit circle. It is proven that at spectral parameters from the field $\Q(\alpha)$, spectral measures of cylindrical functions satisfy H\"older bounds.
	On the other hand, we show that the H\"older exponent cannot be chosen uniformly over all algebraic spectral parameters.
	
	The key difference between this paper and \cite{BuSo14,BuSo18b,Marshall20} is that here we obtain spectral estimates for substitution $\Z$-actions, whereas in those papers we 
	were mainly concerned with suspension flows over them, especially for {\em self-similar} substitution $\R$-actions. 
	It is well-known that spectral properties may change drastically under
	Kakutani equivalence, that is, time change, 
	or when passing to a suspension.
	Although our basic approach, via the
	spectral cocycle, does go back to those earlier papers, there are additional complications in the case of $\Z$-actions. 
	In particular, the criteria for weak mixing are different: Whereas for the self-similar substitution flow, absence of weak mixing is equivalent to the Perron-Frobenius eigenvalue $\alpha$ of $\Sf$ being a Pisot number, for the substitution subshift this condition is sufficient, but not necessary.

	A technical novelty is the scheme
	of approximation of toral automorphism orbits by lattice points, developed in Section 3.
	method from \cite{BuSo21}. Given a spectral parameter $\om\in [0,1]$, we consider the distances to the nearest lattice point from the orbit
	${(\Sf^{\sf T})}^n (\om \vec{1})$. It is known that $e^{2\pi i \om}$ is an eigenvalue of the substitution automorphism whenever these distances tend to zero. This is known as ``Veech criterion'' in the context of interval exchange transformations, see \cite[\S7]{veechamj}. Having these distances bounded away from zero, with some positive frequency (the usual one, or logarithmic, or something more general)
	uniformly in $\om$,
	implies a corresponding rate of quantitative weak mixing. This principle is referred to as ``Quantitative Veech criterion," see \cite[Section 5]{BuSo21} or \cite[Section 6]{AFS}. In the current paper it is manifested by equation \eqref{ineq1}, and Lemma~\ref{lem2} is the ``engine'' which drives this.
	
		Another innovation is a lower bound on the norm of a matrix product, where every matrix is a small perturbation of a fixed primitive matrix,
	see Lemma  \ref{lem:iter2}, which was used to prove lower bounds on the rates of quantitative weak mixing for Salem substitution.
	It was inspired by the {\em Avalanche Principle} of Goldstein and Schlag \cite{GS}.

	\section{Statement of results}
	
	\subsection{Background}
	The standard references for the basic facts on substitution dynamical systems are \cite{Queff,Siegel}.
	Consider an alphabet of $d\ge 2$ symbols $\Ak=\{0,\ldots,d-1\}$. Let  $\A^+$ be the set of nonempty words with letters in $\A$. 
	A {\em substitution}  is a map $\zeta:\,\A\to \A^+$, extended to 
	$\A^+$ and $\A^{\N}$ by
	concatenation. The {\em substitution space} is defined as the set of bi-infinite sequences $x\in \A^\Z$ such that any word  in $x$
	appears as a subword of $\zeta^n(\a)$ for some $\a\in \A$ and $n\in \N$. The {\em substitution dynamical system}  is the left
	shift on $\A^\Z$ restricted to $X_\zeta$, which we denote by $T$.\\
	
	The {\em substitution matrix} $\Sf=\Sf_\zeta=(\Sf(i,j))$ is the $d\times d$ matrix, such that $\Sf(i,j)$ is the number
	of symbols $i$ in $\zeta(j)$. The substitution is {\em primitive} if $\Sf_\zeta^n$ has all entries strictly positive for some $n\in \Nat$.
	It is well-known that primitive substitution $\Z$-actions are minimal and uniquely ergodic, see \cite{Queff}.
	We assume that the substitution is primitive and {\em non-periodic}, which in the primitive case is equivalent to the space $X_\zeta$ being infinite.
	The length of a word $u$ is denoted by $|u|$. The substitution $\zeta$ is said to be of {\em constant length} $q$ if $|\zeta(a)|=q$ for all $a\in \A$, otherwise, it is of {\em non-constant length}.
	
	Recall that for $f,g\in L^2(X_\zeta,\mu)$ the (complex) spectral measure $\sig_{f,g}$ is determined by the equations
	$$\widehat{\sig}_{f,g}(-k) =\int_0^1 e^{2\pi i k \om}\,d\sig_{f,g}(\om)=
	\langle f\circ T^k, g\rangle,\ \ k\in \Z,
	$$
	where $\langle \cdot,\cdot \rangle$ denotes the scalar product in $L^2$. We write $\sig_f = \sig_{f,f}$. Spectral measures ``live'' on the torus $\R/\Z$, which we identify with $[0,1)$.
	It is known that $\sig_\One = \delta_0$, where $\One$ is the constant-1 function and $\delta_0$ is the Dirac point mass at $0$. Thus, the ``interesting part'' of the spectrum is generated by functions $f$ orthogonal to constants.
	We   say that a function $f\in L^2(X_\zeta,\mu)$ is {\em cylindrical of level 0} if it depends only on $x_0$, the 0-th term of the sequence $x\in X_\zeta$. Cylindrical functions of level 0 form a $d$-dimensional vector space, with a basis
	$\{\One_{[a]}: \, a\in \Ak\}$. Denote by $\Cyl(X_\zeta)$ the space of cylindrical functions of level 0.
	
	\subsection{Log-H\"older regularity and quantitative weak mixing}
	We need to recall the criterion for $e^{2\pi i \om}$ to be an eigenvalue of the substitution 
	$\Z$-action $(X_\zeta,T,\mu)$. The criterion is due to Host \cite{Host}, with an algebraic characterization given in \cite{FMN}.
	Partial results were obtained in \cite{Liv,SolAdic}.
	For us the most convenient reference is \cite{CS1},
	which contains a criterion for eigenvalues of a 1-dimensional tiling dynamical system (suspension flow). It is known that the
	suspension $\R$-action with a constant roof function has essentially the same spectral  properties as the $\Z$-action (see \cite{Goldberg,BerSol}).
	
	A word $v\in \Ak^+$ is called a {\em return word} for $\zeta$ if $v$ starts with a letter $c$ for some $c\in \Ak$ and $vc$ is admissible for the language of $X_\zeta$. Denote by 
	$\ell(v) = [\ell(v)_j]_{j\le d}$ the {\em population vector} of $v\in \Ak^+$, where $\ell(v)_j$ is the number of letters $j$ in $v$.
	
	\begin{theorem}[{Host \cite{Host}, see also \cite{CS1}}] \label{th-host}
		Let $\zeta$ be a primitive aperiodic substitution with the incidence matrix $\Sf$. Then $e^{2\pi i \om},\ \om\in (0,1)$, 
		is an eigenvalue for the substitution $\Z$-action $(X_\zeta,T,\mu)$ if and only if for all return words $v$,
		\be \label{eq:CS1}
		\om \bigl\langle   (\Sf^{\sf T})^n{\bf 1},\ell(v)\bigr\rangle \to 0\ \mbox{\rm (mod 1)}\ \ \mbox{as}\ n\to \infty.
		\ee
	\end{theorem}
	
	It is well-known and easy to deduce from Host's Theorem that if all the eigenvalues of $\Sf$, except the Perron-Frobenius eigenvalue, are inside the
	unit circle, i.e., we are in the ``Pisot case'', then there are non-trivial eigenvalues $e^{2\pi i \om}\ne 1$. This is proved in \cite[(6.2)]{Host}, where the eigenvalues are obtained from the frequencies of letters in an arbitrary sequence $x\in X_\zeta$. Since these frequencies are irrational, we in fact obtain a dense set of eigenvalues, since eigenvalues form a subgroup of the unit circle.
	If $\om$ is an eigenvalue, then there exists a cylindrical function $f$ such that $\sig_f$ has a point mass at $\om$ and hence no meaningful estimates of the modulus continuity for $\sig_f$ at
	$\om$ are possible. Our goal is to show that under some natural assumptions on the substitution, weak mixing implies uniform log-H\"older estimates.
	
	\begin{theorem} \label{th-main1}
		Let $\zeta$ be a primitive aperiodic substitution, such that the substitution matrix $\Sf=\Sf_\zeta$ has an irreducible characteristic polynomial over $\Q$ and there are no eigenvalues of $\Sf$
		on the unit circle. Then either $(X_\zeta,T,\mu)$ has a non-trivial discrete  component of the spectrum, or there exist $C_\zeta, r_0, \gam>0$, depending only on $\zeta$, such that for 
		any cylindrical function $f$ with $\int f\,d\mu=0$, for 
		all $\om\in [0,1)$ holds
		\be \label{goal1}
		\sig_f(B_r(\om)) \le C_\zeta\|f\|^2_\infty\cdot (\log(1/r))^{-\gam},\ \ \mbox{for all}\ r\in (0,r_0).
		\ee
	\end{theorem}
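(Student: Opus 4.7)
The plan is to apply the spectral cocycle framework of \cite{BuSo14,BuSo18b,Marshall20}, reduce the theorem to a uniform polynomial-in-$n$ decay of the cocycle restricted to the complement of the Perron direction, and then establish that decay by a quantitative upgrade of Host's criterion (Theorem~\ref{th-host}) via a vector Erd\H os--Kahane argument adapted from \cite{BuSo21}.

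\emph{Step 1: reduction to polynomial decay of the spectral cocycle.} To the primitive substitution $\zeta$ is associated a matrix-valued spectral cocycle $\Mk^{(n)}(\om)$ on the torus, and a standard Fej\'er-kernel argument applied to twisted Birkhoff sums yields, for any cylindrical zero-mean $f$ with coefficient vector $\vec v_f$,
$$
\sig_f\bigl(B_r(\om)\bigr) \le C\,\|f\|_\infty^2 \cdot \lam_1^{-2n}\bigl\|\Mk^{(n)}(\om)\,\vec v_f\bigr\|^2, \qquad r \asymp \lam_1^{-n},
$$
where $\lam_1$ is the Perron--Frobenius eigenvalue of $\Sf$. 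Since $n \asymp \log(1/r)$, obtaining the claimed log-H\"older estimate with exponent $\gam$ amounts to proving the uniform polynomial bound
$$
\lam_1^{-n}\bigl\|\Mk^{(n)}(\om)\,\vec v\bigr\| \le C_0\, n^{-\gam/2}, \qquad \forall\,\om\in[0,1),\ \vec v \in V^{\perp},\ \|\vec v\|=1,
$$
where $V^{\perp}\subset\R^d$ is the $(d{-}1)$-dimensional subspace transverse to the Perron eigenline of $\Sf$.

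\emph{Step 2: quantitative Host and hyperbolic splitting.} By Theorem~\ref{th-host}, the assumption that $(X_\zeta,T,\mu)$ has only trivial discrete spectrum means that for every $\om\in(0,1)$ some return word $v$ satisfies $\om\langle\Sf^n\ell(v),\one\rangle\not\to 0\pmod 1$. The ratio $\lam_1^{-n}\|\Mk^{(n)}(\om)\vec v\|$ is controlled from above by how often, for many $n$, such sequences stay macroscopically away from $\Z$, so a quantitative version of this non-convergence implies the required decay. Irreducibility of $\chi_\Sf$ over $\Q$ forces the non-Perron eigenvalues of $\Sf$ to be Galois conjugates of $\lam_1$, and the absence of unit-circle eigenvalues yields a uniformly hyperbolic splitting $V^{\perp}=E^{s}\oplus E^{u}$. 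Decomposing $\ell(v)$ in this splitting, stable components are negligible and the obstruction to Host's relation is detected on the expanding subspace, whose size is amenable to iteration.

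\emph{Step 3: vector Erd\H os--Kahane iteration, and main obstacle.} Following \cite{BuSo21}, I would show that for $\eta$ close to $1$ the ``bad set''
$$
E_{n,\eta} = \bigl\{\om\in[0,1):\lam_1^{-n}\|\Mk^{(n)}(\om)\vec v\|\ge \eta\bigr\}
$$
lies in at most $n^{O(1)}$ arcs of length $\asymp\lam_1^{-n}$, each centered near an approximate Host solution, and that iterating this step together with the Diophantine rigidity of $\Q(\lam_1)$ shows that remaining in $E_{n,\eta_n}$ along an increasing sequence of scales forces $\om$ to satisfy Host's condition for every return word --- hence, by hypothesis, to equal $0$. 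Tracking the exponent in this recursion produces the desired polynomial rate $\eta_n = n^{-\gam/2}$. The chief difficulty will be achieving \emph{uniformity} in $\om$: a Borel--Cantelli version of the argument gives polynomial decay for a.e.\ $\om$, and a compactness argument gives decay at each $\om$ with an $\om$-dependent exponent, but producing single constants $C_\zeta,\gam$ valid on the entire torus requires excluding sequences $\om_k\to\om_\infty$ whose exponents degenerate to $0$. One excludes them by showing such a sequence would force $\om_\infty$ to satisfy Host's condition for every return word, contradicting the assumed absence of discrete spectrum. Implementing this compactness in the presence of expanding non-Perron eigenvalues of $\Sf$ --- the non-Pisot case, which is allowed by our hypotheses --- is the principal new technical ingredient beyond the self-similar $\R$-action analysis of \cite{BuSo21}.
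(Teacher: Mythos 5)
Your Step 1--2 reduction is in the right spirit, and you correctly identify the quantitative-Host philosophy and the role of the hyperbolic splitting. The actual paper works with a closely related but more elementary device: the product bound on twisted Birkhoff sums from \cite[Proposition 3.5]{BuSo14} (Proposition~\ref{prop:BuSo} here), which turns the problem into showing that $\|(\Sf^{\sf T})^n\om\mathbf{1}\|_{\R^d/\Gam^*}$ is macroscopically bounded below at least once every window of size $\sim Kn$. Your cocycle-norm formulation is an acceptable alternative reduction.

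The substantive divergence is in Step 3, and there you have a genuine gap. You propose an Erd\H os--Kahane covering argument (bad set covered by $n^{O(1)}$ arcs of length $\asymp\lam_1^{-n}$) followed by Borel--Cantelli and then a compactness upgrade to uniformity. The EK method is fundamentally measure-theoretic: it controls the \emph{size} of the exceptional set of $\om$, not the behavior at a fixed $\om$, so Borel--Cantelli hands you a.e.\ decay but no pointwise bound, and the compactness step is supposed to bridge the gap. But the bridge does not hold as stated. If $\om_k\to\om_\infty$ with decay exponents degenerating to $0$, nothing forces $\om_\infty$ to satisfy Host's condition for every return word: Host's criterion is an asymptotic condition in $n$, and near-misses along $\om_k$ can come from longer and longer prefixes of the recurrence without the limit $\om_\infty$ satisfying the condition at all. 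Indeed, the paper's own Proposition~\ref{prop:main} explicitly shows the decay constant \emph{does} blow up as $\om\to q^{-1}\Z$, and the limit points $\om_\infty\in q^{-1}\Z$ are generically \emph{not} eigenvalues (when the spectrum is weakly mixing), so your intended contradiction fails. What the paper does instead is entirely pointwise and algebraic: it uses the separation lemma for polynomial expressions in algebraic numbers (Lemma~\ref{hochman}, a Garsia/Hochman-type bound) to show that at a fixed $\om$, either the discrepancies $\delta_j^{(n)}$ all coincide at some stage $n\ge n_0(\om)$ --- in which case an explicit number $e^{2\pi iqP_n(1/\theta_1)}$ (not $\om$ itself!) is exhibited as a nontrivial eigenvalue, contradicting weak mixing --- or the window-return property holds for all $n\ge n_0(\om)$, giving an \emph{explicit} log-H\"older exponent $\gam$ independent of $\om$, with the $\om$-dependence isolated in a factor $\log\bigl(1/\dist(\om,q^{-1}\Z)\bigr)^{\gam}$. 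Uniformity over $\om$ is then obtained not by compactness but by \emph{gluing}: near the exceptional points $q^{-1}\Z$ one instead uses the H\"older bound at rational points that follows from Adamczewski's Birkhoff-sum estimates. So the missing idea in your proposal is the algebraic separation lemma giving pointwise quantification, and the missing device is the gluing across $q^{-1}\Z$ in place of a compactness argument that, as it stands, does not close.
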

	
	It is important  that in this theorem the estimate is uniform in $\om$: this implies a result on quantitative weak mixing.
	
	\begin{theorem} \label{th-main2}
		Let $\zeta$ be as in the last theorem, such that $(X_\zeta,T,\mu)$ is weakly mixing. Then there are constants $K_\zeta,\gam>0$ such that for any 
		cylindrical function $f$ with $\int f\,d\mu=0$ and any $g\in L^2(X_\zeta,\mu)$, holds
		\be \label{goal2}
		\frac{1}{N} \sum_{k=0}^{N-1} |\langle U^k f, g\rangle|^2 \le K_\zeta {\|f\|}^2_\infty \cdot {\|g\|}_2^2 \cdot (\log N)^{-\gam}.
		\ee
	\end{theorem}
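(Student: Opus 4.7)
The plan is to derive \eqref{goal2} directly from the uniform log-H\"older bound \eqref{goal1} by a standard spectral-theoretic argument that converts Ces\`aro averages of correlations into a statement about local concentration of the spectral measure. By the spectral theorem, decompose $g=g_1+g_2$, where $g_1$ lies in the closed $U$-invariant cyclic subspace $Z_f$ generated by $f$ and $g_2\perp Z_f$. Since every $U^k f$ lies in $Z_f$, one has $\langle U^k f,g\rangle=\langle U^k f,g_1\rangle$; moreover, on $Z_f$ the cross-measure $\sig_{f,g_1}$ is absolutely continuous with respect to $\sig_f$ with some density $h\in L^2(\sig_f)$ satisfying $\|h\|_{L^2(\sig_f)}=\|g_1\|_2\le\|g\|_2$. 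Hence $\langle U^k f,g\rangle=\int_0^1 e^{2\pi i k\om}h(\om)\,d\sig_f(\om)$ for every $k\in\Z$.

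Expanding the modulus squared and applying Fubini yields
$$
\frac{1}{N}\sum_{k=0}^{N-1}|\langle U^k f,g\rangle|^{2}=\int\!\!\!\int F_N(\om-\om')\,h(\om)\overline{h(\om')}\,d\sig_f(\om)\,d\sig_f(\om'),
$$
where $F_N(\th)=N^{-1}\sum_{k=0}^{N-1}e^{2\pi i k\th}$ satisfies $|F_N(\th)|\le\min(1,(N|\th|)^{-1})$ on $[-1/2,1/2]$. Using $2|h(\om)\overline{h(\om')}|\le|h(\om)|^{2}+|h(\om')|^{2}$ and Fubini, the Ces\`aro sum is controlled by $\|h\|_{L^2(\sig_f)}^{2}\cdot\sup_\om (|F_N|*\sig_f)(\om)$. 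To estimate this convolution at a fixed $\om$, partition $[0,1)$ into $B_{1/N}(\om)$ and dyadic annuli of radius $\sim 2^{j}/N$ for $j=0,1,\ldots,\lfloor\log_2 N\rfloor$; on the $j$-th annulus $|F_N|\le 2^{-j}$, so
$$
(|F_N|*\sig_f)(\om)\le \sig_f(B_{1/N}(\om))+\sum_{j\ge 0}2^{-j}\,\sig_f(B_{2^{j+1}/N}(\om)).
$$
Applying \eqref{goal1} in the ``near'' range $j\le\tfrac12\log_2 N$ and the trivial total-mass bound $\sig_f(\T)\le\|f\|_2^2\le\|f\|_\infty^2$ in the ``far'' range $j>\tfrac12\log_2 N$ collapses the dyadic sum into a single factor of $(\log N)^{-\gam}$, yielding $(|F_N|*\sig_f)(\om)\le C'_\zeta\|f\|_\infty^{2}(\log N)^{-\gam}$ uniformly in $\om\in[0,1)$ for all $N\ge N_0$.

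Combining these estimates gives \eqref{goal2} with a suitable constant $K_\zeta$. The main obstacle is the careful handling of the dyadic sum in the second step, so as not to lose a spurious $\log N$ factor: splitting the decomposition at $j\sim\tfrac12\log_2 N$, with the log-H\"older bound controlling the near range and the total-mass bound controlling the far range, is what prevents such a loss. Once this is in place, Theorem~\ref{th-main2} is essentially a formal corollary of Theorem~\ref{th-main1}, as no further properties of the substitution beyond the uniform estimate \eqref{goal1} enter the argument.
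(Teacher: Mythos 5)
Your argument is correct and is precisely the standard Strichartz-type derivation that the paper invokes by citation (to Knill, Forni, AFS, Moll, etc.) rather than writing out: reduce to the cyclic subspace $Z_f$, express the Ces\`aro average as a double integral against the Fej\'er-type kernel $F_N$, bound it by $\|g\|_2^2\cdot\sup_\om(|F_N|*\sig_f)(\om)$, and control the convolution via a dyadic decomposition split at scale $\sim N^{-1/2}$, using \eqref{goal1} in the near range and the trivial total-mass bound $\sig_f(\T)=\|f\|_2^2\le\|f\|_\infty^2$ in the far range. The only cosmetic point worth noting is that the near-range application of \eqref{goal1} requires $2/\sqrt{N}<r_0$, i.e.\ $N$ large, but the finitely many smaller $N$ are absorbed into $K_\zeta$ via the trivial bound $\frac{1}{N}\sum_{k<N}|\langle U^kf,g\rangle|^2\le\|f\|_2^2\|g\|_2^2$; this is implicit in your write-up and harmless.
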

	
	The derivation of \eqref{goal2} from \eqref{goal1} is well-known; see e.g., \cite{BuSo18b,Forni,AFS,Moll23}. It essentially goes back to the work of Strichartz \cite{Stri90}, see
	\cite[Theorem 3.6]{Knill}. The paper of Knill \cite{Knill} contains
	additional background on quantitative weak mixing. From another point of view, quantitative weak mixing was recently studied in \cite{Park_Park}.
	
	\begin{remark} {\em 
			
			{\bf (i)} We restrict ourselves to the case of cylindrical functions of level 0 for simplicity. It is not difficult to extend the results to the case of cylindrical functions of any level, and then
			to Lipschitz functions by approximation. Cylindrical functions of level $\ell\ge 1$ can be handled either in the framework of suspension flows (with a constant roof function) and Lip-cylindrical functions, as we do in \cite[Section 3.5 and Proposition 7.1]{BuSo18} and \cite{BuSo21}, or directly in the
			setting of substitution $\Z$-actions, as was done in \cite[Section 6]{Moll23}. Then arbitrary Lipschitz functions on the substitution spaces can be treated by approximation, as in 
			\cite[Section 9]{BuSo18} or \cite[Section 9]{Moll23}. Similar considerations are found in \cite{AFS}. \\

			{\bf (ii)} Nelson Moll \cite{Moll23} recently investigated the speed of weak mixing for the Chacon map, which is conjugate to a primitive substitution $\zeta: 0\mapsto 0012, \ 1\mapsto 12,\
			2\mapsto 012$. Note that its substitution matrix has an eigenvalue 1.
			He obtained power-logarithmic upper bounds for the spectral measures and corresponding estimates for the speed of weak mixing, for Lipschitz observables. His
			method extends to substitutions other than the one conjugate to Chacon, but seems to be limited to substitutions whose incidence matrix has an eigenvalue $\pm 1$, with
			the corresponding eigenvector having a non-trivial projection on
			the vector $\mathbf{1}$. This is, of course, very different from the Salem case. Moll \cite{Moll23} also obtains power-logarithmic lower bounds for the speed of weak mixing for which he uses more specific properties of the Chacon
			map, proved in \cite{Park_Park}.
		}
	\end{remark}

	\subsection{Substitutions of Salem type}
	The case when the substitution matrix $\Sf_\zeta$ has an eigenvalue of the unit circle is more difficult than the previous one due to the absence of hyperbolicity. Here we consider substitutions of Salem type, that is, we assume that $\Sf_\zeta$
	has an irreducible characteristic polynomial and its
	Perron-Frobenius eigenvalue is a Salem number.
	 Recall that a Salem number is a real algebraic integer greater than $1$, whose conjugates have modulus at most equal to
	$1$, with at least one having modulus equal to $1$. In fact, 
	the degree of $\alpha$ is even, one of its conjugates is $\alpha^{-1}$ (inside the unit circle), and all the rest are non-real complex conjugate pairs of modulus 1.
	In \cite{Marshall20} 
	the second-named author obtained H\"older spectral estimates for the self-similar suspension flow over an arbitrary substitution of Salem type, at the points in $\Q(\alpha)$. Here we obtain an analogous result for substitution $\Z$-actions.
	On the other hand, we show in Theorem \ref{th:lower} below that {\em uniform} H\"older bounds cannot hold either for such flows or for the $\Z$-actions.

	\begin{theorem} \label{th:Salem}
		Let $\zeta$ be an aperiodic primitive substitution of Salem type, such that its substitution matrix has an irreducible characteristic polynomial over $\Q$. Let $\alpha$ be the Perron-Frobenius eigenvalue. For every
		$\omega \in (0,1)\cap\Q(\alpha)$ there exists $C>0$ only depending on the substitution, $r_0 = r_0(\omega)>0$ and $\vartheta=\vartheta(\omega)>0$ such that
		\be \label{hoelder_salem}
		\sigma_f(B_r(\omega)) \leq C r^\vartheta, \quad \text{for all } r < r_0 \text{ and } f\in\text{\rm Cyl}(X_\zeta).
		\ee
	\end{theorem}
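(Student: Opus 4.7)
My plan is to adapt the strategy of Marshall-Maldonado \cite{Marshall20} from the self-similar suspension flow to the substitution $\Z$-action, using the spectral cocycle $\Mk^{(n)}_\zeta(\omega)$ developed in \cite{BuSo14,BuSo18b}. This cocycle is the $d\times d$ matrix whose $(i,j)$-entry is $\sum_{k:\,\zeta^n(j)_k = i} e^{-2\pi i\omega L_k}$ with $L_k = |\zeta^n(j)_{[1,k-1]}|$, and at $\omega = 0$ it coincides with $\Sf^n$. For $f\in\Cyl(X_\zeta)$ with $\int f\, d\mu = 0$, the standard upper bound from that framework has the form
\[
\sigma_f(B_r(\omega)) \le C\|f\|_\infty^2 \, \alpha^{-n} \sup_{\xi\in B_r(\omega)} \|\Proj^\perp\Mk^{(n)}_\zeta(\xi)\|^2,
\]
valid for $r\asymp\alpha^{-n}$, where $\Proj^\perp$ denotes projection onto the $\Sf$-invariant complement of the Perron direction. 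Thus \eqref{hoelder_salem} at scale $r = \alpha^{-n}$ follows once one proves
\[
\|\Proj^\perp\Mk^{(n)}_\zeta(\xi)\| \le C'(\omega)\,\alpha^{n(1-\vartheta)/2}, \qquad \xi\in B_r(\omega),
\]
for some $\vartheta = \vartheta(\omega)>0$.

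The arithmetic input needed for this bound comes from the Salem structure of $\omega\in\Q(\alpha)$. Writing $\omega = P(\alpha)/N$ with $P\in\Z[X]$, $N\in\N$, and denoting the Galois conjugates of $\alpha$ by $\alpha = \alpha_1, \alpha_2 = \alpha^{-1}, \alpha_3, \bar\alpha_3, \ldots, \alpha_k, \bar\alpha_k$ (the last $2(k-1)$ on the unit circle, $2k = \deg\alpha$), the trace identity
\[
N\sum_{j=1}^{2k} \omega^{(j)}\alpha_j^n \in \Z, \qquad \omega^{(j)} := P(\alpha_j)/N,
\]
forces $\omega\alpha^n\bmod 1$ to be determined, up to an error $O(\alpha^{-n})$ coming from $\omega^{(2)}\alpha^{-n}$, by the bounded quasi-periodic trajectory $\sum_{j\ge 3}\omega^{(j)}\alpha_j^n$ on a torus of dimension $k-1$. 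Under the assumption that $\omega$ is not an eigenvalue for $(X_\zeta,T,\mu)$ (the only case in which $\sigma_f$ could carry an atom at $\omega$, by Theorem~\ref{th-host}), this supplies effective lower bounds on $\dist(\omega\alpha^n,\Z)$. Moreover, for $\xi\in B_r(\omega)$ with $r=\alpha^{-n}$ the phases $\xi\alpha^m\bmod 1$ remain $O(\alpha^{m-n})$-close to $\omega\alpha^m\bmod 1$ for all $m<n$, so the Diophantine control transfers from the point to the whole ball.

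The heart of the argument is then an Erd\H{o}s--Kahane-type analysis of $\Proj^\perp\Mk^{(n)}_\zeta(\xi)$ in the spirit of the vector techniques of \cite{BuSo21}. Telescoping the prefix lengths $L_k$ as combinations of $\langle \Sf^m \bbe_b, \One\rangle$ for $m<n$, the entries of the projected cocycle take the form of weighted exponential sums in the phases $\xi\langle\Sf^m\bbe_b,\One\rangle \bmod 1$. Along the Perron direction these phases coincide, up to $O(\alpha^{m-n})$, with $\xi\alpha^m\bmod 1$, and the quasi-periodic description from the previous step yields a Weyl-type cancellation estimate producing the desired submaximal growth. Substituting back into the spectral-cocycle bound and absorbing the $\omega$-dependent constant $C'(\omega)$ into a smaller $r_0(\omega)$ (at the cost of a small loss in the exponent) yields \eqref{hoelder_salem} with a constant $C$ depending only on $\zeta$.

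The main obstacle is the cocycle estimate along the neutral directions of $\Sf$ associated with the unit-circle conjugates of $\alpha$. In these directions there is no automatic contraction, and the trivial pointwise bound on the cocycle is the critical rate $\alpha^{n/2}$, which is exactly insufficient. Extracting a strictly submaximal exponent forces one to quantify cancellations in the oscillatory sums driven by $\omega\alpha^n\bmod 1$, using the Diophantine profile of $\omega$ (the denominator $N$ and the angular positions of the unit-circle projections $\omega^{(j)}\alpha_j^n$ for $j\ge 3$). This is also precisely why $\vartheta$ and $r_0$ must depend on $\omega$, which in turn is consistent with the claim in the paper that a uniform H\"older exponent is impossible in the Salem regime.
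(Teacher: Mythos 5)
Your setup is in the right spirit: cocycle/twisted-Birkhoff-sum bounds, the trace identity in $\Q(\alpha)$, the observation that $\omega\alpha^n \bmod 1$ is governed up to $O(\alpha^{-n})$ by the bounded quasi-periodic trajectory from the unit-circle conjugates, and the correct intuition for why $\vartheta$ and $r_0$ cannot be uniform. But the proposal stalls exactly where the actual proof does all its work: you invoke ``a Weyl-type cancellation estimate producing the desired submaximal growth,'' and that is not the mechanism, nor is it clear that any such cancellation exists. The paper does not cancel oscillations; it works with the product bound of Proposition~\ref{prop:BuSo}, $|S_N^x(f,\omega)| \le C\|f\|_\infty N\prod_{n\le \log_\alpha N}\bigl(1 - c\max_v\|\omega|\zeta^n(v)|\|^2_{\R/\Z}\bigr)$, and the entire problem reduces to showing that a \emph{positive density} of indices $n$ satisfy $\|\omega A^n\mathbf{1}\|_{\R^d/\Gamma^*}\ge\delta(\omega)$ for some explicit $\delta$. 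In the Salem case this is genuinely delicate: writing $L\langle\omega A^n\mathbf{1},\mathbf{v}\rangle = \Tr(L\eta^{\bv}\alpha^n) + O(\alpha^{-n}) + \sum_j \mathfrak{H}_j\cos(2\pi n\theta_j+\phi_j)$, one needs (a) that $(n\theta_1,\ldots,n\theta_m)$ equidistributes on $(\R/\Z)^m$ (a special property of Salem numbers, Bugeaud \cite[Lemma~3.8]{Bug12}), and (b) the Marshall-Maldonado integral estimate (Theorem~\ref{Integral}): if the total amplitude $\sum_j\mathfrak{H}_j\ge\Delta$, then the time-average density of $n$'s with the phase in $J(\delta)$ is at least $1/2$. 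Neither ingredient appears in your proposal, and ``Weyl cancellation'' is not a substitute for them.

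There is a second genuine omission: you take for granted that the oscillatory amplitude is nonzero and bounded below, but this is precisely the degenerate regime one must rule out. If $\mathfrak{H}(\Gamma)=0$ (which in fact forces $\omega\in\Q$), the argument collapses and must be handled separately via periodicity of $\omega A^n\mathbf{1}$ together with weak mixing to exclude $\omega\mathbf{1}\in\Gamma^*$. For $\omega\notin\Q$, non-vanishing of $\mathfrak{H}(\mathbf{v}_i)$ requires a rigidity lemma (the paper cites Kwapisz \cite{Kwapisz}: if $\langle L\omega A^n\mathbf{1},\mathbf{v}_i\rangle\to 0 \bmod 1$ then $L\omega\mathbf{1}$ lies in $\Q^d + \Span_\R(\bbe_2)$, impossible by irreducibility). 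And to make the bound \emph{effective} in $\omega$ one needs a further case split according to the size of $\mathfrak{H}(\Gamma)$ relative to the lattice spacing, plus the effective separation Lemma~\ref{hochman} to bound $\mathfrak{H}_j$ from below by $ts^{n_1}$ when it doesn't vanish. Without these pieces, ``the Diophantine control transfers from the point to the whole ball'' is an assertion, not a proof, and the claimed exponent $\vartheta(\omega)$ has no concrete origin. The high-level plan is sound and parallels the paper's; the crux of the Salem case---equidistribution plus the integral estimate plus the non-degeneracy analysis---is what is missing.
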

	\begin{remark} {\em
			The dependence of H\"older exponent $\vartheta$ on $\omega$ can be made explicit, in terms of
			$\abs{\omega}$, $\abs{\sigma_0(\om)}$ and $L\geq 1$, where $L$ is the denominator of $\omega$, 
			written in minimal form:
			\[
			\omega = \dfrac{l_0+\dots+l_{d-1}\alpha^{d-1}}{L}, \quad \gcd(l_0,\dots,l_{d-1},L) = 1.
			\]
			By a consequence of an ``approximation theorem,'' see e.g., \cite[Exercise 1; Chapter 3, Section 1]{Neukirch},  it follows that there exists $\vartheta>0$ such that \eqref{hoelder_salem} holds simultaneously for a dense set of algebraic $\om$. On the other hand, the dependence of $r_0$ on $\omega$ is  not explicit, whereas in 
			Theorem \ref{th-main1} the radius $r_0$ does not depend on $\om$ at all. In the Salem case, the existence of $r_0$ is derived from the convergence of a certain series (see equation \eqref{limit}) for which we do not   have lower bounds for the speed of convergence. In the special case of Salem numbers  of degree 4 a lower bound for $r_0$ was obtained in \cite[Proposition 6.4]{Marshall20}.}
	\end{remark}
	
	It turns out that the H\"older exponent $\vartheta$ in \eqref{hoelder_salem} cannot be chosen uniformly over all algebraic $\om \in (0,1)$ and, as a consequence, Salem substitution systems do not have the property of quantitative weak mixing with a power rate. The next theorem is most conveniently stated in terms of the local dimension of spectral measures. Recall that for a positive finite measure $\nu$ on $\R$ the lower local dimension of $\nu$ at $\om\in \R$ is defined by
	$$
	\und{d}(\nu,\om) = \liminf_{r\to 0} \frac{\log \nu(B_r(\om))}{\log r}\,.
	$$
	In the next theorem when we write ``for almost every $f\in \Cyl(X_\zeta)$'' the meaning is that $f= \sum_{j=0}^{d-1} b_j \One_{[j]}$ for Lebesgue-a.e. $(b_0,\ldots,b_{d-1})$.
	
	\begin{theorem} \label{th:lower}
		Under the assumptions of Theorem~\ref{th:Salem}, we have for almost every $f\in \Cyl(X_\zeta)$:
		\begin{enumerate}
			\item[(i)] $$\inf\bigl\{\und{d}(\sig_f,\om):\ \om \in \Z[\alpha]\cap (0,1)\bigr\} = 0;$$ 
			\item[(ii)] For almost every $f\in \Cyl(X_\zeta)$ of mean zero the following holds: for any $\vartheta>0$ and any $C>0$ there exists a  sequence $N_i\to \infty$ such that 
			$$
			\frac{1}{N_i} \sum_{k=0}^{N_i-1} \bigl| \langle U^k f, f\rangle \bigr|^2 \ge C N_i^{-\vartheta}.
			$$
		\end{enumerate}
	\end{theorem}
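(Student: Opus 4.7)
The natural plan is to prove (i) first, via an explicit Salem-arithmetic construction of spectral parameters $\omega_i$ at which $\sigma_f$ concentrates badly, and then to deduce (ii) from (i) by a routine Fej\'er-kernel estimate.

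\medskip

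\noindent\textbf{Proof of (i) --- construction of bad $\omega_i$'s.}
In the spectral cocycle framework underlying Theorem~\ref{th:Salem}, to each $\omega \in \Z[\alpha]$ one associates a matrix product $\M_n(\omega)$, and the H\"older exponent $\vartheta(\omega)$ for $\sigma_f$ at $\omega$ is governed by the Lyapunov-type decay rate of $\M_n(\omega)$ applied to the coefficient vector of $f = \sum_j b_j\One_{[j]}$. The Salem hypothesis lets one analyze $\M_n(\omega)$ via Galois conjugation: the factors corresponding to the eigenvalues $\alpha,\alpha^{-1}$ of $\Sf$ drive the effective expansion/contraction, while the factors at the unit-circle conjugates $\alpha^{(j)}$ are \emph{isometric} and so cannot produce any decay on their own. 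The task --- and this is the paper's ``entirely new idea'' flagged in the introduction (Section~5) --- is to exhibit, for every $\vartheta > 0$, an $\omega_i \in \Z[\alpha]\cap(0,1)$ whose conjugate trajectory on the compact product of unit-circle eigencoordinates returns to a neutral configuration often enough that the effective decay of $\|\M_n(\omega_i)\|$ is no faster than $|\alpha|^{-\vartheta n}$ along a subsequence $n_k \to \infty$. I would expect this to be a Dirichlet/pigeonhole construction on that compact torus, combined with careful book-keeping to force the resulting algebraic integer into $(0,1)$. For Lebesgue-a.e.\ $(b_1,\ldots,b_d)$ the coefficient vector is generic with respect to the cocycle, so the slow-decay lower bound persists through the passage to $\sigma_f$, giving $\und{d}(\sigma_f,\omega_i) \le c(\vartheta)$ with $c(\vartheta)\to 0$. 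This establishes~(i).

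\medskip

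\noindent\textbf{(i)$\Rightarrow$(ii).}
This is standard harmonic analysis. The Fej\'er kernel $F_N\ge 0$ satisfies $F_N(t)\ge cN$ for $|t|\le 1/(2N)$, so for any $\omega_i\in[0,1)$
\[
\sum_{|k|\le N}\!\left(1-\tfrac{|k|}{N+1}\right)|\langle U^k f,f\rangle|^{2}
= \iint F_N(\omega-\omega')\,d\sigma_f(\omega)\,d\sigma_f(\omega')
\ge cN\,\sigma_f(B_{1/(4N)}(\omega_i))^{2}.
\]
For mean-zero $f$ the $k=0$ term vanishes, so the left side is bounded by $2\sum_{k=1}^{N-1}|\widehat{\sigma_f}(k)|^2$, giving $\frac{1}{N}\sum_{k=0}^{N-1}|\langle U^k f,f\rangle|^2 \ge c\,\sigma_f(B_{1/(4N)}(\omega_i))^{2}$. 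Since $\und{d}(\sigma_f,\omega_i)\to 0$ along the sequence from Step~1, for any prescribed $\vartheta>0$ and $C>0$ one finds $i$ and a scale $N = N_{i,k}$ at which $\sigma_f(B_{1/(4N)}(\omega_i)) \ge (1/(4N))^{\vartheta/2}$; plugging in yields $\frac{1}{N}\sum_{j=0}^{N-1}|\langle U^j f,f\rangle|^2 \ge C N^{-\vartheta}$, and diagonalizing produces the required sequence $N_i\to\infty$, proving~(ii).

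\medskip

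\noindent\textbf{Main obstacle.}
Essentially all of the substance lies in Step~1. Producing algebraic integers $\omega_i \in \Z[\alpha]\cap(0,1)$ whose trajectory under the compact conjugate dynamics is \emph{quantitatively} trapped near a neutral configuration is a Diophantine problem genuine to Salem numbers: it requires simultaneous control of expanding and compact coordinates of $\omega_i$ under the Galois embedding, while keeping $\omega_i$ itself in a prescribed interval and preserving enough cocycle non-degeneracy for the bound to descend from $\|\M_n(\omega_i)\|$ to $\sigma_f(B_r(\omega_i))$ for a.e.\ coefficient vector. The Fej\'er step (Step~2) and the generic-coefficient step are comparatively routine; everything hinges on finding the right compact-torus recurrence mechanism producing a sequence of algebraic points with H\"older exponent tending to $0$.
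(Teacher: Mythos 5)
Your overall outline --- construct ``bad'' parameters $\omega_i\in\Z[\alpha]\cap(0,1)$ at which the spectral measure decays slowly, and then pass to~(ii) via a Fej\'er--Last argument --- matches the paper's scheme, but the mechanism you sketch for Step~1 has a real gap. You propose finding $\omega_i$ whose conjugate orbit ``returns to a neutral configuration often enough'', along a subsequence $n_k\to\infty$, via a Dirichlet/pigeonhole construction. Recurrence of this kind is what pigeonholing on a compact torus produces, but it is not enough. What the proof actually uses is the far stronger, Salem-specific fact (Proposition~\ref{prop:SalemProp}, and its vector form Lemma~\ref{SalemPropMatrix}, proved by Minkowski's theorem applied to the image of $\Z[\alpha]$ as a full-rank lattice in $\R^d$): for every $\eps>0$ there is a nonzero $\eta\in\Z[\alpha]\cap(0,1)$ with $\norm{\eta A^n\mathbf{1}}_{\R^d/\Z^d}<\eps$ for \emph{every} $n\ge 0$. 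The uniformity in $n$ is essential because the lower bound $\|\mathscr{C}_\zeta(\eta\mathbf{1},n)\|\ge(\theta-\eps)^n$ is obtained by iterating the perturbation Lemma~\ref{lem:iter2} step by step: each cocycle factor $\mathscr{C}_\zeta\bigl(\eta(\Sf^{\sf T})^{n-1}\mathbf{1},1\bigr)$ must be $\eps$-close to $\Sf^{\sf T}$ so that the Perron--Frobenius component of the product grows at every step. If intermediate factors are uncontrolled --- which mere recurrence allows --- the accumulated PF growth can be destroyed and no bound on $\chi^+(\eta\mathbf{1})$, hence none on $\und{d}(\sig_f,\eta)$, follows. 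Relatedly, your description of the cocycle analysis (``factors at the unit-circle conjugates are isometric'', ``analyze $\M_n$ via Galois conjugation'') reflects the Galois-side decomposition used for the \emph{upper} bounds in Section~\ref{sec:Salem}, not the mechanism of Section~\ref{sec:lower}: there the cocycle product is never decomposed by Galois structure; one only uses continuity of $\xi\mapsto\mathscr{C}_\zeta(\xi,1)$ near $\mathbf{0}$ plus a norm-adapted perturbation argument about the primitive matrix $\Sf^{\sf T}$.

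On the remaining ingredients: the step from $\chi^+(\eta\mathbf{1})\ge\log(\theta-\eps)$ to $\und{d}(\sig_f,\eta)\le 2-2\log(\theta-\eps)/\log\theta$ is the Lyapunov--dimension formula $\und{d}(\sig_f,\om)=2-2\chi^+(\om\mathbf{1})/\log\theta$, valid for Lebesgue-a.e.\ choice of cylinder coefficients; this is imported from \cite{BuSo20} via the suspension-flow comparison in \cite{BerSol}, and is precisely what makes ``generic coefficient vector'' quantitative --- you gesture at it but do not identify it. Your Fej\'er step for~(ii) is the same idea as the cited Knill/Last theorem, though the claim that the $k=0$ term of the autocorrelation sum vanishes for mean-zero $f$ is false ($\langle f,f\rangle=\|f\|_2^2\neq 0$); the correct observation is that this term is $O(1)$, hence $O(1/N)$ after averaging and dominated by $N^{-\vartheta}$ for small $\vartheta$, so the argument survives with a trivial fix.
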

	
	\begin{remark} {\em
			An analogous result holds for self-similar Salem substitution flows ($\R$-actions), studied in \cite{Marshall20}. We do not know if weaker uniform (for example, log-H\"older) estimates hold in the Salem case.
		}
	\end{remark}

		\subsection{Comparison with the results on suspension flows.} In our previous work on local spectral estimates and quantitative weak mixing
		we were mainly focused on suspension flows over substitution $\Z$-actions, also known as {\em tiling flows}, see \cite{CS1} and
		references therein. Given a substitution $\zeta$, this flow is determined by a positive vector  
		$\vec{s} = (s_0,\dots,s_{d-1})^{\sf T} \in \R^d_+$, whose components are the
		{\em tile lengths}. It can also be viewed as a suspension flow over $(X_\zeta,T,\mu)$ with a piecewise-constant roof function 
		$\phi(x) = \sum_{j\in \Ak} s_j\One_{[j]}(x)$. 
		In \cite[Theorem 2.3]{CS1} Clark and
		Sadun show that $\om \in \R$ is in the point spectrum of the measure-preserving tiling flow corresponding $\vec s$ if and only if
		$$
		\om \bigl\langle  (\Sf^{\sf T})^n{\vec s},\ell(v)\bigr\rangle \to 0\ \mbox{\rm (mod 1)}\ \ \mbox{as}\ n\to \infty
		$$
		for all return words $v$. (Compare this with \eqref{eq:CS1}; note also that \cite{CS1} uses the term {\em recurrence word} for our return words.)
		Under some mild assumptions this implies that 
		if $\Sf$ has $\kappa\ge 2$ eigenvalues of modulus strictly greater than 1, then for a ``typical'' $\vec s$ the flow is weakly mixing (in fact, the set of
		exceptional $\vec s$, for which the flow has nontrivial point spectrum, has ``co-dimension'' at least $\kappa-1$).
		A natural question is whether this ``typical'' mixing can be upgraded
		to quantitative weak mixing.  In \cite{BuSo14,BuSo18b} it was shown that for {\em Lebesgue almost every} $\vec s\in \R^d_+$ the spectral measure of cylindrical functions of zero mean admits
		{\em uniform H\"older} bounds $\sig_f(B_r(\om)) \le \const\cdot r^\gam$, for some $\gam>0$, with the
		corresponding, H\"older rates of quantitative weak mixing. 
		The current paper deals with the case of $\vec s = {\bf 1}$, and we do not expect the log-H\"older bounds obtained here to be sharp. 
		
		It is a common phenomenon that results for a {\em specific} parameter are much harder than results for
		a {\em typical} parameter. (A case in point is Borel's result saying 
		that almost every number is normal, whereas few specific normal numbers are known.) This is also the scenario we deal with: although we expect 
		H\"older rates to hold for typical suspensions in the sense of Clark and Sadun, that is, for suspensions corresponding to the vector $\vec{s}$ outside a countable union of subspaces (see \cite[Theorem 2.7]{CS1}), we are only able to prove log-H\"older rates for fixed suspensions.
					
		In \cite[Theorem 5.1]{BuSo14} a result analogous to Theorem~\ref{th-main1}, that is, a log-H\"older bound, was obtained for the {\em self-similar} suspension flow, that is, when $\vec s$ is the Perron-Frobenius (PF) eigenvector of $\Sf^{\sf T}$.
		Note, however, that the conditions on the substitution are not the same. In \cite{BuSo14} it was
		required that the Perron-Frobenius eigenvalue $\theta$ of $\Sf$ has at least one Galois conjugate outside the unit circle.  
		For substitution $\Z$-actions, the presence of such a Galois conjugate is not sufficient for weak mixing, see \cite{SolAdic,FMN}.
		On the other hand, here we require irreducibility of the characteristic polynomial over $\Q$, which was not needed in \cite[Theorem 5.1]{BuSo14}.
		Our methods rely on the algebraicity of the vector ${\bf 1}$, see equation (\ref{eq31}). It would be interesting to extend our results to the more general suspensions $\vec s \in \R_{+}^d$.
		
	\begin{remark}{\em
	There is another application which highlights the differences with the previous work. The results on self-similar suspension flows from \cite{BuSo14,Marshall20} imply, in particular, regularity of spectral measures of the flows along the leaves of the stable and unstable  foliations of a pseudo-Anosov diffeomorphism (see \cite[Corollary]{BuSo14} and \cite[Section 4.1]{Bu}). Theorems \ref{th-main1}, \ref{th-main2}, \ref{th:Salem} and \ref{th:lower} instead imply analogous results for the corresponding class of self-similar interval exchange transformations (IET's), see, e.g. \cite{CG}, which is a zero measure set of the space of interval exchanges and therefore not covered by the general result from \cite{AFS}.
	
	(Incidentally, in \cite[Remark 1.8]{AFS} it is noted that the ideas of \cite[Section 7]{AFS} may be used to obtain logarithmic upper bounds for self-similar
	IET's of some special combinatorics; this is, in fact, similar to the case treated by Moll \cite{Moll23}.)
}
\end{remark}

	\subsection{Scheme of the proofs.} \label{sec:scheme}
	Local estimates for the spectral measure are deduced from growth estimates of  {\em twisted Birkhoff sums} using the following standard lemma; see e.g., \cite{Hof} and
	\cite[Lemma 3.1]{BuSo14} for a short proof.
	Let $(X,T,\mu)$ be a measure-preserving system. For $f\in L^2(X,\mu)$ let $\sig_f$ be the corresponding spectral measure on $\T = \R/\Z\cong [0,1)$.
	For $x\in X$ and $f\in L^2(X,\mu)$ let
	$$
	S_N^x(f,\om) = \sum_{n=0}^{N-1}e^{-2\pi i n\om} f(T^n x)\ \ \ \mbox{and}\ \ \ G_N(f,\om) = N^{-1} \int_X |S_N^x(f,\om)|^2\,d\mu(x).
	$$
	
	\begin{lemma} \label{lem-easy1}
		For all $\om\in[0,1)$ and $r\in (0,\half]$ we have
		\be \label{eq-estim1}
		\sig_f(B(\om,r))\le \frac{\pi^2}{4N} G_N(f,\om),\ \ \ \mbox{with}\ \ N = \lfloor (2r)^{-1}\rfloor.
		\ee
	\end{lemma}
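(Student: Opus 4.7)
The plan is to express $G_N(f,\omega)$ as an integral of the spectral measure $\sigma_f$ against a squared Dirichlet-type kernel, and then exploit a uniform pointwise lower bound on this kernel over a ball of radius at most $1/(2N)$.

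First I would expand $|S_N^x(f,\omega)|^2$ as a double sum and integrate term by term. Since $\mu$ is $T$-invariant, each cross term reduces to
\[
\int_X f(T^n x)\overline{f(T^m x)}\,d\mu(x) \;=\; \langle f\circ T^{n-m},\,f\rangle \;=\; \widehat{\sigma}_f(-(n-m)).
\]
Substituting the spectral representation $\widehat{\sigma}_f(-k) = \int_0^1 e^{2\pi i k\xi}\,d\sigma_f(\xi)$ and interchanging summation with integration, the double sum collapses into the modulus-squared of a geometric series in $\xi-\omega$, giving
\[
G_N(f,\omega) \;=\; \frac{1}{N}\int_0^1 \left(\frac{\sin(\pi N(\xi-\omega))}{\sin(\pi(\xi-\omega))}\right)^{\!2} d\sigma_f(\xi).
\]

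Next I would restrict the integration to $B(\omega,r)$ and bound the integrand from below. Since $N=\lfloor(2r)^{-1}\rfloor$ we have $r\le 1/(2N)$, so for every $\xi\in B(\omega,r)$ the quantity $\pi N|\xi-\omega|$ lies in $[0,\pi/2)$. The elementary inequalities $|\sin x|\ge (2/\pi)|x|$ for $|x|\le \pi/2$ applied to the numerator and $|\sin x|\le |x|$ applied to the denominator yield the pointwise bound
\[
\left(\frac{\sin(\pi N(\xi-\omega))}{\sin(\pi(\xi-\omega))}\right)^{\!2} \;\ge\; \frac{4N^2}{\pi^2} \qquad \text{on } B(\omega,r)
\]
(with the natural continuous extension at $\xi=\omega$). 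Combining this with the positivity of $\sigma_f$ and discarding the integral over the complement, one obtains $G_N(f,\omega)\ge (4N/\pi^2)\,\sigma_f(B(\omega,r))$, which rearranges to the claimed inequality.

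This is a routine piece of harmonic analysis on the circle, and no step poses a genuine obstacle. The only matter that requires care is the coordination between the choice $N=\lfloor(2r)^{-1}\rfloor$ and the constraint $\pi N|\xi-\omega|\le \pi/2$, which is what ensures the elementary sine bound on the numerator holds uniformly throughout $B(\omega,r)$.
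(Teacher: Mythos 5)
Your proof is correct and is the standard argument: express $G_N(f,\omega)$ as $\frac{1}{N}\int_0^1 \bigl(\sin(\pi N(\xi-\omega))/\sin(\pi(\xi-\omega))\bigr)^2\,d\sigma_f(\xi)$ via the spectral representation, then bound the Dirichlet kernel from below on $B(\omega,r)$ using $\sin x \ge (2/\pi)x$ on $[0,\pi/2]$ in the numerator and $\sin x \le x$ in the denominator, with the choice $N=\lfloor(2r)^{-1}\rfloor$ guaranteeing $\pi N|\xi-\omega|\le\pi/2$. The paper does not prove this lemma itself but cites Hof and \cite[Lemma 3.1]{BuSo14}; your argument is essentially the one given there, so you have faithfully reconstructed the intended proof.
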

	
	In the setting of Theorem~\ref{th-main1} we actually obtain estimates of twisted Birkhoff sums for the substitution system $(X_\zeta,T,\mu)$ that are uniform in $x\in X_\zeta$:
	$$
	|S_N^x(f,\om)|\le C_f(\om)\cdot N(\log N)^{-\gam},\ \ \om\ne 0,\ \ N\ge N_0(\om),
	$$
	and then the lemma yields
	\begin{equation}\label{estimate}
		\sig_f(B(\om,r)) \le  C'_f(\om) (\log(1/r))^{-2\gam}\ \ \om\ne 0,\ \ r\le r_0(\om).
	\end{equation}
	
	For $\om=0$  stronger, H\"older bounds hold, which follow from well-known estimates of the usual Birkhoff sums, essentially due to \cite{Adam}, and then \eqref{goal1} is
	obtained by ``gluing'' it with the one in \eqref{estimate}, which requires explicit control of $r_0(\om)$ and $C'_f(\om)$.
	
	Growth estimates for twisted Birkhoff sums are proved by a variant of the method applied in \cite[Section 5]{BuSo14} to self-similar flows. It also uses considerations of
	Diophantine nature, but is more geometric.

	In the setting of Theorem~\ref{th:Salem},
	given $\om\in (0,1)\cap\Q(\alpha)$, we show uniform in $x$
	bounds
	\be \label{bsum2}
	|S_N^x(f,\om)|\le \wt C_f(\om)\cdot N^{\wt\vartheta},\ \ \wt\vartheta\in (0,1),\ \ n\ge N_0(\om),
	\ee
	which imply, via Lemma~\ref{lem-easy1}, the bound
	$$
	\sig_f(B(\om,r)) \le  \wt C'_f(\om)\cdot r^{2(1-\wt\vartheta)}, \ \ r\le r_0(\om).
	$$
	The estimate \eqref{bsum2} is proven by a variant of the method used in \cite{Marshall20} for self-similar Salem type flows; however, the extension is not automatic.
	Additional effort is needed to make the dependence of $\wt\vartheta$ on $\om$  \textit{effective}: we say that a constant depending on some parameters is effective if this dependence can be made explicit.
	
	The proof of Theorem~\ref{th:lower}(i) uses several ingredients: a choice of $\om \in \Z[\alpha]$ with some specific Diophantine properties, a lemma on the lower
	bound of the norm of a matrix product, when every matrix is a small perturbation of a fixed primitive matrix, and a result from \cite{BuSo20}, relating the lower local dimension of
	the spectral measure to the pointwise upper Lyapunov exponent of the spectral cocycle. Part (ii) follows from part (i) immediately, by \cite[Theorem 3.7]{Knill},
	which is a general version of Last's theorem
	\cite[Theorem 3.1]{Last96}.


	\section{Approximation of toral automorphisms orbits by lattice points} \label{sec:approx}
	
	Let $A\in GL_d(\Z)$, $\om\in (0,1)$, ${\bf 1} = (1,\ldots,1)^{\sf T}$, and $L< \Z^d$ a lattice, such that $AL\subset L$. Consider 
	$$
	A^n \om{\bf 1} = \bp_n + \beps_n,\ \ n\ge 0.
	$$
	where $\bp_n\in L$ is the nearest lattice point to $A^n \om{\bf 1}$ in the norm ${\|\cdot\|}_\infty$. Below we will use $\ell^\infty$ metric and matrix norm, unless stated otherwise.
	Clearly, $\bp_n$ and $\beps_n$ depend on $\om$; we keep this in mind, but do not indicate it explicitly in our notation.
	
	Let $a_L>0$ be the minimal distance between distinct points in $L$ and let $b_L>0$ be such that the union of balls of radius $b_L$ with centers in $L$ covers $\R^d$. Thus we have
	$$
	{\|\beps_n\|} \le b_L,\ \ n\in \N.
	$$
	Observe that $b_L\ge \half$, with equality for $L = \Z^d$.
	Define
	$$
	\bz_0 := -\bp_0,\ \ \bz_n:= A\bp_{n-1} - \bp_{n},\ \ n\ge 1.
	$$
	Note that $A(\bp_{n-1} + \beps_{n-1}) = \bp_n + \beps_n$, hence
	\be \label{eq1}
	\bz_n= A\bp_{n-1} - \bp_{n} = \beps_{n} - A \beps_{n-1} \in L,\ \ n\ge 1.
	\ee
	Thus we immediately obtain:
	\begin{lemma} \label{lem1}
		{\rm (i)} We have
		$$
		\| \bz_n\|  \le (1 + \|A\|)\cdot b_L \ \ \mbox{for all}\ n\ge 0,
		$$
		hence there exists a finite set $F\subset L$ such that $\bz_n\in F$ for $n\ge 0$.
		
		\smallskip
		
		{\rm (ii)} If $$\max\{\|\beps_{n-1}\|, \|\beps_{n}\|\} < \frac{a_L}{4\|A\|}\,,$$ then $\bz_n = \bf{0}$ and $\beps_{n} = A\beps_{n-1}$.
	\end{lemma}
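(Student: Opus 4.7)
The plan is to read off both claims directly from the identity \eqref{eq1}, which presents $\bz_n$ as the lattice vector $\beps_n - A\beps_{n-1}$ built from the small error vectors.

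For part (i), I would apply the triangle inequality in the $\ell^\infty$ norm to \eqref{eq1}, together with the uniform a priori bound $\|\beps_n\|\le b_L$ coming from the nearest-lattice-point construction; this gives $\|\bz_n\|\le(1+\|A\|)b_L$ immediately for $n\ge 1$. The base case $n=0$ reduces to $\bz_0=-\bp_0$, with $\|\bp_0\|\le\|\om\mathbf{1}\|+b_L\le 1+b_L$, which is comfortably absorbed into the same constant since $\|A\|\ge 1$ for any $A\in GL_d(\Z)$ (each row of $A$ is a nonzero integer vector, so has $\ell^1$-norm at least $1$). Once every $\bz_n$ is confined to the intersection of the lattice $L$ with a fixed closed $\ell^\infty$-ball, finiteness of $F:=\{\bz_n: n\ge 0\}$ is automatic, because a lattice meets any bounded set in only finitely many points.

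For part (ii), I would again use \eqref{eq1}. Under the hypothesis, the same triangle inequality yields the strict bound
\[
\|\bz_n\|<\frac{a_L}{4\|A\|}+\|A\|\cdot\frac{a_L}{4\|A\|}=\frac{a_L}{4\|A\|}+\frac{a_L}{4}\le\frac{a_L}{2},
\]
where in the last step I used $\|A\|\ge 1$. Since $\bz_n\in L$ and $a_L$ is by definition the minimal distance between distinct points of $L$, the only lattice vector of norm strictly less than $a_L$ is the origin, forcing $\bz_n=\mathbf{0}$. Substituting back into \eqref{eq1} gives $\beps_n=A\beps_{n-1}$.

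I do not anticipate any substantive obstacle: the lemma is a short computational consequence of the setup. The only mildly delicate points are checking the $n=0$ case of (i) within the same universal constant, and observing that the factor $\tfrac{1}{4\|A\|}$ in the hypothesis of (ii) is precisely calibrated so that $\|\bz_n\|$ lands strictly below $a_L$ rather than merely being small. The conclusion $\beps_n=A\beps_{n-1}$ of part (ii) is what will eventually matter in the sequel, since it identifies the regime in which the error vectors genuinely evolve under the toral automorphism $A$.
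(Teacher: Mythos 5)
Your proof is correct and takes essentially the same route the paper intends — the lemma is stated immediately after \eqref{eq1} with the words ``Thus we immediately obtain,'' and the triangle inequality on $\bz_n=\beps_n-A\beps_{n-1}$ together with $\|\beps_n\|\le b_L$ is exactly what is meant. Your handling of part (ii) is clean and correct: the factor $\tfrac{1}{4\|A\|}$ is calibrated so that $\|\bz_n\|\le a_L/2<a_L$, and since $a_L$ is the minimal distance between distinct lattice points this forces $\bz_n=\mathbf{0}$.

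One small quibble on the $n=0$ case of (i): you assert that $\|\bp_0\|\le 1+b_L$ is ``comfortably absorbed'' into $(1+\|A\|)b_L$ using $\|A\|\ge 1$, but $1+b_L\le(1+\|A\|)b_L$ is equivalent to $\|A\|b_L\ge 1$, which need not hold when $\|A\|$ is close to $1$ and $b_L$ is close to $\tfrac12$. The paper's own remark on $n=0$ (``$\|\bz_0\|=|\om|<1<2b_L$'') is also loose — $\bz_0=-\bp_0$ need not have norm $|\om|$, and $1<2b_L$ is not forced since $b_L\ge\tfrac12$ only. None of this matters for the actual conclusion: $\bz_0$ is a single fixed lattice vector, so the finiteness of $F:=\{\bz_n : n\ge 0\}$ is unaffected, and in the sequel only the existence of the finite set $F$ (not the precise constant) is used. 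Still, if you want the displayed inequality to hold verbatim at $n=0$, you should either enlarge the constant to, say, $\max\{(1+\|A\|)b_L,\,1+b_L\}$, or simply treat $n=0$ separately and conclude finiteness directly.
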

	
	In part (i) the inequality holds for $n=0$ since $\|\bz_0\| = |\om| < 1< 2b_L$.
	
	\medskip
	
	Now let us assume that $A$ has an irreducible characteristic polynomial  over $\Q$; denote it by $p(x)$. Then the eigenvalues of $A$ are all simple; they are
	algebraic integers conjugate to each other. 
	We also assume that $A$ is primitive, so that
	there is a dominant PF eigenvalue $\theta_1>1$. Enumerate the eigenvalues by magnitude:
	$$
	\theta_1 > |\theta_2|\ge\ldots 
	$$
	We assume that there are $\kappa\ge 2$ eigenvalues strictly greater than 1 in absolute value. 
	We can choose a basis
	$\{\bbe^*_j\}_{j\le d}$ consisting of eigenvectors for the transpose matrix $A^{\sf T}$, in such a way that 
	$\bbe_j^*$ have their components in the ring $\Z[\ov\theta_j]$; that is,
	$\bbe^*_j \in (\Z[\ov\theta_j])^d$. In fact,
	fixing  the 1st component of the PF eigenvector for $A^{\sf T}$ to be equal to 1, we can find the other components by Cramer's rule. Then
	the other components are going to be elements of $\Q(\theta_1)$. Multiplying this eigenvector by a scalar, we can get rid of the denominators and 
	obtain $\bbe^*_1$ with components in $\Z[\theta_1]$. The Galois group of an irreducible integer polynomial is transitive (see \cite[Section 14.6]{Dummit}) ; let $\tau_j$ be an automorphism of the splitting field of $p(x)$ mapping
	$\theta_1$ to $\theta_j$. Then $\ov\tau_j(\theta_1) = \ov\theta_j$ and we can define
	$$
	 \bbe^*_j:=\ov\tau_j (\bbe^*_1).
	$$
to obtain the desired basis.
	
	Thus, for any $\bz \in \Z^d$ there exists a polynomial $\Pk_\bz \in \Z[x]$ of degree $\le d-1$, such that
	\be \label{def:Pz}
	\langle \bz, \bbe^*_j \rangle = \Pk_\bz(\theta_j),\ \ j\le d.
	\ee
	Notice that $\Pk_\bz$ does not depend on $j$, since
	\be \label{eq:ind1}
	\tau_j(\langle \bz,\bbe_1^*\rangle) = \langle \bz,\bbe^*_j\rangle.
	\ee

	We can write for $n\ge 1$:
	\begin{eqnarray*}
		\beps_{n} = \bz_n + A \beps_{n-1} & = & \bz_n + A(\bz_{n-1} + A\beps_{n-2})\\
		& = & \bz_n + A\bz_{n-1} + A^2\beps_{n-2} = \ldots \\
		& = & \bz_n + A\bz_{n-1} + \cdots + A^{n-1}\bz_{1} + A^{n} \beps_0.
	\end{eqnarray*}
	Denote $b_n^{(j)}:= \langle \bz_n, \bbe^*_j\rangle$ for $n\ge 0$; then we obtain for all $j\le d$:
	\be \label{eq2}
	\langle \beps_{n}, \bbe_j^*\rangle  = b_n^{(j)} +  \theta_j b_{n-1}^{(j)} + \cdots +  \theta_j^{n-1} b_1^{(j)} + \theta_j^{n} \langle\beps_0,\bbe^*_j\rangle.
	\ee
	Similarly, for $n,k\in \N$ and $j\le d$ we have
	$$
	\langle \beps_{n+k}, \bbe_j^*\rangle = b_{n+k}^{(j)} +  \theta_j b_{n+k-1}^{(j)} + \cdots +  \theta_j^{k-1} b_{n+1}^{(j)} + \theta_j^{k}\langle \beps_{n}, \bbe_j^*\rangle.
	$$
	Observe that
	$$
	|\langle \beps_{n+k}, \bbe_j^*\rangle| \le {\|\bbe_j^*\|}_1\cdot b_L,
	$$
	hence we obtain 
	
	\begin{lemma} \label{lem2}
		At least one of the following possibilities hold:
		\begin{enumerate}
		\item[(i)]
		 not all of $\bz_{n+1},\ldots,\bz_{n+k}$ are equal to zero;
\item[(ii)]
		$$
		|\langle \beps_{n}, \bbe_j^*\rangle| \le |\theta_j|^{-k} {\|\bbe_j^*\|}_1\cdot b_L.
		$$
		\end{enumerate}
	\end{lemma}
	
	\medskip
	
	Since $\beps_0 = \om{\bf 1} - \bp_0=\om{\bf 1} + \bz_0$, equation \eqref{eq2} implies for all $j\le d$:
	\be \label{eq3}
	\om \langle {\bf 1}, \bbe_j^*\rangle = -b_0^{(j)} - \frac{b_1^{(j)}}{\theta_j} - \frac{b_2^{(j)}}{\theta_j^2} - \ldots - \frac{b_n^{(j)}}{\theta_j^{n}} + 
	\frac{\langle \beps_{n}, \bbe_j^*\rangle}{\theta_j^{n}} =: Q_n(1/\theta_j) + \frac{\langle \beps_{n}, \bbe_j^*\rangle}{\theta_j^{n}},
	\ee
	for some polynomial $Q_n \in \Z[X]$. 
	Let us define $Q_n$ precisely. Observe that $\Z[\theta_1]\subseteq \Z[1/\theta_1]$, since $\theta_1$ is an algebraic integer. It follows that for any polynomial
	$p\in \Z[X]$ of degree $\le d-1$ there exists a polynomial $\wt p\in \Z[X]$ of degree $\le d-1$ , such that $\wt p(1/\theta_1) = p(\theta_1)$, and then
	$$
	\wt p(1/\theta_j) = p(\theta_j),\ \ j\le d.
	$$
 We apply this operation to $\Pk_\bz$ for $\bz\in \Z^d$ to obtain a polynomial $\wt \Pk_\bz$ and define
	$$
	Q_n(x) = \wt \Pk_{\bz_0}(x) + \wt \Pk_{\bz_1}(x) x + \cdots + \wt \Pk_{\bz_n}(x) x^n.
	$$
	 With this definition equation \eqref{eq3} holds for $j\le d$.
	Observe that $\deg(Q_n)\le n+d-1$, and since $\bz_j\in F$ for all $j\in \N$, it is easy to see that all the coefficients of $Q_n$ belong to a finite set, independent of $n$.

	Note that $\langle {\bf 1}, \bbe_j^*\rangle\ne 0$, since the matrix is irreducible over $\Q$ and hence components of eigenvectors are rationally independent. Thus, for all $j\le d$,
	\be \label{omega}
	\om = \frac{Q_n(1/\theta_j)}{\langle {\bf 1}, \bbe_j^*\rangle} + \frac{\langle \beps_{n}, \bbe_j^*\rangle}{\theta_j^{n}\langle {\bf 1}, \bbe_j^*\rangle} =: P_n(1/\theta_j) + \delta_j^{(n)},
	\ee
	for some polynomial $P_n\in \Q[X]$, and
	\be \label{delta}
	\delta_j^{(n)} = \frac{\langle \beps_{n}, \bbe_j^*\rangle}{\theta_j^{n}\langle {\bf 1}, \bbe_j^*\rangle}. 
	\ee
	In fact, $\langle {\bf 1}, \bbe_j^*\rangle  \in \Z[\theta_j]$, hence $1/\langle {\bf 1}, \bbe_j^*\rangle = \Qk(1/\theta_j)$ for some polynomial $\Qk\in \Q[X]$, since non-zero elements of $\Q[\theta_j]$ are invertible.
	The polynomial $P_n$ is obtained as a product of $Q_n\in \Z[x]$ and  a fixed rational polynomial $\Qk$, hence there exists $q\in \N$, depending only on $A$,
	such that
	\be \label{def:q}
	qP_n \in \Z[X],\ \ n \in \N,
	\ee
	that is, the coefficients of $P_n$ are in $q^{-1}\Z$.
	Moreover, the polynomials $P_n$ have coefficients from a bounded set 
	independent of $n$, and $\deg(P_n) \le n+2(d-1)$. 
	
	Until now, we worked with an arbitrary $j\le d$, but from now on we restrict ourselves to $j\le \kappa$, so that $\delta_j^{(n)} \to 0$ as $n\to \infty$.
	Equation \eqref{omega} implies  that for $2\le j \le \kappa$,
	\be \label{eq31}
	P_n(1/\theta_j) - P_n(1/\theta_1) = \delta_1^{(n)} - \delta_j^{(n)}.
	\ee
	The following is a standard fact; for a proof see \cite[Lemma 5.10]{Hochman_14}.
	
	\begin{lemma}\label{hochman}
		Let $E$ be a finite set of algebraic numbers over $\Q$. Then there exists $s\in (0,1)$ such that for any polynomial expression $x$ of degree $n$ in the elements of $E$, either $x=0$ or
		$|x| \ge s^n$.
	\end{lemma}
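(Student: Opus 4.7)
The plan is to multiply by a suitable power of a common denominator in order to reduce $x$ to a non-zero algebraic integer in the number field $K:=\Q(E)$, and then apply the elementary Liouville-type bound that compares such an integer with the product of its Galois conjugates. I read ``polynomial expression of degree $n$ in the elements of $E$'' as $P(e_1,\ldots,e_r)$ with $e_1,\ldots,e_r\in E$ and $P\in\Z[x_1,\ldots,x_r]$ of total degree at most $n$, whose integer coefficients have absolute value at most some constant $B$ independent of $n$; this is the regime in which the lemma is invoked in the paper, since the $P_n$ constructed above have coefficients drawn from a bounded set, cf.\ \eqref{def:q}.

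For the setup, let $D:=[K:\Q]$ and let $\sigma_1=\mathrm{id},\sigma_2,\ldots,\sigma_D$ enumerate the embeddings $K\hookrightarrow\C$. Because $E$ is finite, I can choose $d\in\N$ with $de\in\mathcal{O}_K$ for every $e\in E$, and a constant $M\ge 1$ with $|\sigma_i(e)|\le M$ for every $i\le D$ and every $e\in E$. Setting $y:=d^n x$, each monomial of $P$ of degree $\le n$, after multiplication by $d^n$, becomes a product of elements of $\mathcal{O}_K$, so $y\in\mathcal{O}_K$. Since the number of monomials of total degree $\le n$ in $r$ variables grows only polynomially in $n$, and each such monomial has modulus at most $M^n$ under any $\sigma_i$, there is a constant $C>0$ depending on $E$, $d$, $B$ but not on $n$, such that
$$
|\sigma_i(y)|\le (CM)^n, \qquad i=1,\ldots,D.
$$

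For the lower bound, assume $x\ne 0$, so that $y$ is a non-zero algebraic integer. Its norm $N_{K/\Q}(y)=\prod_{i=1}^D\sigma_i(y)$ is then a non-zero rational integer, hence $|N_{K/\Q}(y)|\ge 1$. Isolating the identity embedding yields $|y|\ge\prod_{i=2}^D|\sigma_i(y)|^{-1}\ge (CM)^{-(D-1)n}$, and therefore
$$
|x|=d^{-n}|y|\ge \bigl(d(CM)^{D-1}\bigr)^{-n}=:s^n,
$$
with $s\in(0,1)$. The main and really only subtle point is the uniform coefficient bound $B$: without such a hypothesis the lemma fails outright, since arbitrarily large coefficients allow $|x|$ to be made arbitrarily small at any fixed $n$. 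In the intended application this uniformity is built into the construction of the relevant polynomials, so the hypotheses are satisfied and the two-line norm argument above completes the proof.
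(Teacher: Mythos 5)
Your argument is correct and is essentially the proof of the cited result, Hochman's Lemma 5.10 (the paper gives no proof of its own, only this reference): clear denominators to produce a nonzero algebraic integer $y=d^nx$, bound all Galois conjugates of $y$ exponentially in $n$, and use that $|N_{K/\Q}(y)|\ge 1$ to get the Liouville-type lower bound. Your observation that a uniform bound on the integer coefficients is indispensable, and is supplied in the application by the boundedness of the coefficients of $Q_n$ and $P_n$, correctly matches the implicit hypothesis here and the explicit one in Hochman's statement.
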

	
	\begin{remark}\label{rem:expicit} {\em 
			The constant $s$ can be made explicit: any integer polynomial expression on a finite set of algebraic numbers might be turned into an integer polynomial evaluated at a primitive element of the extension field generated by the algebraic numbers, and a primitive element can always be computed (see for example, \cite{Yokoyama}). In summary, in our case the constant will depend only on the substitution.}
	\end{remark}

	Thus we obtain from \eqref{eq31} that 
	\be \label{fact1}
	\mbox{for all $j=2,\ldots,\kappa$, \ either}\ \ \delta_1^{(n)} = \delta_j^{(n)}, \ \mbox{or}\  \ |\delta_1^{(n)} - \delta_j^{(n)}|\ge s^n, 
	\ee
	where $s$ depends only on the matrix $A$. 
	Fix $K\in \N$ (which depends only on $A$) such that
	\be \label{ineq0}
	|\theta_j|^{-(K+1)n + 2} < \frac{s^n}{{\|\bbe_j^*\|}_1\cdot b_L}\ \ \mbox{for all}\ \ j\le \kappa\ \ \mbox{and for all}\ \ n\ge 1.
	\ee

	\begin{lemma} \label{lem-main}
		For any $n\in \N$,  at least one of the possibilities hold:
		\begin{enumerate}
		\item[(i)] $\delta_1^{(n)} = \delta_j^{(n)}$ for all $j\le \kappa$;
		\item[(ii)]
		$
		\max\{\|\bz_{n+1}\|,\ldots,\|\bz_{n+Kn-2}\|\} >0.
		$
		\end{enumerate}
		
	\end{lemma}
	
	\begin{proof}
		Suppose that there exists $1<j\le \kappa$ such that 
		$\delta_1^{(n)} \ne \delta_j^{(n)}$. Then $\max\{|\delta_1^{(n)}|, |\delta_j^{(n)}|\}\ge  s^n$ by \eqref{fact1}. In view of \eqref{delta} and \eqref{ineq0}, we obtain 
		$$
		|\langle \beps_{n}, \bbe_j^*\rangle| \ge  s^n |\theta_j|^{n} > |\theta_j|^{-Kn+2}\cdot  {\|\bbe_j^*\|}_1\cdot b_L.
		$$
		Then Lemma~\ref{lem2} implies that one of the numbers $\bz_{n+1},\ldots,\bz_{n+Kn-2}$ is non-zero.
	\end{proof}

	
	\section{Proof of Theorem~\ref{th-main1}} \label{ProofSec}
	
	\begin{proof}
		It is known since \cite{Host} that if $|\theta_2|<1$ (the ``Pisot case''), then the discrete component of the spectrum is non-trivial, so we can assume that $\kappa\ge 2$.
		
		Consider the Abelian group $\Gam$ (a subgroup of $\Z^d)$ generated by the population vectors $\ell(v)$, where $v$ are return words for $X_\zeta$.
		We have $\Sf\Gam\subset \Gam$, where $\Sf = \Sf_\zeta$ is the substitution matrix,
		hence the real span of $\Gam$ is a rational $\Sf$-invariant subspace. Since $\Sf$ is irreducible, this must be all of $\R^d$.
		It follows that $\Gam$ has full rank, hence it is a lattice.

		Consider the {\em twisted Birkhoff sum}
		$$
		S_N^x(f,\om) = \sum_{n=0}^{N-1} e^{-2\pi i n \om} f(T^n x),
		$$
		where $T$ is the left shift on $X_\zeta$ and $x\in X_\zeta$. Recall that $v$ is called a {\em good return word} for the substitution $\zeta$ if $v$ starts with some letter $c\in \Ak$ and
		$vc$ occurs as a subword in $\zeta(b)$ for every $b\in \Ak$. The set of good return words is obviously finite; denote it by $GR(\zeta)$.
		Observe that the set $GR(\zeta)$ is nonempty once one passes to a sufficiently large power substitution, which by the primitivity assumption, does not change the space $X_\zeta$.
		
		The following is an immediate corollary of \cite[Proposition 3.5]{BuSo14}. Recall that $\theta = \theta_1$.
		
		\begin{prop}[{\cite{BuSo14}}] \label{prop:BuSo}
			Let $\zeta$ be a primitive aperiodic substitution on $\Ak$. There exist $C_2,c_3,C_4>0$ depending only on $\zeta$ such that for any cylindrical function $f:X_\zeta\to \C$ of level zero,
			any $x\in X_\zeta$, any $\om\in (0,1)$, and any $N\in \N$, holds
			$$
			\bigl|S_N^x(f,\om)\bigr| \le C_4 {\|f\|}_\infty\cdot N\cdot \!\!\!\!\!\prod_{n=0}^{\lfloor \log_{\theta} N - C_2\rfloor} \bigl(1 - c_3\cdot \max_{v\in GR(\zeta)} {\bigl\|\om|\zeta^n(v)|\bigr\|}_{\R/\Z}^2\bigr).
			$$
		\end{prop}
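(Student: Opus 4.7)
The plan is to prove this proposition by a recursive hierarchical analysis of the twisted Birkhoff sum, exploiting the nested structure created by iterating the substitution on a good return word. The product over scales reflects a decay factor extracted at each level of the hierarchy.

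\textbf{Step 1: Hierarchical block decomposition.} For a primitive aperiodic substitution, Moss\'e's recognizability theorem supplies, for every $n\ge 0$, a canonical partition of any $x\in X_\zeta$ into blocks of the form $\zeta^n(a)$, $a\in \Ak$, with at most bounded truncations at the two endpoints. The block length at scale $n$ is comparable to $\theta^n$; this is why the useful range of scales for an orbit of length $N$ is $0\le n\le \lfloor \log_\theta N - C_2\rfloor$, with the constant $C_2$ absorbing the multiplicative discrepancy between the population vector of a return word and $\theta^n$.

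\textbf{Step 2: Exploiting a good return word.} Fix $v\in GR(\zeta)$, so $v$ starts with some letter $c$ and $vc$ occurs in $\zeta(b)$ for every $b\in\Ak$. Apply $\zeta^n$: then $\zeta^n(vc) = \zeta^n(v)\zeta^n(c)$ occurs inside every block $\zeta^{n+1}(b)$. Iterating this observation at consecutive levels, one shows that inside each level-$(n+1)$ block one finds a uniformly positive number $k\ge 2$ of consecutive copies of $\zeta^n(v)$ (this is where both the primitivity and the precise ``good return word'' property are used, to ensure the same short pattern appears inside every $\zeta^{n+1}(b)$). The key point is that ``consecutive'' here means translates of each other by $|\zeta^n(v)|$.

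\textbf{Step 3: One-scale decay estimate.} On any stretch consisting of $k$ consecutive copies of $\zeta^n(v)$, the twisted sum factors as
\[
\sum_{j=0}^{k-1} e^{-2\pi i j \om|\zeta^n(v)|}\cdot S_{|\zeta^n(v)|}^{T^{j|\zeta^n(v)|}x}(f,\om),
\]
and the inner sum is the same for each $j$ because the underlying local word is identical. The geometric prefactor satisfies the elementary bound
\[
\Bigl|\tfrac{1}{k}\sum_{j=0}^{k-1} e^{-2\pi i j\alpha}\Bigr| \le 1 - c_3 \|\alpha\|_{\R/\Z}^2
\]
for $\alpha = \om|\zeta^n(v)|$, where $c_3>0$ depends only on $k$ (hence only on $\zeta$). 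This extracts the single factor $(1-c_3\|\om|\zeta^n(v)|\|_{\R/\Z}^2)$ associated with the return word $v$ at level $n$. Taking the best $v$ gives the maximum inside the product.

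\textbf{Step 4: Nesting and main obstacle.} Apply Step 3 simultaneously at every scale $0\le n\le \lfloor \log_\theta N - C_2\rfloor$. The hard part is to show that the decay factors at different scales multiply rather than merely add: one must exhibit a coherent nested decomposition in which, inside each level-$(n+1)$ aggregated group, one truly sees several consecutive aligned copies at level $n$. This is achieved by telescoping through iterated $\zeta$-images of the chosen good return word and observing that each nested level inherits the aligned-block structure from the level above. The boundary/leftover pieces produced at each scale have total length bounded by $O(\theta^n)$ and contribute only an additive error, which sums geometrically and is absorbed into $C_4\|f\|_\infty N$. Putting these together yields the stated product bound.
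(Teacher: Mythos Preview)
The paper does not prove this proposition; it is quoted as an immediate corollary of \cite[Proposition~3.5]{BuSo14}. Your outline follows the same hierarchical-decomposition strategy as in \cite{BuSo14}, and Steps~1 and~4 are fine. The gap is in Steps~2--3, where you misidentify the repeated block.

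From the good return word property you only know that $vc$ occurs in $\zeta(b)$ for every $b$. Writing $v=cu$, this gives the pattern $c\,u\,c$ inside $\zeta(b)$, hence $\zeta^n(c)\,\zeta^n(u)\,\zeta^n(c)$ inside $\zeta^{n+1}(b)$. So what you obtain is \emph{two copies of $\zeta^n(c)$}, separated by exactly $|\zeta^n(v)|$ positions --- not $k\ge 2$ consecutive copies of $\zeta^n(v)$. In particular, the ``inner sum is the same for each $j$'' claim in Step~3 fails as written: the two length-$|\zeta^n(v)|$ windows starting at the two occurrences of $c$ are $v=cu$ and $c\,s_b$ respectively, and there is no reason for these to coincide.

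The fix is to run the cancellation on the two copies of $\zeta^n(c)$ instead. Writing $\Phi_a^{(n)}(\om)$ for the twisted sum over $\zeta^n(a)$, the contribution of the two $c$'s to $\Phi_b^{(n+1)}(\om)$ is
\[
e^{-2\pi i \om\,|\zeta^n(p_b)|}\bigl(1+e^{-2\pi i \om\,|\zeta^n(v)|}\bigr)\Phi_c^{(n)}(\om),
\]
while the remaining $|\zeta(b)|-2$ letters contribute at most $(|\zeta(b)|-2)\max_a |\Phi_a^{(n)}|$. Using $|1+e^{-2\pi i\alpha}|\le 2\bigl(1-c\,\|\alpha\|_{\R/\Z}^2\bigr)$ with $\alpha=\om|\zeta^n(v)|$ gives
\[
\max_b |\Phi_b^{(n+1)}(\om)| \le |\zeta|_{\max}\Bigl(1-c_3\,\bigl\|\om|\zeta^n(v)|\bigr\|_{\R/\Z}^2\Bigr)\max_a |\Phi_a^{(n)}(\om)|,
\]
which iterates to the desired product. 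This is precisely the mechanism in \cite{BuSo14}; once Steps~2--3 are corrected in this way, your Step~4 (geometric control of boundary pieces via recognizability) completes the argument.
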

		Here $\|x\|_{\R/\Z}$ denotes the distance from $x\in \R$ to the nearest integer.
		Passing to a power of $\zeta$, we can make sure that all return words of less than certain length are good, and then choose 
		$v_1,\ldots,v_k\in GR(\zeta)$ such that $\Z[\ell(v_1),\ldots,\ell(v_k)] =\Gam$. Note, however, that $\{\ell(v_1),\ldots,\ell(v_k)\}$ is not necessarily a basis for the lattice; we may have
		$k>d$.
		Observe that
		$$
		|\zeta^n(v)| = \langle {\bf 1}, \Sf^n \ell(v) \rangle = \langle (\Sf^{\sf T})^n {\bf 1},\ell(v)\rangle,
		$$
		and an elementary lemma implies that
		\be \label{eq-elem1}
		\max_{j \le k} {\bigl\|\om|\zeta^n(v_j)|\bigr\|}_{\R/\Z} = \max_{j \le k} \bigl|\langle   (\Sf^{\sf T})^n \om{\bf 1},\ell(v_j)\rangle\bigr| \asymp {\bigl\|(\Sf^{\sf T})^n \om{\bf 1}\bigr\|}_{\R^d/\Gam^*},
		\ee
		where 
		$$
		\Gam^*:= \{\bv\in \R^d:\ \langle \bv,\bz\rangle\in \Z\ \ \mbox{for all}\ \bz\in \Gam\}
		$$
		is the dual lattice for $\Gam$. 
		The constants implied in $\asymp$ may depend on $\zeta$ and on $v_j$, but not on $\om$ and $n$.
		\begin{lemma} \label{lem:lattice}
			Let $\Gam = \Z[\bx_1,\ldots,\bx_k]$ be a full-rank lattice in $\R^d$, with $k\ge d$, and let $\Gam^*$ be the dual lattice. Then
			$$
			\|\bu\|_{\R^d/\Gam^*} \asymp \max_{j\le k} \big\|\langle \bu, \bx_j\rangle\bigr\|_{\R/\Z},
			$$
			where the implied constants are allowed to depend on $\bx_j$ (and not just on $\Gam$).
		\end{lemma}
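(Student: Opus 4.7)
The plan is to establish both inequalities by picking a $\Z$-basis of $\Gamma$, transferring the problem to the standard lattice $\Z^d$, and passing between the generating set $\{\bx_j\}$ and the basis by $\Z$-linear change of coordinates.

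\textbf{Easy direction.} First I would show $\max_j {\|\langle \bu,\bx_j\rangle\|}_{\R/\Z} \lesssim {\|\bu\|}_{\R^d/\Gam^*}$. Pick $\bv\in \Gam^*$ minimizing ${\|\bu-\bv\|}_\infty$. By definition of $\Gam^*$, $\langle \bv,\bx_j\rangle\in\Z$ for each $j$, hence
\[
{\|\langle \bu,\bx_j\rangle\|}_{\R/\Z} \le |\langle \bu-\bv,\bx_j\rangle| \le {\|\bx_j\|}_1\cdot {\|\bu-\bv\|}_\infty = {\|\bx_j\|}_1\cdot {\|\bu\|}_{\R^d/\Gam^*},
\]
and taking the max over $j$ gives the bound with constant $\max_j {\|\bx_j\|}_1$.

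\textbf{Hard direction.} Since $\Gam$ has full rank, it has a $\Z$-basis $\bv_1,\dots,\bv_d$, and each $\bv_i$ is a $\Z$-linear combination of the generators $\bx_1,\dots,\bx_k$: $\bv_i=\sum_j a_{ij}\bx_j$ with $a_{ij}\in\Z$. Hence
\[
{\|\langle \bu,\bv_i\rangle\|}_{\R/\Z} \le \sum_j |a_{ij}|\,{\|\langle \bu,\bx_j\rangle\|}_{\R/\Z},
\]
so $\max_i {\|\langle \bu,\bv_i\rangle\|}_{\R/\Z}\lesssim \max_j{\|\langle \bu,\bx_j\rangle\|}_{\R/\Z}$ with an implied constant depending only on the $\bv_i$ and $\bx_j$. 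It therefore suffices to prove ${\|\bu\|}_{\R^d/\Gam^*}\lesssim \max_i{\|\langle \bu,\bv_i\rangle\|}_{\R/\Z}$.

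For this, let $M$ be the matrix with columns $\bv_1,\dots,\bv_d$; then $\Gam=M\Z^d$ and $\Gam^*=(M^{\sf T})^{-1}\Z^d$. Setting $\by=M^{\sf T}\bu$, so that $y_i=\langle \bu,\bv_i\rangle$, we have for any $\bn\in\Z^d$,
\[
{\|\bu-(M^{\sf T})^{-1}\bn\|}_\infty \le {\|(M^{\sf T})^{-1}\|}_{\infty\to\infty}\cdot{\|\by-\bn\|}_\infty.
\]
Infimizing over $\bn\in\Z^d$ yields
\[
{\|\bu\|}_{\R^d/\Gam^*} \le {\|(M^{\sf T})^{-1}\|}_{\infty\to\infty}\cdot \max_i{\|y_i\|}_{\R/\Z} = {\|(M^{\sf T})^{-1}\|}_{\infty\to\infty}\cdot \max_i{\|\langle \bu,\bv_i\rangle\|}_{\R/\Z},
\]
which combined with the previous step gives the desired bound.

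There is no serious obstacle: the only mild subtlety is that the generating set $\{\bx_j\}$ need not be a basis (since $k\ge d$), so one must first descend to an actual $\Z$-basis of $\Gam$ to match up $\bu\mapsto M^{\sf T}\bu$ with the lattice quotient $\R^d/\Gam^*$. Once this is done, both inequalities reduce to elementary norm estimates, and the implied constants depend only on the chosen $\bv_i$ and on the integer matrix $(a_{ij})$ expressing $\bv_i$ in terms of $\bx_j$, hence ultimately only on the generators $\bx_1,\dots,\bx_k$.
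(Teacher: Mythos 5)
Your easy direction matches the paper's. The hard direction, however, takes a genuinely different route — and, in fact, the more careful one. The paper's argument selects a linearly independent subset of the generators, say $\{\bx_1,\ldots,\bx_d\}$, forms $\Lambda:=\Z[\bx_1,\ldots,\bx_d]\subseteq\Gam$, constructs a point $\bv$ in the dual basis of $\{\bx_j\}_{j\le d}$, observes $\bv\in\Lambda^*$, and then asserts $\|\bu\|_{\R^d/\Gam^*}\le\|\bu-\bv\|$. But since $\Lambda\subseteq\Gam$ gives $\Gam^*\subseteq\Lambda^*$, the point $\bv$ lies only in the \emph{larger} lattice $\Lambda^*$ and need not belong to $\Gam^*$, so that inequality is not justified as written. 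For instance, with $d=2$, $\bx_1=(2,0)$, $\bx_2=(0,2)$, $\bx_3=(1,1)$, one has $\Gam=\{(a,b)\in\Z^2:a+b\text{ even}\}$ and $\Gam^*=\{(a/2,b/2):a+b\text{ even}\}$; for $\bu=(1/2,0)$ the construction from $\bx_1,\bx_2$ yields $\bv=\bu$, hence $\|\bu-\bv\|=0$, whereas $\|\bu\|_{\R^2/\Gam^*}=1/2$ — the generator $\bx_3$, which detects that $\bu$ is far from $\Gam^*$, never enters the paper's estimate. Your argument avoids this by passing to a genuine $\Z$-basis $\bv_1,\ldots,\bv_d$ of $\Gam$, expressing each $\bv_i$ as a $\Z$-combination of \emph{all} the generators $\bx_j$ (so the reduction $\max_i\|\langle\bu,\bv_i\rangle\|_{\R/\Z}\lesssim\max_j\|\langle\bu,\bx_j\rangle\|_{\R/\Z}$ legitimately uses every $\bx_j$), and then invoking $\Gam^*=(M^{\sf T})^{-1}\Z^d$ to transfer to the standard lattice. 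That extra reduction through the integer coefficient matrix $(a_{ij})$ is precisely the ingredient the paper's version is missing.
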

		
		\begin{proof}
			By definition, $\|\bu\|_{\R^d/\Gam^*} =\min\{\|\bu - \bv\|:\ \bv\in \Gam^*\}$. For any $\bv\in \Gam^*$ and $j \le k$, we have $\langle \bv, \bx_j \rangle\in \Z$, hence
			$$
			\big\|\langle \bu, \bx_j \rangle\bigr\|_{\R/\Z} \le |\langle \bu-\bv, \bx_j\rangle| \le  \|\bu-\bv\|\cdot {\|\bx_j\|}_1.
			$$
			Thus,
			$$
			\max_{j \le k} \big\|\langle \bu, \bx_j \rangle\bigr\|_{\R/\Z} \le \bigl(\max_{j\le k} {\|\bx_j\|}_1\bigr)\cdot \|\bu\|_{\R^d/\Gam^*}.
			$$
			
			In order to obtain an estimate in the other direction, choose a subset of $\{\bx_1,\ldots,\bx_m\}$, which is linearly independent and whose linear $\R$-span equals $\R^d$. Without loss of generality
			we can assume that $\Span_\R\{\bx_1,\ldots,\bx_d\} = \R^d$. 
			This implies that $\Gam \supseteq \Z[\bx_1,\ldots,\bx_d]$, hence $\Gam^* \subseteq \bigl(\Z[\bx_1,\ldots,\bx_d]\bigr)^*$.
			For each $j\le d$ let $b_j\in \Z$ be the nearest integer to $\langle \bu,\bx_j\rangle$,which means, by definition, that
			$\big\|\langle \bu, \bx_j \rangle\bigr\|_{\R/\Z} = |\langle \bu,\bx_j\rangle-b_j|$. Let $\{\by_k\}_{k\le d}$ be the dual basis of $\R^d$ for $\{\bx_j\}_{j\le d}$, 
			that is, $\langle \by_k,\bx_j\rangle = \delta_{kj}$. 
			Then $\bv = \sum_{j=1}^k b_j \by_j \in \bigl(\Z[\bx_1,\ldots,\bx_d]\bigr)^*$, hence $\|\bu\|_{\R^d/\Gam^*}\le \|\bu - \bv\|$. The set $\{\by_1,\ldots,\by_d\}$ is also an $\R$-basis for $\R^d$, hence there
			exist $c_j\in \R$ such that $\bu = \sum_{j=1}^d c_j \by_j$. We have
			$$
			\|\bu\|_{\R^d/\Gam^*}\le \|\bu - \bv\| \le d\cdot \bigl(\max_{j\le d} |c_j - b_j|\bigr) \cdot \max_{j\le d} \|\by_j\|.
			$$
			But $c_j =\langle \bu,\bx_j \rangle$ and $b_j\in \Z$, so $|c_j - b_j| \le \big\|\langle \bu,\bx_j\rangle\bigr\|_{\R/\Z}$, and the proof is finished.
		\end{proof}

		Thus, by \eqref{eq-elem1} and Proposition~\ref{prop:BuSo},
		\be \label{ineq1}
		\bigl|S_N^x(f,\om)\bigr| \le C_4 {\|f\|}_\infty\cdot N\cdot  \!\!\!\!\!\prod_{n=0}^{\lfloor \log_{\theta} N - C_2\rfloor} \bigl(1 - \wt c_3\cdot {\bigl\| (\Sf^{\sf T})^n \om{\bf 1}\bigr\|}_{\R^d/\Gam^*}^2\bigr),
		\ee
		where $\wt c_3>0$ also depends only on $\zeta$. \\

		We will run the argument from Section~\ref{sec:approx} with $\om$, $A = \Sf^{\sf T}$, and $L = \Gam^*$, that is,
		$$
		(\Sf^{\sf T})^n \om {\bf 1} = \bp_n + \beps_n,
		$$
		and $\|\beps_n\| = {\| (\Sf^{\sf T})^n \om{\bf 1}\|}_{\R^d/\Gam^*}$ by the definition of the norm.\\
		
		In order to motivate what happens next, let us describe informally the rest of the proof. If for all $n\in \N$ sufficiently large there exists
		$j\le \kappa$, with $\delta_1^{(n)} \ne \delta_j^{(n)}$, then Lemma~\ref{lem-main}, together with Lemma~\ref{lem1}(ii) and Proposition~\ref{prop:BuSo} in the form \eqref{ineq1}, easily imply
		$$
		|S_N^x(f,\om)| \le C(\om)\cdot \|f\|_\infty\cdot N(\log N)^{-\gam},\ \ N\ge N_0(\om),
		$$
		for some $\gam>0$.
		In order to control $C(\om)$ and $N_0(\om)$, we need to specify the meaning of ``sufficiently large''. It is clear that we have to quantify the distance of $\om$ to $0$, since
		if $\om$ is very small, then  $P_n\equiv 0$ for small $n$, and then trivially $\delta_j^{(n)}=\om$ for all $j\le d$. However, for technical reasons we will also need to control the distance
		of $\om$ to $q^{-1}\Z$, where $q$ is from \eqref{def:q}. Indeed, if $\delta_j^{(n)}=\om$ for all $j\le \kappa$, for some $n\in \N$, then $P_n(1/\theta_j) = P_n(1/\theta_1),\ j\le \kappa$.
		Then a simple argument going back to \cite{SolAdic}, which was motivated by \cite{Liv,Liv2},
		shows that $\exp[2\pi i qP_n(1/\theta_1)]$ is an eigenvalue for the substitution $\Z$-action. However, we need this eigenvalue to 
		be nontrivial, that is, $P_n(1/\theta_1)\notin q^{-1}\Z$. Since $\delta_1^{(n)}\to 0$ as $n\to \infty$, the equation \eqref{omega} shows the need to quantify the distance of 
		$\om$ to $q^{-1}\Z$.
		Next we realize this scheme carefully.

		\medskip
		
		{\bf Case 1.} Suppose first that $\om \not\in q^{-1}\Z$
		and let $n_0\in \N$ be such that
		\be \label{supp1}
		\dist(\om,q^{-1}\Z) > c_1\theta_1^{-n_0},\ \ \mbox{where}\ \ c_1 = \frac{b_L {\|\bbe_1^*\|}_1}{|\langle {\bf 1}, \bbe_1^*\rangle|}.
		\ee
		
		{\bf Case 1a}.
		Suppose further that for all $n\ge n_0$ there exists $j\in \{2,\ldots,\kappa\}$ such that $\delta_1^{(n)} \ne \delta_j^{(n)}$.
		Then by Lemma~\ref{lem-main} for all $n\ge n_0$ we have $\max\{|\bz_{n+1}|,\ldots, |\bz_{n+Kn-2}|\}>0$. By Lemma~\ref{lem1}(ii), it follows that
		\be \label{ineq15}
		\max\{\|\beps_{n+1}\|,\ldots, \|\beps_{n+Kn-1}\|\} \ge \rho:= \frac{a_L}{4\|\Sf\|}\  \ \ \mbox{for all}\ n\ge n_0.
		\ee
		Thus \eqref{ineq1} implies
		\be \label{ineq2}
		N \ge \theta^{C_2 + n_0(K+1)^\ell} \implies  \bigl|S_N^x(f,\om)\bigr| \le C_4 {\|f\|}_\infty\cdot N\cdot  \bigl(1 - \wt c_3\rho^2\bigr)^\ell,\ \ \mbox{for all}\ \ell\in \N.
		\ee
		Together with \eqref{supp1} this yields by simple algebraic manipulations 
		that there exist $N_0\in \N$ and $\wt C_4$, depending only on $\zeta$, such that
		\be \label{ineq3}
		\bigl|S_N^x(f,\om)\bigr| \le \wt C_4  {\|f\|}_\infty\cdot N(\log N)^{-\gam} \cdot \log\left(\frac{1}{\dist(\om,q^{-1}\Z)}\right)^\gam,\ \ N\ge N_0,
		\ee
		where
		\be \label{gamma}
		\gam = \frac{-\log(1 - \wt c_3\rho^2\bigr)}{K+1}\,.
		\ee
		We should emphasize that the formula for $\gamma$ can be made explicit, 
		depending only on $\zeta$, in view of \eqref{ineq0}, \eqref{ineq1}, and \eqref{ineq15}.\\
		
		{\bf Case 1b.} The remaining case is that there exists $n_1\ge n_0$, where $n_0$ is from \eqref{supp1}, such that $\delta_j^{(n_1)} = \delta_1^{(n_1)}$ for all $j\le \kappa$. 
		Then
		$$
		|\delta_1^{(n_1)}|\le \frac{\|\beps_{n_1+1}\|\cdot  {\|\bbe_1^*\|}_1}{\theta_1^{n_1} \|\langle {\bf 1}, \bbe_1^*\rangle|} \le \frac{b_L {\|\bbe_1^*\|}_1}{\theta_1^{n_1} |\langle {\bf 1}, \bbe_1^*\rangle|} = c_1\theta_1^{-n_1}
		$$
		by  \eqref{omega}, and it follows from \eqref{supp1} and \eqref{omega} that $P_{n_1}(1/\theta_1)\notin q^{-1}\Z$.
		Recall that by definition, see \eqref{omega}, $qP_n \in \Z[X]$ for some $q\in \N$, where $q$ is independent of $n$. Let 
	\be \label{eq:eigenvalue}
		\alpha:= qP_{n_1}(1/\theta_j),\ \ j\le \kappa.
      \ee
		We have that $\alpha\not\in
		\Z$, and we will show that  $e^{2\pi i \alpha}$ is an eigenvalue for the system $(X_\zeta,T,\mu)$, similarly to \cite{SolAdic}. By Theorem~\ref{th-host}, it suffices to show that $\alpha L^v_n \to 0$ (mod 1), where $L^v_n = \langle {\bf 1}, \Sf^n \ell(v)\rangle$, for all return words $v$. 
		The sequence $\{L_n^v\}_{n\ge 0}$ satisfies the integer recurrence equation, corresponding to the characteristic polynomial $p(x)$ of
		the matrix $\Sf$. We will prove that $\alpha L_n\to 0$ (mod 1) for every such recurrent sequence. In fact, suppose  that $\{L_n\}_{n\ge 0}$
		is an integer sequence such that
		$$
		L_{n+d} = \sum_{j=0}^{d-1} a_j L_{n+j}\ \ \mbox{for}\ n\ge 0,\ \ \mbox{where}\ \ p(x) = x^d-a_{d-1} x^{d-1} - \cdots - a_0.
		$$
		
		As it is well-known, there exist $c_j\in \C$ such that
		$$
		L_n = \sum_{j=1}^d c_j \theta_j^n,\ \ n\ge 0.
		$$
		Since $qP_{n_1}$ is a polynomial with integer coefficients, we can write
		$q  P_{n_1}(x) = b_0 + b_1 x + \cdots + b_\ell x^\ell$ for some $\ell\in \N$ and $b_i \in \Z$. Consider the following sequence of integers, for $n> \ell$:
		$$
		K_n  :=  b_0 L_{n} + b_1 L_{n-1} + \cdots + b_\ell L_{n-\ell}.
		$$
		We have
		\begin{eqnarray*}
			K_n & = & \sum_{i=0}^\ell b_i \sum_{j=1}^d c_j \theta_j^{n-i} = \sum_{j=1}^d c_j \theta_j^n \sum_{i=0}^\ell b_i \theta_j^{-i}\\
			& = & \sum_{j=1}^d c_j \theta_j^n \cdot (q P_{n_1})(1/\theta_j) \\
			& = & \sum_{j=1}^\kappa c_j \theta_j^n \cdot \alpha + \sum_{j=\kappa+1}^d c_j \theta_j^n \cdot (q P_{n_1})(1/\theta_j) \\
			& = & \alpha L_n + \sum_{j=\kappa+1}^d c_j \theta_j^n \cdot \bigl[(q P_{n_1})(1/\theta_j) -\alpha\bigr].
		\end{eqnarray*}
		Since $\max_{j\ge \kappa+1} |\theta_j|<1$, it follows that $\alpha L_n - K_n\to 0$ as $n\to \infty$, and hence $\alpha L_n \to 0$ (mod 1), as desired.
		In the last displayed line we used \eqref{eq:eigenvalue}.

\medskip
		
		{\bf Case 2.} Now suppose that $\om \in q^{-1}\Z$. Recall that $\om\in (0,1)$.
		Let $n_0\in \N$ be such that
		\be \label{supp2}
		q^{-1} > c_1\theta_1^{-n_0},\ \ \mbox{where}\ \ c_1 = \frac{b_L {\|\bbe_1^*\|}_1}{|\langle {\bf 1}, \bbe_1^*\rangle|}.
		\ee

		{\bf Case 2a.}
		Suppose that for all $n\ge n_0$ there exists $j\in \{2,\ldots,\kappa\}$ such that $\delta_1^{(n)} \ne \delta_j^{(n)}$. Then we proceed exactly as in Case 1a to obtain
		\be \label{ineq30}
		\bigl|S_N^x(f,\om)\bigr| \le C_4'  {\|f\|}_\infty\cdot N(\log N)^{-\gam},\ \ N\ge N_0,
		\ee
		for some other constant $C_4'$
		with $\gamma$ from \eqref{gamma}
		
		\medskip
		
		{\bf Case 2b.} 
		The remaining case is that there exists $n\ge n_0$, where $n_0$ is from \eqref{supp2}, such that $\delta_j^{(n)} = \delta_1^{(n)}$ for all $j\le \kappa$. 
		If $\alpha:= qP_n(1/\theta_j)\notin \Z$, we obtain that $e^{2\pi i \alpha}$ is an eigenvalue, exactly as in Case 1b. If on the other hand, we have $\alpha = qP_n(1/\theta_j) \in \Z$, for $j\le \kappa$, then we obtain that
		$\alpha = qP_n(1/\theta_j)$ for all $j\le d$ (applying the corresponding automorphism $\tau_j$). It follows 
		from \eqref{supp2} that $\delta_1^{(n)} = 0$, and then $\delta_j^{(n)}=0$ for all $j\le d$ and hence $\beps_n=0$. But then $A^{n+k} \om{\bf 1} \in L\subset \Z^d$ for all $k\ge 1$, and we 
		obtain that $e^{2\pi i \om}$ is an eigenvalue for the system $(X_\zeta,T,\mu)$.

		\medskip

		Combining Lemma~\ref{lem-easy1} with our estimates \eqref{ineq3} and \eqref{ineq30}, we immediately obtain the following.
		
		\begin{prop} \label{prop:main} Suppose that $\zeta$ is a primitive aperiodic substitution with an irreducible over $\Q$ substitution matrix $\Sf$ and no eigenvalues of $\Sf$ on the unit 
			circle. Suppose furthermore that the substitution $\Z$-action is weakly mixing. 
			Let $\theta$ be the Perron-Frobenius eigenvalue of $\Sf$, and let $\bbe_1^*$ be the Perron-Frobenius eigenvector for $\Sf^{\sf T}$, such that its entries belong to $\Z[\theta]$.
			Let $q\in \N$ be such that $\langle {\bf 1}, \bbe_1^*\rangle^{-1} \in q^{-1}\Z[\theta]$.
			Then there exists $r_0>0$ and $C_5>0$, depending only on the substitution $\zeta$, such that 
			for any cylindrical function $f$ the following holds: 
			\begin{enumerate}
				\item[{\bf (i)}] If $\om\not\in q^{-1}\Z$, then
				\be \label{ineq41}
				\sig_f(B_r(\om)) \le C_5 \|f\|^2_\infty\cdot (\log(1/r))^{-2\gam} \cdot \log\left(\frac{1}{\dist(\om,q^{-1}\Z)}\right)^{2\gam},\ \ \mbox{for all}\ r\in (0,r_0).
				\ee

				\item[{\bf (ii)}] If $\om \in q^{-1}\Z\cap (0,1)$, then
				\be \label{ineq42}
				\sig_f(B_r(\om)) \le C_5 \|f\|^2_\infty\cdot (\log(1/r))^{-2\gam},\ \ \mbox{for all}\ r\in (0,r_0).
				\ee
			\end{enumerate}
		\end{prop}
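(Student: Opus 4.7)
The plan is to combine the uniform twisted Birkhoff sum bounds established above in Cases 1a and 2a with Lemma~\ref{lem-easy1}, using the weak mixing hypothesis to rule out the remaining subcases. The first step is to eliminate Cases 1b and 2b: in each, the preceding analysis produces $\alpha \notin \Z$ with $e^{2\pi i \alpha}$ a nontrivial eigenvalue of $(X_\zeta,T,\mu)$, or (in the residual situation of Case 2b where $\alpha \in \Z$) shows that $e^{2\pi i \om}$ itself is a nontrivial eigenvalue since $\om \in (0,1)$. Either way, weak mixing is contradicted, so only Cases 1a and 2a can occur, according to whether $\om \notin q^{-1}\Z$ or $\om \in q^{-1}\Z \cap (0,1)$ respectively.

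Next I would fix $r \in (0,r_0)$ and set $N = \lfloor (2r)^{-1}\rfloor$, choosing $r_0$ small enough that $N \ge N_0$. In Case 1a, the bound \eqref{ineq3} is uniform in $x \in X_\zeta$, so squaring and integrating against the probability measure $\mu$ gives
\[
G_N(f,\om) = N^{-1}\int_{X_\zeta}|S_N^x(f,\om)|^2\,d\mu(x) \le \wt C_4^{\,2}\,\|f\|_\infty^2 \cdot N(\log N)^{-2\gam}\,\log\!\left(\frac{1}{\dist(\om,q^{-1}\Z)}\right)^{\!2\gam}.
\]
Applying Lemma~\ref{lem-easy1} cancels the remaining factor of $N$, and the identification $\log N \asymp \log(1/r)$ converts the logarithmic decay in $N$ to logarithmic decay in $1/r$, yielding the bound \eqref{ineq41} with a constant $C_5$ depending only on $\zeta$. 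The same calculation with \eqref{ineq30} in place of \eqref{ineq3} — this time without the $\log(1/\dist(\om,q^{-1}\Z))$ factor — produces \eqref{ineq42}.

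The main point requiring attention is the uniformity of $r_0$, $\gam$, and $C_5$ in $\om$. This is essentially built in by the construction of Section~\ref{sec:approx}: the quantities $K$, $s$, $\rho$, $a_L$, $b_L$, and $q$ were all arranged to depend only on the substitution matrix $\Sf$, so the exponent $\gam$ from \eqref{gamma} is purely a datum of $\zeta$, and the entire dependence of the final bound on $\om$ is concentrated in the explicit factor $\log(1/\dist(\om,q^{-1}\Z))$ in part (i), which is absent in part (ii) because the defining inequality \eqref{supp2} no longer involves $\om$. I do not anticipate any further obstacle — all the genuine work has been absorbed into the Diophantine case analysis preceding the proposition, leaving only a routine $L^2$-integration and a single invocation of Lemma~\ref{lem-easy1}.
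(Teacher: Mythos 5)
Your proposal is correct and follows the same route as the paper: eliminate Cases 1b and 2b via weak mixing (each would produce a nontrivial eigenvalue), and for the surviving Cases 1a and 2a take $N=\lfloor(2r)^{-1}\rfloor$, square the uniform twisted Birkhoff sum bounds \eqref{ineq3} or \eqref{ineq30}, integrate over $X_\zeta$, and feed the result into Lemma~\ref{lem-easy1}. The paper treats this as immediate, and your added care about the uniformity of $r_0$, $N_0$, $\gam$, and $C_5$ in $\om$ matches how those constants were arranged in Section~\ref{sec:approx}.
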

		
		For $\om=0$ we need to assume that $\int f\,d\mu=0$. Then we obtain by \cite{Adam} and Lemma~\ref{lem-easy1} that
		\be \label{ineq43}
		\sig_f(B_r(0)) \le \const\cdot \|f\|_\infty^2 \cdot r^{\log |\theta_2|/\log\theta} \le C_5 \|f\|_\infty^2 \cdot (\log(1/r))^{-2\gam},\ \ \mbox{for all}\ r\in (0,r_0),
		\ee 
		where $\theta_2$ is the second largest in modulus eigenvalue of $\Sf$ and $C_5$ is adjusted appropriately. (Recall that $|\theta_2|>1$ by assumption.)
		
		\medskip
		
		Note that  the estimate \eqref{ineq41} ``blows up'' at the points in $q^{-1}\Z$, but ``gluing'' it with \eqref{ineq42}, \eqref{ineq43} will achieve the
		uniform bound \eqref{goal1}. 
		Combining \eqref{ineq42} and  \eqref{ineq43} yields the same inequality for all  $\om\in q^{-1}\Z$
		(note that $\om=1$ is identified with $\om=0$ on the circle). We can assume that 
		$r_0 < \frac{1}{2q}$. Let $\om_0\in q^{-1}\Z$ be such that $|\om-\om_0| = \dist(\om,q^{-1}\Z)$. If
		$$
		\log\frac{1}{|\om-\om_0|} \le \Bigl(\log\frac{1}{r}\Bigr)^{\half},
		$$
		which means $r\ll |\om-\om_0|$,
		then \eqref{ineq41} yields
		$$
		\sig_f(B_r(\om)) \le C_5 \|f\|^2_\infty\cdot (\log(1/r))^{-\gam}.
		$$
		If $\log\frac{1}{|\om-\om_0|} > \sqrt{\log\frac{1}{r}}$ and $r\le |\om-\om_0|$, then $B(\om,r) \subset B\bigl(\om_0, 2|\om-\om_0|\bigr)$, and we obtain from \eqref{ineq42} or  \eqref{ineq43}:
		\begin{eqnarray*}
			\sig_f(B_r(\om)) \le C_5 \|f\|_\infty^2 \cdot \Bigl(\log\frac{1}{2|\om-\om_0|}\Bigr)^{-2\gam} & \le & \wt C_5 \|f\|_\infty^2 \cdot \Bigl(\log\frac{1}{|\om-\om_0|}\Bigr)^{-2\gam} \\[1.1ex]
			& \le & \wt C_5 \|f\|_\infty^2 \cdot \bigl(\log(1/r)\bigr)^{-\gam}.
		\end{eqnarray*}
		If $r>|\om-\om_0|$, then $B(\om,r) \subset B(\om_0,2r)$, and we obtain from \eqref{ineq42} or  \eqref{ineq43}:
		$$
		\sig_f(B_r(\om)) \le C_5 \|f\|_\infty^2 \cdot (\log(1/2r))^{-2\gam} \le \wt C_5 \|f\|_\infty^2 \cdot  (\log(1/r))^{-2\gam}.
		$$
		Combining the last three inequalities implies \eqref{goal1}, completing the proof.
	\end{proof}
	
	
	
	\section{The Salem case} \label{sec:Salem}
	\subsection{Dynamics of Salem automorphisms}
	In this section we will focus on the case when the substitution matrix has eigenvalues on the unit circle; more specifically, that it is a Salem matrix.
	Just as in Section \ref{sec:approx}, we will first prove a general result on the dynamics of a vector of the form $\omega\mathbf{1}$,
	where now we assume $\omega\in (0,1)\cap \Q(\alpha)$, under the action of a toral automorphism defined by an irreducible matrix such that its characteristic polynomial is the 
	minimal polynomial of a Salem number.
	We set the notation in what follows.
	
	\medskip
	
	Let $A\in GL(d,\Z)$ be an irreducible (over $\Q$) matrix whose eigenvalues are  $\alpha_1=\alpha > 1$, a Salem number of degree $d=2m+2$, a single real conjugate $\alpha_2 = \alpha^{-1}$, and complex conjugates of $\alpha$ given by $\alpha_3 = \overline{\alpha_4} = e^{2\pi i \theta_1}, \dots, \alpha_{d-1} = \overline{\alpha_d} = e^{2\pi i \theta_m}$. Denote by $\sigma_j:\Q(\alpha) \hookrightarrow \C$ the Galois embedding sending $\alpha$ to $e^{2\pi i \theta_j}$, for $j=1,\dots,m$.
	
	\medskip
	Let $\omega\in (0,1)\cap \Q(\alpha)$. Decompose the vector $\mathbf{1}$ with respect to an eigenbasis $\{\bbe_k\}^d_{k=1}$ of $A$ (which is diagonalizable over $\C$ by the irreducibility of the characteristic polynomial), where each $\bbe_k \in \Q(\alpha_k)^d$, $k=1,\dots,d$, is the eigenvector associated to $\alpha_k$. Furthermore, we may take $\bbe_k = \overline{\bbe_{k+1}}$ for $k = 3,5,\dots,d-1$, and  $\bbe_{2j+1} = \sigma_j(\bbe_1)$, $j=1,\ldots,m$, where the embedding is applied coordinate-wise. We get
	\[
	\omega A^{n} \mathbf{1} = \omega C_1\alpha^n\bbe_1 + \omega C_2\alpha^{-n}\bbe_2 + \omega\sum_{k=3}^d C_k\alpha_k^n \bbe_k.
	\]
	Consider $\{\bbe_k^*\}^d_{k=1}$, a dual basis of $\{\bbe_k\}^d_{k=1}$ consisting of eigenvectors of $A^{\sf T}$, that is, $A^{\sf T}\bbe_k^* = \overline{\alpha_{k}}\bbe_k^*$ and
	\[
	\langle \bbe_j,\bbe_k^*\rangle = \delta_{j,k}.
	\]
	Note that $\bbe_k^* \in \Q(\overline{\alpha_k})^d= \Q(\alpha_k)^d$ (the non-real conjugates are of modulus one, hence $\overline{\alpha_k} = \alpha_k^{-1}$) and that we can take $\bbe_k^* = \overline{\bbe_{k+1}^*}$ for $k = 3,5,\dots,d-1$. Now we can express the coefficients as
	\[
	C_k = \dfrac{\langle \mathbf{1},\bbe_k^* \rangle}{\langle \bbe_k,\bbe_k^* \rangle} = \langle \mathbf{1},\bbe_k^* \rangle.
	\]
	Observe that $C_k = \overline{C_{k+1}}$ for $k = 3,5,\dots,d-1$. The coefficient $C_1 \in \Q(\alpha)$ does not vanish, since $A$ is irreducible. This implies that each $C_k$ is not zero since each $\bbe_k^* = \sigma(\bbe_1^*)$ for a suitable embedding $\sigma$.
	In fact, $C_{2j+1} = \sig_j(C_1)$ for $j=1,\ldots,m$, and $C_2 = \sig_0(C_1)$, where $\sig_0$ is the  Galois embedding of $\Q(\alpha)$ sending $\alpha$ to $\alpha^{-1}$.
	
	\medskip
	Let $\Gamma < \Z^d$ be an integer lattice such that $A\Gamma \subset \Gamma$, which is generated by $\mathbf{v}_1,\dots,\mathbf{v}_r$. We will be interested in the distribution of $\langle \omega A^n \mathbf{1},\mathbf{v}_i \rangle$ modulo $\Z$. By irreducibility of $A$, $\langle \bbe_1,\mathbf{v}_i \rangle$ is not zero for any $i=1,\dots,r$. Let $\mathbf{v}$ be any of the $\mathbf{v}_i$, then
	\[
	\langle \omega A^n \mathbf{1},\mathbf{v} \rangle = \underbrace{\omega C_1\langle \bbe_1,\mathbf{v} \rangle}_{=:\eta^{\mathbf{v}} \in \Q(\alpha) }\alpha^n + \omega C_2 \langle \bbe_2,\mathbf{v} \rangle \alpha^{-n} + \omega \sum_{k=3}^d C_k \langle \bbe_k,\mathbf{v} \rangle \alpha_k^n,
	\]
	and $\eta^{\mathbf{v}} \neq 0$. We can write $\langle \bbe_k,\mathbf{v} \rangle = Q^{\mathbf{v}}(\alpha_k)$ for some non-trivial polynomial $Q^{\mathbf{v}} \in \Q[X]$ of degree at most $d-1$: since $\langle \bbe_1,\mathbf{v} \rangle$ is not zero, for any embedding $\sigma$, as before, we have $\langle \bbe_k,\mathbf{v} \rangle = \sigma(\langle \bbe_1,\mathbf{v} \rangle) \neq 0$. 
	Then the last formula can be written as
	\be \label{new1}
	\langle \omega A^n \mathbf{1},\mathbf{v} \rangle = \eta^\bv\alpha^n + \om\sig_0(C_1)Q^{\mathbf{v}}(\alpha_2) \alpha^{-n} + 
	\om\sum_{j=1}^m 2\re\left[\sig_j(C_1) Q^{\mathbf{v}}(\alpha_{2j+1})\alpha_{2j+1}^n\right],
	\ee
	where $$\eta^{\mathbf{v}} = \om C_1 Q^{\mathbf{v}}(\alpha)  = \om\langle \mathbf{1},\bbe_1^*\rangle \langle \bbe_1,\mathbf{v} \rangle \in \Q(\alpha).$$
	Let 
	\[
	\eta^{\mathbf{v}} = \dfrac{l_0 + l_1\alpha + \dots + l_{d-1}\alpha^{d-1}}{L},
	\]
	where $l_j \in \Z, L\geq1 \text{ and } \gcd(l_0,\dots,l_{d-1},L) = 1$. Note that $L$ does not depend on the vector $\mathbf{v}$, since the latter is an integer vector. Recall the definition of \textit{trace} of an algebraic number: for $\eta\in\Q(\alpha)$, set $\Tr(\eta) = \sum_{\sigma:\Q(\alpha) \hookrightarrow \C} \sigma(\eta)$, where the sum runs over all $d$ embeddings. In particular, $\Tr(\eta)\in\Z$ for $\eta\in\Z[\alpha]$.
	Then,
	\be \label{new2}
	\Z\ni\Tr(L\eta^{\mathbf{v}}\alpha^n) = L\eta^{\mathbf{v}}\alpha^n + L\sig_0(\eta^{\mathbf{v}}) \alpha^{-n} + L \sum_{j=1}^m 2\re\left[\sig_j(\eta^{\mathbf{v}})\alpha_{2j+1}^n\right].
	\ee
	Observe that
	$$
	\sig_0(\eta^{\mathbf{v}}) = \sig_0(\om) \sig_0(C_1) Q^{\mathbf{v}}(\alpha^{-1}),\ \ \ \sig_j(\eta^{\mathbf{v}}) = \sig_j(\om) \sig_j(C_1) Q^{\mathbf{v}}(\alpha_{2j+1}),\ j=1,\ldots,m.
	$$
	It follows from \eqref{new1} and \eqref{new2} that 
	\begin{eqnarray} \nonumber
		L \langle \omega A^n \mathbf{1},\mathbf{v} \rangle & =& \Tr(L\eta^{\mathbf{v}}\alpha^n) + L(\om-\sig_0(\om))Q^{\mathbf{v}}(\alpha_2)\alpha^{-n} \\ \nonumber &  & + 
		2L\sum_{j=1}^m \re\left[(\om-\sig_j(\om))\sig_j(C_1) Q^{\mathbf{v}}(\alpha_{2j+1})\alpha_{2j+1}^n\right] \\
		& = & \Tr(L\eta^{\mathbf{v}}\alpha^n) + L(\om-\sig_0(\om))Q^{\mathbf{v}}(\alpha_2)\alpha^{-n} + L\sum_{j=1}^m  \mathfrak{H}_j \cos(2\pi \theta_j + \phi_j), \label{new3}
	\end{eqnarray}
	where
	\be \label{frakh}
	\mathfrak{H}_j = 2\bigl|{(\om-\sig_j(\om))\sig_j(C_1) Q^{\mathbf{v}}(\alpha_{2j+1})}\bigr|\ \ \ \mbox{and} \ \ \ \phi_j = \arg\bigl({(\om-\sig_j(\om))\sig_j(C_1) Q^{\mathbf{v}}(\alpha_{2j+1})}\bigr).
	\ee

	Observe that $\bigl(\Tr(L\eta^{\mathbf{v}}\alpha^n)\bigr)_{n\ge 0}$ is an integer sequence 
	satisfying a linear recurrence with a characteristic polynomial equal to the minimal polynomial of $\alpha$ (and the characteristic polynomial of $A$). It is shown in in the course of the proof of \cite[Proposition 4.1]{Marshall20},
	similarly to \cite{Dub}, that $\bigl(\Tr(L\eta^{\mathbf{v}}\alpha^n) \text{ (mod }L)\bigr)_{n\ge 0}$ is a strictly periodic sequence of some period  $1\le P(\mathbf{v}) \le L^d$.
	Let $$R_n = \sum^m_{j=1} \mathfrak{H}_j\cos(2\pi n\theta_j + \phi_j).$$
	We obtain from \eqref{new3} 
	that there exists $1\le P(\mathbf{v}) \le L^d$, a positive integer, such that for every $0\leq p \leq  P(\mathbf{v})-1$ there is an integer $0\leq a^{\mathbf{v}}_p\leq L-1$, such that for all $n\geq 0$,
	\be \label{new4}
	\langle \omega A^n \mathbf{1},\mathbf{v} \rangle \text{ (mod 1) } = 
	\dfrac{a^{\mathbf{v}}_p}{L} + (\om-\sig_0(\om))Q^{\mathbf{v}}(\alpha_2)\alpha^{-n} + R_n \text{ (mod 1)},\ \ \ \mbox{for $n\equiv p \text{ (mod } P(\mathbf{v}))$}.
	\ee
	
	We are interested in the distribution of the sequence in the last equation. Given an interval $J\subset [0,1]$, we have 
	\begin{align*} 
		&\lim_{N\to\infty} \dfrac{1}{N}\#\bigl\{n\leq N \,:\, \langle \omega A^n \mathbf{1},\mathbf{v} \rangle \text{ (mod 1) } \in J \bigr\} \\
		&= \lim_{N\to\infty} \dfrac{1}{ P(\mathbf{v})}\sum_{p=0}^{ P(\mathbf{v})-1}\dfrac{1}{N/P(\mathbf{v})} \#\bigl\{n\leq N, n\equiv p \text{ (mod } P(\mathbf{v})) \,:\, \langle \omega A^n \mathbf{1},\mathbf{v} \rangle \text{ (mod 1) } \in J \bigr\} \\
		&= \lim_{N\to\infty}   \dfrac{1}{ P(\mathbf{v})}\sum_{p=0}^{ P(\mathbf{v})-1} \dfrac{1}{N/ P(\mathbf{v})}\#\left\{n\leq N, n\equiv p \text{ (mod } P(\mathbf{v})) \,:\, \dfrac{a^{\mathbf{v}}_p}{L} + R_n \text{ (mod 1) } \in J \right\}.
	\end{align*} 

	Denoting by $\mathds{1}_J$ the $\Z$-periodic extension of the indicator of an interval $J\subset[0,1]$ we conclude that 
	\begin{equation} \label{limit}
		\lim_{N\to\infty} \dfrac{1}{N}\#\bigl\{n\leq N \,:\, \langle \omega A^n \mathbf{1},\mathbf{v} \rangle \text{ (mod 1) } \in J \bigr\} = 
		\dfrac{1}{P(\mathbf{v})} \sum_{p=0}^{P(\mathbf{v})-1} \int_{(\R/\Z)^m} \mathds{1}_{_J}\left(\dfrac{a^{\mathbf{v}}_p}{L} + \mathfrak{R}(\vec{x})\right)d\vec{x}, 
	\end{equation} where
	\[
	\mathfrak{R}(\vec{x}) = \sum_{j=1}^m \mathfrak{H}_j \cos(2\pi x_j),
	\]
	since $\bigl((n\theta_1,\ldots, n\theta_m)\bigr)_{n\ge 1}$ is uniformly distributed modulo $\Z^m$, see \cite[Lemma 3.8]{Bug12}, and the sequences involved have continuous distribution functions. This integral is similar to the one treated in \cite{Marshall20}.
	however, unlike in
	\cite{Marshall20}, it is not obvious that $\mathfrak{R}(\vec{x})\not\equiv 0$.
	Recall that our construction depends on the vector $\mathbf{v}$, so we will write $R_n(\mathbf{v}), \mathfrak{R}_\mathbf{v}(\vec x), \mathfrak{H}_j(\mathbf{v})$ below. Let
	$$
	\mathfrak{H}(\mathbf{v}) = \max_{\vec x\in \R^m} |\mathfrak{R}_\mathbf{v}(\vec{x})| = \sum_{j=1}^m \mathfrak{H}_j(\mathbf{v}).
	$$

	\begin{lemma} \label{lem:nonde}
		If $\omega\notin \Q$, then $\mathfrak{H}(\mathbf{v}_i)\ne 0$ for any  $\mathbf{v}_i$ from the set of free generators of the lattice $\Gam<\R^d$.
	\end{lemma}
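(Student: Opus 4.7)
The plan is to inspect each factor in the formula \eqref{frakh}
$$
\mathfrak{H}_j(\mathbf{v}) = 2\bigl|(\omega - \sigma_j(\omega))\,\sigma_j(C_1)\,Q^{\mathbf{v}}(\alpha_{2j+1})\bigr|
$$
separately. First I would show that for $\mathbf{v} = \mathbf{v}_i$, the factors $\sigma_j(C_1)$ and $Q^{\mathbf{v}_i}(\alpha_{2j+1})$ are never zero, so that $\mathfrak{H}_j(\mathbf{v}_i) = 0$ if and only if $\sigma_j(\omega) = \omega$. The nonvanishing of $\sigma_j(C_1)$ follows from $C_1 = \langle \mathbf{1}, \bbe_1^* \rangle \neq 0$ (recorded in the paragraph preceding the lemma as a consequence of the irreducibility of $A$) together with the fact that Galois embeddings are injective. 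The nonvanishing of $Q^{\mathbf{v}_i}(\alpha_{2j+1}) = \sigma_j(\langle \bbe_1, \mathbf{v}_i \rangle)$ follows from $\langle \bbe_1, \mathbf{v}_i \rangle \neq 0$, also noted in that same paragraph: if it vanished, applying every embedding would give $\langle \bbe_k, \mathbf{v}_i \rangle = 0$ for all $k$, forcing $\mathbf{v}_i = 0$ since the $\bbe_k$ form a basis of $\C^d$.

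It then suffices to establish the following purely algebraic statement: if $\omega \in \Q(\alpha) \setminus \Q$, then $\sigma_j(\omega) \neq \omega$ for at least one $j \in \{1, \ldots, m\}$. I would argue by contradiction, using a Galois-counting argument. Suppose $\sigma_j(\omega) = \omega$ for every $j = 1, \ldots, m$. Since $\omega \in \R$, taking complex conjugates of these equalities yields $\bar{\sigma}_j(\omega) = \omega$ as well, where $\bar{\sigma}_j$ denotes the embedding sending $\alpha$ to $e^{-2\pi i \theta_j}$. Therefore $\omega$ is fixed by the identity embedding together with the $2m$ embeddings $\sigma_j, \bar{\sigma}_j$, $j = 1, \ldots, m$, giving at least $2m + 1$ of the $d = 2m + 2$ embeddings $\Q(\alpha) \hookrightarrow \C$. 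By the standard fact that the number of embeddings fixing $\omega$ equals $[\Q(\alpha) : \Q(\omega)]$, and by the tower law this number divides $d$. Since $\alpha$ is a Salem number we have $d \geq 4$, so $m \geq 1$, and then $2m+1$ does not divide $2m+2$. Hence the count of fixing embeddings must be strictly greater than $2m + 1$, forcing it to equal $d$; this yields $\Q(\omega) = \Q$, contradicting $\omega \notin \Q$.

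The only nontrivial step is the Galois-counting in the second paragraph; the remaining ingredients are direct consequences of the irreducibility of $A$ that are already invoked elsewhere in the paper. I expect the main subtlety is simply to spell out that the symmetry imposed on $\omega$ by the equalities $\sigma_j(\omega) = \omega$ together with its reality forces $\omega$ to be fixed by a number of embeddings incompatible with the divisibility constraint from the tower law.
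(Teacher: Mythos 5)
Your proof is correct, and it takes a genuinely different route from the paper. The paper's argument is dynamical: assuming $\mathfrak{H}(\mathbf{v}_i)=0$, it observes from \eqref{new4} that $\langle L\omega A^n\mathbf{1},\mathbf{v}_i\rangle\to 0\pmod 1$, then invokes \cite[Lemma 2]{Kwapisz} to conclude $L\omega\mathbf{1}=\mathbf{q}+\mathbf{s}$ with $\mathbf{q}\in\Q^d$ and $\mathbf{s}\in\Span_\R(\bbe_2)$, and finally uses the rational independence of the entries of $\bbe_2$ (irreducibility of $A$) to force $\mathbf{s}=\mathbf{0}$, hence $\omega\in\Q$. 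You instead argue purely algebraically: after checking that the factors $\sigma_j(C_1)$ and $Q^{\mathbf{v}_i}(\alpha_{2j+1})=\sigma_j(\langle\bbe_1,\mathbf{v}_i\rangle)$ are nonzero (both via irreducibility of $A$, as the paper already records), you reduce $\mathfrak{H}(\mathbf{v}_i)=0$ to $\sigma_j(\omega)=\omega$ for all $j$, and then the Galois count — $\omega$ is real so it is also fixed by the conjugate embeddings, giving at least $d-1$ fixers, yet the number of fixers equals $[\Q(\alpha):\Q(\omega)]$ which divides $d$, and $d-1\nmid d$ for $d\geq 4$ — forces $\Q(\omega)=\Q$. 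Both proofs rest ultimately on the irreducibility of $A$; the paper's buys a uniform statement by outsourcing to Kwapisz's dynamical lemma, while yours is self-contained and arguably more transparent, replacing the dynamical black box with a short counting argument over the Galois embeddings of a Salem field.
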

	
	\begin{proof} We argue by contradiction. If $\mathfrak{H}(\mathbf{v}_i)=0$ for some $i$, then $R_n(\mathbf{v}_i) = 0$ for all $n$.
		Hence, in view of \eqref{new4}, 
		$$
		\langle \omega A^n \mathbf{1},\mathbf{v}_i \rangle \text{ (mod 1) } = 
		\dfrac{a^{\mathbf{v}_i}_p}{L} + (\om-\sig_0(\om))Q^{\mathbf{v}_i}(\alpha_2)\alpha^{-n}  \text{ (mod 1)} \longrightarrow \dfrac{a^{\mathbf{v}_i}_p}{L},
		$$
		as $n\equiv p \text{ (mod }P)$ goes to infinity. Therefore,
		\be \label{eq:return}
		\langle L \omega A^n \mathbf{1},\mathbf{v}_i \rangle \text{ (mod 1) } \longrightarrow 0,\ \ n\to \infty.
		\ee
		By \cite[Lemma 2]{Kwapisz}, equation \eqref{eq:return} implies
		\[
		L\omega\mathbf{1} = \mathbf{q} + \mathbf{s}, \quad \mathbf{q}\in\Q^d,\; \mathbf{s}\in \text{Span}_\R(\bbe_2).
		\]
		But this means $\mathbf{s}=\mathbf{0}$, since otherwise there exists $\lambda\in\R$ different from zero such that $\lambda \bbe_2\in \Z^d$, which is absurd, since $\bbe_2$ has rationally independent entries by the irreducibility of the matrix. 
	\end{proof}
	
	\medskip
	
	The main technical ingredient of the proof of Theorem~\ref{th:Salem} is the next estimate. Recall that the periodic extension of the indicator function
	of an interval $J\subset[0,1]$ is denoted by $\mathds{1}_J$. For $0<\delta<1/2$, we will denote by $J(\delta)$ the interval $[\delta, 1-\delta]$.

	\begin{theorem}[\cite{Marshall20}]\label{Integral} 
		Let $G_1,\dots,G_m$ be non-negative real numbers, such that $\sum_{i=1}^m G_i \ge \Delta$ for some constant $\Delta>0$. Then there exists an  effective $\delta = \delta(\Delta) > 0$ such that for any $s_0 \ge 0$, 
		\[
		\int_{(\R/\Z)^m} \mathds{1}_{J(\delta)} \left( s_0 + \sum_{j=1}^m G_j \cos(2\pi x_j) \right) dx_1\dots dx_m \geq \dfrac{1}{2}. 
		\]
	\end{theorem}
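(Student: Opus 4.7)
The plan is to pass to Fourier coefficients on $\R/\Z$, converting the integral into an oscillatory sum where the hypothesis on $\sum G_j$ produces decay. Interpret the integral as $\widetilde\mu([\delta,1-\delta])$, where $\widetilde\mu$ is the law on $\R/\Z$ of $S=s_0+\sum_{j=1}^m G_j\cos(2\pi x_j)$. The classical identity $\int_0^1 e^{2\pi it\cos(2\pi x)}\,dx=J_0(2\pi t)$, with $J_0$ the Bessel function of order zero, gives the factorization
\[
\widehat{\widetilde\mu}(k)=e^{-2\pi iks_0}\prod_{j=1}^m J_0(2\pi kG_j),\qquad k\in\Z.
\]
Pairing this with the Fourier expansion of $\mathds{1}_{[\delta,1-\delta]}$ (whose coefficients are $1-2\delta$ at $k=0$ and $-\sin(2\pi k\delta)/(\pi k)$ elsewhere) reduces the desired lower bound $\ge\tfrac{1}{2}$ to the estimate
\[
\Bigl|\sum_{k\ne 0}\frac{\sin(2\pi k\delta)}{\pi k}\,e^{2\pi iks_0}\prod_{j=1}^m J_0(2\pi kG_j)\Bigr|\le \tfrac{1}{2}-2\delta,
\]
with uniformity in $s_0$ automatic via the triangle inequality.

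To bound this sum, extract Bessel decay from the hypothesis. By pigeonhole, $\sum G_j\ge\Delta$ yields an index $j^*$ with $G_{j^*}\ge\Delta/m$ (with $m$ fixed by the setting, namely $(\deg\alpha-2)/2$ for the Salem number $\alpha$). The explicit bounds $|J_0(z)|\le 1$ and $|J_0(z)|\le\sqrt{2/(\pi|z|)}$ for $|z|$ above an absolute threshold — both derivable from Hankel's integral representation — give $\prod_j|J_0(2\pi kG_j)|\le|J_0(2\pi kG_{j^*})|\le C(kG_{j^*})^{-1/2}$ for $k$ large. Split the series at a cut-off $K$: the low block $1\le k\le K$ contributes at most $2K\delta$ via $|\sin(2\pi k\delta)/(\pi k)|\le 2\delta$, and the tail contributes at most $C'(KG_{j^*})^{-1/2}$. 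Optimizing at $K\asymp\delta^{-2/3}G_{j^*}^{-1/3}$ yields a total bound of order $\delta^{1/3}G_{j^*}^{-1/3}$, which drops below $\tfrac{1}{2}-2\delta$ once $\delta$ is chosen sufficiently small in terms of $\Delta/m$.

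The main obstacle is to obtain a genuinely \emph{effective} constant $\delta=\delta(\Delta)$ as asserted: since the Bessel asymptotics, the pigeonhole choice of $j^*$, and the optimization of the threshold $K$ all carry computable constants, the resulting $\delta$ is explicit, of order $(\Delta/m)^{3}$ up to an absolute factor. A minor subtlety is that some $G_j$ may vanish, contributing $J_0(0)=1$ and no decay; this is harmless because decay from the single factor indexed by $j^*$ already suffices to make the oscillatory sum convergent and small.
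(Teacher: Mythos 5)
Your Fourier--Bessel argument is correct and natural, and delivers the effective $\delta$ the theorem requires. The Fourier coefficients of $\mathds{1}_{[\delta,1-\delta]}$ and the factorization of the characteristic function of $S$ into $\prod_j J_0(2\pi kG_j)$ are right; the pigeonhole extraction of one $G_{j^*}\ge\Delta/m$, combined with $|J_0(2\pi kG_{j^*})|\lesssim (kG_{j^*})^{-1/2}$ for the tail and $|\sin(2\pi k\delta)/(\pi k)|\le 2\delta$ for the low block, closes the estimate. The pairing of the two Fourier series is licit because $\widehat{\mathds{1}}_{[\delta,1-\delta]}(k)=O(|k|^{-1})$ while $\prod_j J_0(2\pi kG_j)=O(|k|^{-1/2})$, so the double sum converges absolutely, and the decay of the characteristic function also rules out atoms of the law of $S$ at the endpoints $\delta,1-\delta$. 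One slip: your own optimization yields an oscillatory-sum bound of order $\delta^{1/3}G_{j^*}^{-1/3}$, and forcing this below $\tfrac14$ gives $\delta\lesssim G_{j^*}\asymp\Delta/m$ to the \emph{first} power (it is the absolute constant, not $\Delta/m$, that gets cubed in the denominator); the announced ``order $(\Delta/m)^3$'' is harmlessly pessimistic but is not what the calculation produces. As for comparison with the paper: the paper contains no proof of this theorem in-text, only the citation to Lemmas 4.3 and 4.4--4.7 of \cite{Marshall20}, so a line-by-line comparison is impossible from the given source; but Bessel-function decay of the characteristic function is the standard tool for distribution-mod-one of cosine sums, so your streamlined version is very likely the same idea in compressed form.
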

Here, and in several places below, 	an ``effective'' constant means that it depends only on $|\om|, |\sig_0(\om)|$, and $L$, and
this dependence can be made explicit. It will also depend on the matrix $A$, of course.
	In \cite{Marshall20} the theorem is proved in Lemmas 4.3 and 4.4-4.7, with the constant $s_0$ equal to 
	$\ell/L$ for some integer $\ell$ (the only case needed here as well), but this does not change the 
	proof. Now, at least for $\om\notin \Q$, we can immediately apply Theorem~\ref{Integral} to the functions $\frac{a^{\bv_i}_p}{L} + \mathfrak{R}_{\mathbf{v}_i}(\vec x)$, for $p\in \{0,\ldots,P(\mathbf{v}_i)-1\}$, appearing in \eqref{limit}, with 
	$$\mathfrak{H}(\mathbf{v}_i)=\sum^m_{j=1} \bigl|{(\om-\sig_j(\om))\sig_j(C_1) Q^{\mathbf{v}_i}(\alpha_{2j+1})}\bigr|>0, \text{ if } \omega \notin \Q,$$
	in view of \eqref{frakh} and Lemma \ref{lem:nonde}. However, we do not have a good control on the size of $|\om-\sig_j(\om)|$. We would like to make our estimates effective, similarly to \cite{Marshall20}.
	To this end, we separate the analysis into cases, according to size of the
	constant $\max_{i=1,\dots,r}\mathfrak{H}(\mathbf{v}_i) =: \mathfrak{H}(\Gamma)$.\\
	
	Now we formulate and prove the main result of this section, an extension of \cite[Theorem 1.4]{Marshall20}.
	\begin{theorem}\label{thm:SalemDist}
		Let $A\in GL(d,\Z)$ be an irreducible matrix such that its characteristic polynomial is the minimal polynomial of a Salem number $\alpha$. Let $\omega\in(0,1)\cap\Q(\alpha)$. Then one of the next possibilities hold:
		\begin{itemize}
			\item[(i)] $\om\mathbf{1} \in \Gamma^*$.
			\item[(ii)] There exists an effective $\delta = \delta(\omega) > 0$ such that
			\[
			\lim_{N\to\infty} \dfrac{1}{N}\#\bigl\{n\leq N \,:\, \norm{\omega A^n \mathbf{1} }_{\R^d/\Gamma^*} \geq \delta \bigr\} \geq \dfrac{1}{2L^d},
			\]
			where $L$ is the denominator of $\om$ in minimal form. 
			Moreover, $\delta$ depends only on $\abs{\omega}$, $\abs{\sigma_0(\omega)}$, and $L$.
		\end{itemize}
	\end{theorem}

\begin{proof}

	{\bf Case 1:}  $\omega \in\Q$, i.e, $\mathfrak{H}(\Gamma)=0$. Put $\omega = l/L$, for $1\leq l \leq L$. Since the orbit under the toral  automorphism defined by $A$ of $\omega\mathbf{1}$ is periodic, for any integer vector $\mathbf{v}$, the sequence $(\langle \omega A^n \mathbf{1}, \mathbf{v} \rangle \text{ (mod 1) })_{n\geq 0}$ is purely periodic of period $P(\mathbf{v})\le L^d$. If this sequence is the null sequence for every $\mathbf{v}_i$, then by definition $\om\mathbf{1}\in\Gamma^*$,  i.e., the first possibility (i) of the theorem holds. Otherwise we conclude that for some $\mathbf{v}_i$ and $p \in \{0,\dots,P-1\}$ ($P = P(\mathbf{v}_i)$),
	\[
	\norm{\langle \omega A^{Pn + p} \mathbf{1}, \mathbf{v}_i \rangle}_{\R/\Z} \geq 1/L, \text{ for all } n\geq 0. 
	\]
	It follows that
	\begin{equation}\label{rational}
		\lim_{N\to\infty} \dfrac{1}{N}\#\bigl\{n\leq N \,:\, \langle \omega A^n \mathbf{1},\mathbf{v}_i \rangle \text{ (mod 1) } \in J(1/L) \bigr\} \geq \dfrac{1}{L^d},
	\end{equation}
	which implies (ii), by Lemma~\ref{lem:lattice}.
	
	\medskip
	
	From now on we suppose $\omega \notin \Q$. In view of Lemma~\ref{lem:lattice}, 
		\be \label{eq:latt2}
	\max_{i=1,\dots,r} \, \norm{\langle \omega A^n \mathbf{1},\mathbf{v}_i \rangle}_{\R/\Z} \geq C_\Gam^{-1}\norm{\omega A^n \mathbf{1}}_{\R/\Gamma^*},
	\ee
	for some explicit constant $C_\Gam>1$ depending only on the lattice (and the choice of generators). 
	 Recall that the period $P = P(\mathbf{v}_i)$ depends on the integer vector $\mathbf{v}_i$. Since in what follows  we fix $\mathbf{v}_i$, we will omit the dependence on $\bv_i$ from the notation. Let $B=A^P$, and denote its eigenvalues by $\beta_j = \alpha^P_j$.
	Fix $p\in \{0,\ldots,P-1\}$ and write, similarly to Section~\ref{sec:approx}:
	\be \label{construct}
	\omega B^n A^p\mathbf{1} = \mathbf{p}_n + \beps_n,
	\ee
	with $\mathbf{p}_n$ being the nearest element in $\Gamma^*$. It follows from Lemma~\ref{lem1}(ii) and \eqref{eq:latt2} that for an appropriate explicit constant $c=c_\Gam$ holds the implication
	$$
	\max_{i=1,\dots,r} \norm{\langle \omega A^n \mathbf{1},\mathbf{v}_i \rangle}_{\R/\Z} < c \implies \beps_{n+1} = B \beps_n \quad \
	\mbox{for all} \ \mathbf{v}_i.
	$$

	{\bf Case 2:}   $0<\mathfrak{H}(\Gamma) < c$ and $a^{\mathbf{v}_i}_p=0$ for all $i=1,\dots,r$. Let $\mathbf{v}$ be any of the generators $\mathbf{v}_i$. Then \eqref{new4} implies, in view of $\mathfrak{H}(\mathbf{v}) \ge |R_n(\mathbf{v})|$, that
	\begin{equation}\label{Rec}
		\beps_{n_1+n} = B^n\beps_{n_1},
	\end{equation}
	for all $n\geq0$, for some $n_1$ (depending on $\abs{\omega}$ and $\abs{\sigma_0(\omega)}$). Now, equation \eqref{eq3} becomes, for $k\le d$:
	\[
	\omega\langle \mathbf{1},\bbe_k^* \rangle = \underbrace{ \langle\mathbf{p}_{0},\bbe_k^* \rangle - \dfrac{b_1^{(k)}}{\beta_k} -\dots-\dfrac{b_{n_1}^k}{\beta_k^{n_1}} }_{=: \langle \mathbf{1},\bbe_k^* \rangle P_{n_1}(1/\beta_k)} + \underbrace{ \dfrac{\langle \beps_{n_1},\bbe_k^* \rangle}{\beta_k^{n_1}} }_{=: \langle \mathbf{1},\bbe_k^* \rangle \delta_k^{(n_1)}}.
	\]
	Repeating the steps after equation (\ref{eq3}), we get an analog of \eqref{eq31}:
	\begin{equation} \label{Polynom}
		P_{n_1}(1/\beta_k) - P_{n_1}(1/\beta) = \delta_1^{(n_1)} - \delta_k^{(n_1)} = - \delta_k^{(n_1)}, 
	\end{equation}
	since we claim that  $\delta_1^{(n_1)} = 0$. Indeed, for any $n\geq0$ we have by \eqref{Rec}:
	\begin{align*}
		\beta^n\abs{\langle\beps_{n_1},\bbe_1^{\:*}\rangle} &= \abs{\langle B^n\beps_{n_1},\bbe_1^{\:*}\rangle}\\
		& = \abs{\langle\beps_{n_1+n},\bbe_1^{\:*}\rangle}\\
		&\leq {\norm{\beps_{n_1+n}}}_2\cdot{\norm{\bbe_1^{\:*}}}_2,
	\end{align*}
	which leads to a contradiction if $\delta_1^{(n_1)} \neq 0$, as $n$ goes to infinity.\\
	
	Decomposing $\beps_{n_1}$ with respect to the eigenbasis of $B$ (the same one as for $A$), we obtain
	\[
	\beps_{n_1+n} = D_1(\omega,n)\beta^n \bbe_1 + D_2(\omega,n_1)\beta^{-n}\bbe_2 + \sum_{j=1}^m  2\re\left( D_j(\omega,n_1)e^{2\pi i nP\theta_j}\bbe_{2j+1}\right).
	\]
	Note that $D_1(\omega,n_1) = \langle\beps_{n_1}, \bbe_1^{*} \rangle  = 0$ as we showed above. We obtain 
	that for all $n\geq0$,
	\begin{eqnarray*}
		& & \langle \beps_{n_1+n}, \mathbf{v} \rangle \text{ (mod 1) } \\
		& = & D_2(\omega,n_1) \langle \bbe_2, \mathbf{v}\rangle\beta^{-n} + \sum_{j=1}^m 2\re\left( D_j(\omega,n_1)\langle \bbe_{2j+1},\mathbf{v}\rangle \beta_j^{n} \right) \text{ (mod 1) }\\
		& = & D_2(\omega,n_1)\langle \bbe_2, \mathbf{v}\rangle\beta^{-n} + 2\sum_{j=1}^m \abs{D_j(\omega,n_1)} \cdot \abs{\langle \bbe_{2j+1},\mathbf{v}\rangle} \cos(2\pi nP\theta_j + \xi_j) \text{ (mod 1) },
	\end{eqnarray*}
	where $\xi_j\in\R$. By \eqref{construct}, for all $n\geq 0$,
	\[
	\langle \omega B^{n_1+n} A^p\mathbf{1},\mathbf{v} \rangle \text{ (mod 1) } = \langle \beps_{n_1+n}, \mathbf{v} \rangle \text{ (mod 1) }.
	\]
	Comparing this equality with \eqref{new4} we can deduce that
	\[
	\mathfrak{H}_j(\mathbf{v})  = \abs{D_j(\omega,n_1)} \cdot \abs{\langle \bbe_{2j+1},\mathbf{v}\rangle} = \abs{\langle \mathbf{1},\bbe_k^* \rangle\delta_{2j+1}^{(n_1)}} \abs{\langle \bbe_{2j+1},\mathbf{v}\rangle},
	\]
	since the amplitudes of asymptotically equal oscillating sums must match.\\

	Since $\mathfrak{H}(\mathbf{v}) \neq 0$, the last equation shows that some  $\delta_{2j+1}^{(n_1)}$ is not equal to zero by Lemma \ref{lem:nonde}. In view of \eqref{Polynom}, we have an explicit lower bound in terms of $\beta$, $|\om|$ and $|\sig_0(\om)|$ for $\abs{\delta_{2j+1}^{(n_1)}}$, which gives us a lower bound for $\mathfrak{H}_j(\mathbf{v})$: by Lemma \ref{hochman}, there exists a constant $s>0$ only depending on $\beta$ such that 
	\[
	\abs{P_{n_1}(1/\beta_k) - P_{n_1}(1/\beta)} \geq s^{n_1}.
	\]
	In other words, for some constant $t = t(A,\mathbf{v})>0$,
	\[
	\mathfrak{H}_j(\mathbf{v}) \geq ts^{n_1}.
	\]
	Since $\beta = \alpha^P$ and $1\leq P \leq L^d$, the dependence on $\beta$ may be changed to a dependence on $\alpha$ and $L$. To summarize, 
	we may use Theorem \ref{Integral} with $\Delta = \Delta(\abs{\om},\abs{\sigma_0(\om)},L) = ts^{n_1}$.\\

	{\bf Case 3: } $0<\mathfrak{H}(\Gamma) < c$ and $a^{\mathbf{v}_i}_p\neq0$, for some $\mathbf{v}_i$. We may suppose $\mathfrak{H}(\Gamma)$ is smaller than $1/2L$, otherwise we conclude using Theorem \ref{Integral} with $\Delta = 1/2L$. Recall that $\mathfrak{H}(\mathbf{v}_i) = \max_{\vec{x}\in[0,1]^d} \abs{\mathfrak{R}_{\mathbf{v}_i}(\vec{x})}$. In this case,
	\[
	\dfrac{1}{4L} \leq \dfrac{-1/2+1}{L}\leq \dfrac{-\mathfrak{R}_{\mathbf{v}_i}(\vec{x})+a^{\mathbf{v}_i}_p}{L} \leq \dfrac{1/2+L-1}{L} \leq 1-\dfrac{1}{4L}.
	\]
	That is, the integral of equation \eqref{limit} is equal to one for the interval $J(\delta)$ with $\delta = 1/4L$.\\
	
	{\bf Case 4: }  $\mathfrak{H}(\Gamma) \geq c$. In this case we can apply Theorem \ref{Integral} with $\Delta = c$, which is independent of $\om$.

	Cases 2-4 above lead us to the same conclusion: we obtain the second claim in the theorem simply by equation \eqref{limit}.
\end{proof}

	\subsection{Application to Salem substitutions}
	Let $\zeta$ be an aperiodic primitive substitution such that its substitution matrix is irreducible and its Perron-Frobenius eigenvalue is a Salem number $\alpha$. As in Section~\ref{ProofSec}, we can assume without loss of generality that all return words for $\zeta$ of less than certain length are good return words. Let $\Gam$ be the lattice generated by
	population vectors of return words. Then $\Gam=\Z[\ell(v_1),\ldots,\ell(v_k)]$ for some return words $v_1,\ldots,v_k$, with $k\ge d$.\\
	
	Now it is easy to conclude the proof of Theorem~\ref{th:Salem}, similarly to Section \ref{ProofSec}. 
	Let $\omega\in(0,1)\cap\Q(\alpha)$ and $A=\Sf_\zeta^{\sf T}$. If $\omega\in \Q$, then we cannot have $(\langle \omega A^n \mathbf{1}, \ell(v_i) \rangle \text{ (mod 1) })_{n\geq 0}$ identically equal to zero for every $v_i$, that is, $\om\mathbf{1}\in\Gamma^*$, since in that case the same its true for an arbitrary return word, which would imply that 
	$\omega$ is an eigenvalue of $(X_\zeta,T,\mu)$ by Theorem \ref{th-host}. This is impossible since Salem substitution $\Z$-actions are weakly-mixing.\\
	
	By Theorem \ref{thm:SalemDist}, there exists $\delta = \delta(\abs{\omega},\abs{\sigma_0(\omega)},L)>0$ such that for all $N$ sufficiently large holds
	\begin{equation}
	\dfrac{1}{N}\#\bigl\{n\leq N \,:\, \norm{\omega A^n \mathbf{1} }_{\R^d/\Gamma^*} \geq \delta \bigr\} \geq \dfrac{1}{3L^d}.
	\end{equation}

		\\

	Then, by the product bound on the twisted Birkhoff sum in Proposition~\ref{prop:BuSo}, for any cylindrical function $f$ and any $x\in X_\zeta$,
	\begin{align*}
		\abs{S^x_N(f,\omega)} &\leq O(1)\cdot {\|f\|}_\infty \cdot N (1-c_3\delta^2)^{\log_{\alpha}(N)/3L^d} \\
		&= O(1)\cdot {\|f\|}_\infty \cdot N(\alpha^{\log_\alpha(1-c_3\delta^2)})^{\log_\alpha(N)/3L^d} \\
		&= O(1)\cdot {\|f\|}_\infty \cdot N^{1+ \log_{\alpha}(1-c_3\delta^2)/3L^d} \\
		&= O(1)\cdot {\|f\|}_\infty \cdot N^{\wt{\vartheta}}, \quad \wt{\vartheta} \in (0,1).
	\end{align*}
	From this we conclude   the proof of Theorem \ref{th:Salem} by a direct use of Lemma \ref{lem-easy1}.
	

	\section{Non-uniformity of the H\"older exponent and the proof of Theorem~\ref{th:lower}} \label{sec:lower}
	
	Both in Theorem \ref{th:Salem} and Theorem 1.1 of \cite{Marshall20} we find a H\"older exponent for the spectral measures at parameters which belong
	to the dense set $\Q(\alpha)$. This H\"older exponent depends on $\om$. In this section we show that is not possible to get a uniform exponent for all $\omega\in \Q(\alpha)$ for the $\Z$-action.
	In fact, the same proof works for the self-similar $\R$-action, which  is a consequence of the next well-known property of Salem numbers.
	\begin{prop}[see \cite{Bug12}, Theorem 3.9]\label{prop:SalemProp}
		Let $\alpha$ be a Salem number and $\eps>0$. Then there exists $\eta \in \Q(\alpha)$ different from zero such that 
		\[
		\norm{\eta\alpha^n}_{\R/\Z} < \eps,
		\]
		for all $n\geq0$.
	\end{prop}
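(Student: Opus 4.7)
The plan is to combine the standard trace identity in the ring of integers of $\Q(\alpha)$ with a pigeonhole argument on the sequence $(\alpha^n)_{n \ge 0}$ viewed through the non-principal Galois embeddings. Let $K=\Q(\alpha)$, write $d = 2m+2$ for the degree, and let $\sig_1 = \mathrm{id}, \sig_2,\ldots,\sig_d$ be the Galois embeddings $K\hookrightarrow \C$, with $\sig_2(\alpha) = \alpha^{-1}$ and $\{\sig_{2k+1}(\alpha),\sig_{2k+2}(\alpha)\} = \{e^{\pm 2\pi i \theta_k}\}$ for $k=1,\ldots,m$. The key property I will use is that $|\sig_j(\alpha)|\le 1$ for all $j\ge 2$.

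First I would observe that for any $\eta\in\Z[\alpha]$ and any $n\ge 0$, the element $\eta\alpha^n$ is an algebraic integer in $K$, so
\[
\Tr(\eta\alpha^n) \;=\; \eta\alpha^n + \sum_{j=2}^d \sig_j(\eta)\sig_j(\alpha)^n \;\in\; \Z.
\]
Since $\eta\alpha^n\in\R$, this yields the pointwise estimate
\[
\|\eta\alpha^n\|_{\R/\Z} \;\le\; \Bigl|\sum_{j=2}^d \sig_j(\eta)\sig_j(\alpha)^n\Bigr| \;\le\; \sum_{j=2}^d |\sig_j(\eta)|\cdot|\sig_j(\alpha)|^n \;\le\; \sum_{j=2}^d |\sig_j(\eta)|,
\]
where the last inequality uses $|\sig_j(\alpha)|\le 1$ for $j\ge 2$. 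Crucially, the right-hand side does not depend on $n$. It therefore suffices to produce a nonzero $\eta\in\Z[\alpha]$ such that $\sum_{j=2}^d|\sig_j(\eta)|<\eps$.

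To construct such an $\eta$, I would apply the pigeonhole principle to the sequence of vectors
\[
\vec v_n \;:=\; \bigl(\alpha^{-n},\,\sig_3(\alpha)^n,\,\sig_5(\alpha)^n,\ldots,\sig_{d-1}(\alpha)^n\bigr) \;\in\; [0,1]\times (S^1)^m,\qquad n\ge 0,
\]
which ranges over a compact subset of $\R\times\C^m$. For any prescribed $\delta>0$, partitioning this compact set into finitely many cells of diameter less than $\delta$ and taking more indices than cells, one finds integers $n>\tilde n\ge 0$ with $\|\vec v_n - \vec v_{\tilde n}\|_\infty <\delta$. Setting
\[
\eta \;:=\; \alpha^n - \alpha^{\tilde n} \;\in\; \Z[\alpha]\setminus\{0\},
\]
one has $|\sig_j(\eta)| = |\sig_j(\alpha)^n - \sig_j(\alpha)^{\tilde n}| < \delta$ for $j=2,3,5,\ldots,d-1$; the remaining indices $j=4,6,\ldots,d$ contribute complex conjugate values and hence the same bounds.

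Combining the two steps, $\sum_{j=2}^d|\sig_j(\eta)| \le (d-1)\delta$, so choosing $\delta = \eps/(d-1)$ completes the argument. I expect no serious obstacle here: the only point requiring minor care is verifying that $\eta\ne 0$ (immediate since $\alpha^n\ne\alpha^{\tilde n}$ for $n\ne\tilde n$, as $\alpha$ is not a root of unity) and that $\Z[\alpha]\subseteq\mathcal{O}_K$ so that the trace is an integer (immediate since $\alpha$ is an algebraic integer). The whole argument in fact shows the stronger effective form: the pigeonhole provides $n,\tilde n$ at most polynomial in $1/\eps$, quantifying how quickly such $\eta$ can be found.
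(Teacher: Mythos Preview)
Your proof is correct. The paper itself does not prove this proposition --- it is quoted from \cite{Bug12} --- so the natural point of comparison is the paper's proof of the matrix analogue, Lemma~\ref{SalemPropMatrix}, which it explicitly describes as ``similar to that of Proposition~\ref{prop:SalemProp}.'' That proof rests on the same trace identity you use, but in place of your pigeonhole step it embeds $\Z[\alpha]$ as a full-rank lattice in $\R^d$ via $\om\mapsto(\om,\om-\sig_0(\om),\om-\sig_1(\om),\ldots)$ and invokes Minkowski's first theorem to locate a nonzero lattice point with all non-principal coordinates small. Your argument is more elementary and has the pleasant feature of producing $\eta$ in the explicit shape $\alpha^n-\alpha^{\tilde n}$; the Minkowski route, on the other hand, is what the paper actually needs for Lemma~\ref{SalemPropMatrix}, where one must simultaneously control the quantities $\eta-\sig_j(\eta)$ rather than just $\sig_j(\eta)$, and this does not reduce as neatly to closeness of two points on a single orbit. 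For the scalar statement at hand, either approach is entirely adequate.
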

	For the $\Z$-action we need a multi-dimensional version of the last proposition.
	\begin{lemma}\label{SalemPropMatrix}
		Let $A\in GL(d,\Z)$ be an irreducible matrix such that its characteristic polynomial is a 
		minimal polynomial of a Salem number $\alpha$, and let $\eps > 0$. There exists $\eta\in\Z[\alpha]\cap(0,1)$ such that
		\[
		\norm{\eta A^n {\bf 1}}_{\R^d/\Z^d} < \eps,
		\]
		for all $n\geq0$.
	\end{lemma}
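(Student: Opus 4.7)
The plan is to adapt the proof of Proposition~\ref{prop:SalemProp} (the scalar Salem property) to the matrix setting, exploiting the parallel between multiplication by $\alpha$ on $\Z[\alpha]$ and the integer automorphism $A$ on $\Z^d$.

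First, I would fix scalings of the Perron--Frobenius eigenvector $\bbe_1$ of $A$ (for $\alpha$) and its dual $\bbe_1^*$ so that both have coordinates in $\Z[\alpha]$, and set $C_1 = \langle \mathbf{1}, \bbe_1^*\rangle$, $\gamma_i = C_1 (\bbe_1)_i$. Then each coordinate of $A^n \mathbf{1}$ is the Galois trace $\Tr(\gamma_i \alpha^n) \in \Z$, and for $\eta \in \Z[\alpha]$ (chosen in a finite-index sublattice ensuring $\eta\gamma_i \in \Z[\alpha]$) the key identity
\[
\eta(A^n\mathbf{1})_i - \Tr(\eta\gamma_i\alpha^n) = (\eta - \sigma_0(\eta))\sigma_0(\gamma_i)\alpha^{-n} + \sum_{j=1}^m 2\,\re\bigl[(\eta - \sigma_j(\eta))\sigma_j(\gamma_i)\alpha_{2j+1}^n\bigr]
\]
expresses $\eta(A^n\mathbf{1})_i$ modulo $\Z$ via the Galois differences $|\eta - \sigma_j(\eta)|$, giving the bound $\|\eta A^n\mathbf{1}\|_{\R^d/\Z^d} \leq K \max_{0\leq j\leq m}|\eta - \sigma_j(\eta)|$ for a constant $K$ depending only on $A$.

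Next, I would invoke the Minkowski-embedding density argument underlying Prop~\ref{prop:SalemProp}: since $\iota(\Z[\alpha]) \subset \R \times \R \times \C^m$ projects densely onto the last $d-1$ conjugate coordinates, one finds $\eta_0 \in \Z[\alpha]$ with $|\sigma_j(\eta_0)|$ arbitrarily small for $j = 0, 1, \ldots, m$ (in particular $\eta_0$ lies within $(d-1)\delta$ of the integer $\Tr(\eta_0)$). A subsequent Dirichlet-type adjustment within the coset $\iota(\Z) \subset \iota(\Z[\alpha])$ produces $\eta \in \Z[\alpha] \cap (0,1)$ with $|\eta - \sigma_j(\eta)| < \eps/K$, closing the argument via the displayed identity. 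This adjustment will likely exploit the Pisot character of $\alpha + \alpha^{-1}$ (a Pisot number when $\alpha$ is Salem of degree $\geq 4$) to leverage its recurrence dynamics and obtain the required element inside the unit interval.

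The main obstacle is reconciling the size constraint $\eta \in (0,1)$ with the requirement of small Galois differences, since a naive integer shift $\eta_0 - \Tr(\eta_0)$ lands in a small neighborhood of $0$ but induces conjugates of size $|\Tr(\eta_0)| \sim |\eta_0|$, which is large by the Minkowski bound $|\eta_0| \gtrsim \delta^{-(d-1)}$. The heart of the proof must therefore construct $\eta$ directly within the appropriate shifted sublattice, simultaneously satisfying both constraints through a careful Diophantine approximation argument---mirroring the recursive construction in Salem's original theorem.
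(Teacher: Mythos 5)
Your reduction --- to finding $\eta\in\Z[\alpha]\cap(0,1)$ with $|\eta-\sigma_j(\eta)|$ small for all $j=0,\dots,m$ --- matches the paper's, via essentially the same trace computation. But the Minkowski step has a genuine gap, and the obstacle you correctly flag in your final paragraph is not one your proposal resolves. Applying Minkowski in the standard conjugate embedding makes $|\sigma_j(\eta_0)|$ small at the cost of $|\eta_0|$ being large, so the quantities you actually need, $|\eta_0-\sigma_j(\eta_0)|\approx|\eta_0|$, remain large. Your ``Dirichlet-type adjustment within $\iota(\Z)$'' cannot help, since $\sigma_j$ fixes $\Z$ pointwise, so $\eta-\sigma_j(\eta)$ is unchanged by integer shifts of $\eta$. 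And the appeal to the ``Pisot character of $\alpha+\alpha^{-1}$'' is incorrect: that number has conjugates $2\cos(2\pi\theta_j)\in(-2,2)$, which need not have modulus $<1$, so it is generally not Pisot.

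The paper's fix is to change the Minkowski embedding. Define $\tau:\Q(\alpha)\to\R^d$ by
\[
\tau(\omega)=\bigl(\omega,\ \omega-\sigma_0(\omega),\ \omega-\sigma_1(\omega),\ \dots,\ \omega-\sigma_m(\omega)\bigr),
\]
identifying each complex slot with $\R^2$. A column-operation reduction to a Vandermonde determinant shows $\tau(\Z[\alpha])$ is a full-rank lattice in $\R^d$. Minkowski's theorem applied to the asymmetric box $|\omega|<E$, $|\omega-\sigma_j(\omega)|<\eps$ (with $E$ large enough depending on $\eps$) produces a nonzero $\tilde\eta\in\Z[\alpha]$ with all Galois differences below $\eps$. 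Since those differences are invariant under integer translation, $\eta:=\{\tilde\eta\}$ has the same Galois differences and lies in $[0,1)$. The missing idea in your proposal is exactly this: take the Galois \emph{differences} $\omega-\sigma_j(\omega)$, rather than the conjugates $\sigma_j(\omega)$, as the lattice coordinates --- then Minkowski directly controls what you need, and the $(0,1)$ constraint comes for free from the fractional part.
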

	
	We continue with the set-up of Section~\ref{sec:Salem}. Namely, 
	$\alpha_1=\alpha>1$ is a Salem number of degree $d=2m+2$, with a single real conjugate $\alpha_2 = \alpha^{-1}$, 
	and complex conjugates of $\alpha$ given by $\alpha_3 = \overline{\alpha_4} = e^{2\pi i \theta_1}, \dots, \alpha_{d-1} = \overline{\alpha_d} = e^{2\pi i \theta_m}$. 
	The symbol $\sigma_j:\Q(\alpha) \hookrightarrow \C$ denotes the Galois embedding sending $\alpha$ to $e^{2\pi i \theta_j}$, for $j=1,\dots,m$, and 
	$\sigma_0:\Q(\alpha) \hookrightarrow \R$ is the Galois embedding sending $\alpha$ to $\alpha^{-1}$.

	\begin{proof}[Proof of Lemma~\ref{SalemPropMatrix}]
		The proof is similar to that of Proposition~\ref{prop:SalemProp}.
		We claim that it suffices to show that for any  $\eps>0$ there exists $\eta\in \Z[\alpha]$ such that for all $j=0,\dots, m$,
		\begin{equation}\label{SuffCond}
			\abs{\eta - \sigma_j(\eta)} < \eps.
		\end{equation}
		Indeed, 
			$$
			\norm{\eta A^n {\bf 1}}_{\R^d/\Z^d}\asymp \max_{\bv \in \Z^d} \norm{\langle \eta A^n {\bf 1}, \bv \rangle}_{\R/\Z},
			$$
			and then equations \eqref{new3} and \eqref{frakh},  with $\om = \eta$ and $L=1$, in which $\bv$ can be replaced by an arbitrary integer vector,
			imply the claim.
			
		Consider the following embedding of $\Q(\alpha)$ into $\R^2\oplus \C^m \simeq \R^d$ (we identify $\C$ with $\R\oplus\R$):
		\begin{align*}
			\tau:\omega &\mapsto \begin{pmatrix}
				\om,&
				\om - \sigma_0(\om),&
				\om - \sigma_1(\om),&
				\om - \sigma_3(\om),&
				\dots,&
				\om - \sigma_{d-1}(\om)
			\end{pmatrix} \\
			&\simeq \begin{pmatrix}
				\om,&
				\om - \sigma_0(\om),&
				\re(\om - \sigma_1(\om)),&
				\im(\om - \sigma_1(\om)),&
				\dots,&
				\im(\om - \sigma_{d-1}(\om))
			\end{pmatrix}
		\end{align*}
		The image of $\Z[\alpha]$ under this map is a full-rank lattice in $\R^d$. Indeed, we just need to verify that for a basis of $\Z[\alpha]$, its image is a basis for $\R^d$. We may use the power basis given by $\{1,\alpha,\dots,\alpha^{d-1}\}$. It suffices to show that the determinant of the next square real matrix of dimension $d$ is different from zero:
		\[
		M = \begin{pmatrix}
			1 & 0 & 0 & 0 & \cdots & 0 \\
			\alpha & \alpha - \alpha^{-1} & \re(\alpha - \alpha_3) & \im(\alpha - \alpha_3) & \cdots & \im(\alpha - \alpha_{d-1}) \\
			\vdots & \vdots & \vdots & \vdots & \vdots & \vdots \\
			\alpha^{d-1} & \alpha^{d-1} - \alpha^{1-d} & \re(\alpha^{d-1} - \alpha_3^{d-1}) & \im(\alpha^{d-1} - \alpha_3^{d-1}) & \cdots & \im(\alpha^{d-1} - \alpha_{d-1}^{d-1})
		\end{pmatrix}
		\]
		(this is just the identification of the matrix $(\tau(1),\tau(\alpha),\dots,\tau(\alpha^{d-1}))^{\sf T}$). By elementary operations on the columns (which we can perform as vectors in $\C^d$, and it does not have an effect on whether the determinant vanishes or not), we obtain the complex matrix
		\[
		\widetilde{M} = \begin{pmatrix}
			1 & 1 & 1 & 1 & \cdots & 1 \\
			\alpha & \alpha^{-1} & \alpha_3 &  \alpha_4 & \cdots & \alpha_d \\
			\vdots & \vdots & \vdots & \vdots & \vdots & \vdots \\
			\alpha^{d-1} &  \alpha^{1-d} & \alpha_3^{d-1} &  \alpha_4^{d-1} & \cdots & \alpha_d^{d-1}
		\end{pmatrix},
		\]
		whose determinant is not equal to zero, since it is the Vandermonde matrix of different numbers.
		
		For any $\eps>0$, let $E>0$ which will be set later. Consider the system of inequalities in the variables $n_i$, where $\tau_j(\omega) = \omega - \sigma_j(\omega)$, $j=0,1,\dots,m$.
		\[
		\left|\sum_{i=0}^{d-1} n_i \alpha^i\right| < E, \quad \left| \sum_{i=0}^{d-1} n_i \tau_j(\alpha^i) \right| < \eps, \quad \text{ for } j=0,1,\dots,m.
		\]
		By ``Minkowski's first theorem" (see \cite{Bug04}, Theorem B.1), if $E>0$ is big enough (depending on $\eps>0$, of course), we can ensure the existence of a non-trivial integer solution $(n_i)_{0\leq i\leq d-1}$ to the latter system. For such a solution, set $\tilde{\eta} = \sum_{i=0}^{d-1} n_i \alpha^i$ and $\eta = \{\tilde{\eta}\}$ (the fractional part), which satisfies \eqref{SuffCond} and the conclusion of the lemma.
	\end{proof}
	To prove that it is impossible to obtain a uniform H\"older exponent in the Salem case, we will use the next lemma.

	Let $A$ be a primitive matrix with a simple spectrum, having the  Perron-Frobenius eigenvalue $\theta$ and the second in modulus eigenvalue $\theta_2$.
	Let $\bu$ be the  Perron-Frobenius eigenvector of norm 1, so that $A\mathbf{u} = \theta \mathbf{u}$, and let $\Hk_2$ be the (real) $A$-invariant subspace complementary to 
	$\Span\{\bu\}$. We can define a norm $\norm{\cdot}$ in $\R^d$ adjusted to the matrix $A$, namely, 
	$\norm{\mathbf{x}} = \norm{c_1 \mathbf{u} + \mathbf{v}} = \max(\abs{c_1}, \norm{\mathbf{v}}_{\mathcal{H}_2})$, where $\bv\in \Hk_2$ and
	$\norm{\mathbf{v}}_{\mathcal{H}_2}$ is a norm in $\mathcal{H}_2$ such that
	\begin{equation}
		\norm{A\mathbf{v}}_{\mathcal{H}_2} \leq \abs{\theta_2}\cdot \norm{\mathbf{v}}_{\mathcal{H}_2}.
	\end{equation}
	
	\begin{lemma}\label{lem:iter2}
		Let $A$ be a matrix as above, and let $\eps > 0$ be such that $\abs{\theta_2} + \eps < \theta$ and $\delta \in (0, 1)$. Let $\Delta A$ be a (complex) matrix such that
		\begin{equation} \label{H11}
			\norm{\Delta A} < \min\{\eps, \delta(\theta - \abs{\theta_2} - \eps) \}.
		\end{equation} 
		Then for any $\mathbf{x} = c_1\mathbf{u} + \mathbf{v}$, with $\mathbf{v}\in\mathcal{H}_2$ and $\norm{\mathbf{v}}_{\mathcal{H}_2} \leq \delta c_1$ we have
		\begin{equation}\label{C11}
			(A + \Delta A)\mathbf{x} = c_2\mathbf{u} + \mathbf{w}, \text{ with } c_2>c_1(\theta-\eps),\, \mathbf{w}\in\mathcal{H}_2,\, \norm{\mathbf{w}}_{\mathcal{H}_2} < c_2\delta.
		\end{equation}
	\end{lemma}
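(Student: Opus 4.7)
The plan is to do the decomposition $(A+\Delta A)\mathbf{x} = c_2\mathbf{u} + \mathbf{w}$ explicitly and then control $c_2$ from below and $\|\mathbf{w}\|_{\Hk_2}$ from above using the structure of the adapted norm. I would first write
\[
(A+\Delta A)(c_1\mathbf{u}+\mathbf{v}) = c_1\theta\mathbf{u} + A\mathbf{v} + \Delta A\,\mathbf{x},
\]
and split $\Delta A\,\mathbf{x}$ along the decomposition $\C^d = \Span\{\mathbf{u}\}\oplus (\Hk_2)_\C$ as $\Delta A\,\mathbf{x} = \alpha\mathbf{u} + \mathbf{z}$. This gives $c_2 = c_1\theta + \alpha$ and $\mathbf{w} = A\mathbf{v} + \mathbf{z}$, so the remaining task is to estimate $|\alpha|$ and $\|\mathbf{z}\|_{\Hk_2}$.

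Because the adapted norm is the maximum of the two coordinates, the splitting is a contraction: $\max\{|\alpha|,\|\mathbf{z}\|_{\Hk_2}\} = \|\Delta A\,\mathbf{x}\| \le \|\Delta A\|\cdot\|\mathbf{x}\|$. Moreover, the hypothesis $\|\mathbf{v}\|_{\Hk_2}\le \delta c_1$ combined with $\delta<1$ forces $\|\mathbf{x}\| = c_1$. Hence both $|\alpha|\le \|\Delta A\|\,c_1$ and $\|\mathbf{z}\|_{\Hk_2}\le \|\Delta A\|\,c_1$.

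The lower bound on $c_2$ follows immediately:
\[
c_2 \ge c_1\theta - |\alpha| \ge c_1\bigl(\theta - \|\Delta A\|\bigr) > c_1(\theta-\eps),
\]
using $\|\Delta A\|<\eps$. For the upper bound on $\mathbf{w}$, combine the contraction on $\Hk_2$ with the bound on $\mathbf{z}$:
\[
\|\mathbf{w}\|_{\Hk_2} \le |\theta_2|\cdot\|\mathbf{v}\|_{\Hk_2} + \|\mathbf{z}\|_{\Hk_2} \le |\theta_2|\,\delta\, c_1 + \|\Delta A\|\,c_1.
\]
It then remains to verify $|\theta_2|\delta c_1 + \|\Delta A\|\,c_1 < c_2\delta$. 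Using $c_2 > c_1(\theta-\eps)$, a sufficient condition is $\|\Delta A\| < \delta(\theta - \eps - |\theta_2|)$, which is exactly the second clause of \eqref{H11}. This chain of inequalities gives \eqref{C11}.

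There is no real obstacle here, since the adapted norm was precisely designed so that the projection onto each invariant piece has norm one and the contraction rate on $\Hk_2$ is explicit. The only point that requires a moment of care is ensuring that the norm $\|\cdot\|$ is used consistently (matrix norm is subordinate to the vector norm on $\C^d$, and $\|\mathbf{x}\|=c_1$ under the hypothesis), so that the two bounds $|\alpha|\le\|\Delta A\|c_1$ and $\|\mathbf{z}\|_{\Hk_2}\le\|\Delta A\|c_1$ hold simultaneously without picking up an extra factor.
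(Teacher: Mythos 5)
Your proof is correct and takes essentially the same approach as the paper: decompose $(A+\Delta A)\mathbf{x}$ along $\Span\{\mathbf{u}\}\oplus\Hk_2$, use the fact that in the adapted max-norm both projections are contractions (so that $\|\mathbf{x}\|=c_1$ and the two components of $\Delta A\,\mathbf{x}$ are simultaneously bounded by $\|\Delta A\|c_1$), and then feed the two clauses of \eqref{H11} into the lower bound on $c_2$ and the upper bound on $\|\mathbf{w}\|_{\Hk_2}$. The paper phrases the splitting via the projection operators $\Proj_{\mathbf{u}}$ and $\Proj_2$ with $\|\Proj_{\mathbf{u}}\|=\|\Proj_2\|=1$, whereas you name the two components $\alpha$ and $\mathbf{z}$ directly, but the computation and the chain of inequalities are identical.
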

	
	\begin{proof}
		We have
		\[
		A\mathbf{x} = c_1 \theta \mathbf{u} + A\mathbf{v},\quad A\mathbf{v} \in \mathcal{H}_2 ,\,\quad \norm{A\mathbf{v}}_{\mathcal{H}_2} \leq\abs{\theta_2} \cdot  \norm{\mathbf{v}}_{\mathcal{H}_2}.
		\]
		Denote by $\text{Proj}_\mathbf{u}$ the projection onto $\mathbf{u}$ parallel to $\mathcal{H}_2$ and by $\text{Proj}_2$ the projection onto $\mathcal{H}_2$ parallel
		to $\mathbf{u}$. Observe that using our adjusted norm we have $\norm{\text{Proj}_\mathbf{u}} = \norm{\text{Proj}_2} = 1$. Now,
		\[
		c_2 \mathbf{u} = \text{Proj}_\mathbf{u} ((A + \Delta A) \mathbf{x}) = c_1 \theta\mathbf{u}  + \text{Proj}_\mathbf{u} (\Delta A \mathbf{x}),
		\]
		hence
		\[
		c_2 \geq c_1 \theta - \norm{\text{Proj}_\mathbf{u}} \cdot \norm{\Delta A} \cdot \norm{\mathbf{x}} = c_1 (\theta - \norm{\Delta A}),
		\]
		because by our assumption, $\text{Proj}_2\mathbf{x} = \mathbf{v}$ and $\norm{\mathbf{v}}_{\mathcal{H}_2} \leq \delta c_1$, where $\delta\in(0,1)$,
		hence $\norm{\mathbf{x}}=c_1$.
		Thus from $\norm{\Delta A} < \eps$ in 
		\eqref{H11} we obtain $c_2 > c_1 (\theta - \eps)$ in \eqref{C11}. Further,
		\[
		\mathbf{w} = A\mathbf{v} + \text{Proj}_2 (\Delta A \mathbf{x}),
		\]
		hence
		\begin{align*}
			\norm{\mathbf{w}}_{\mathcal{H}_2} &\leq\abs{\theta_2} \cdot \norm{\mathbf{v}}_{\mathcal{H}_2} + \norm{\text{Proj}_2} \cdot \norm{\Delta A} \cdot \norm{\mathbf{x}}\\
			&\leq \abs{\theta_2}  \cdot c_1 \delta + c_1 \norm{\Delta A}\\
			&\leq c_1 \delta\abs{\theta_2}  + c_1\delta(\theta - \abs{\theta_2} - \eps)\\
			&\leq c_1 \delta(\theta - \eps)\\ 
			&\leq c_2 \delta
		\end{align*}
		as desired, where we used \eqref{H11} in the third line.
	\end{proof}

	To finish the proof of Theorem~\ref{th:lower}, 
	we will iterate Lemma \ref{lem:iter2} starting with $\mathbf{x} = \mathbf{u}$. To see why this is enough, we recall that a  formula for the lower local dimension of spectral measures is given by the result below. First we recall the definition of the \textit{spectral cocycle} defined in \cite{BuSo20}.
	
	\begin{defi}\label{generalCocycle}
		Let $\zeta(a) = w_1\dots w_{k_a}$ and $e(x) = e^{-2\pi ix}$. The \textit{spectral cocycle} is the complex matrix cocycle over the toral endomorphism $\xi \mapsto \Sf_\zeta^{\sf T}$ on $\mathbb{T}^d = (\R/\Z)^d$, with the $(a,b)\in\A^2$ entry given by
		\begin{align*}
			\mathscr{C}_{\zeta}(\xi,1)(a,b) &= \sum_{j=1}^{k_a} \delta_{w_j,b} \:e(\xi_{w_1} + \dots + \xi_{w_{j-1}}), \quad \xi \in \mathbb{T}^d; \\
			\mathscr{C}_{\zeta}(\xi,n) &=  \mathscr{C}_{\zeta}(({\sf S}^{\sf T}_{\zeta})^{n-1}\xi,1) \dots \mathscr{C}_{\zeta}(\xi,1).
		\end{align*}
		The local upper Lyapunov
		exponent of the cocycle at the point $\xi\in \T^d$ is defined by
		$$
		\chi^+(\xi) = \chi^+_{\zeta}(\xi):= \limsup_{n\to \infty} \frac{1}{n} \log \|\mathscr{C}_\zeta(\xi,n)\|.
		$$
	\end{defi}
	
	\begin{sloppypar}
		\begin{remark}\label{rm:SpecCo}
			Note that for $\mathbf{0} = (0,\dots,0)^{\sf T}$ we have $\mathscr{C}_{\zeta}(\mathbf{0},1) = {\sf S}^{\sf T}_\zeta$ and 
			$\mathscr{C}_{\zeta}(\om\mathbf{1},n) = \mathscr{C}_{\zeta}\bigl(\om( {\sf S}^{\sf T}_{\zeta})^{n-1}\mathbf{1},1\bigr)$.
		\end{remark}
	\end{sloppypar}

	
	\begin{prop}
		Let $\zeta$ be a primitive aperiodic substitution on $\Ak$, and let $\sig_f$ be the spectral measure on $[0,1)\cong \T$, corresponding to 
		$f =  \sum_{j=0}^{d-1} b_j \One_{[j]}$, a cylindrical function of level 0 on $X_\zeta$. Then for any $\om \in (0,1)$, such that $\chi^+(\om\mathbf{1})>0$, 
		for Lebesgue-a.e.\ $(b_0,\ldots,b_{d-1})$ holds
		\be \label{eq:Lyap-dim}
		\und{d}(\sig_f,\om) = 2 - \frac{2\chi^+(\om\mathbf{1})}{\log \theta}\,,
		\ee
		where $\theta$ is the PF eigenvalue of the substitution matrix $\Sf_\zeta$.
	\end{prop}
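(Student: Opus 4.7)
The plan is to follow the strategy of \cite{BuSo20}, where this formula is established; here I outline the ingredients as they adapt to the present setting. The first ingredient is a key identity connecting twisted Birkhoff sums to the spectral cocycle: for any $x \in X_\zeta$ beginning with the word $\zeta^n(a)$ and $N = |\zeta^n(a)|$, a direct computation from the definition of $\mathscr{C}_\zeta$ gives
$$ S^x_N(f, \om) = \bigl(\mathscr{C}_\zeta(\om\mathbf{1}, n)\mathbf{b}\bigr)_a, \quad \mathbf{b} = (b_1,\ldots,b_d)^{\sf T}. $$
Combining this with the hierarchical $\zeta$-adic decomposition of $N$, unique ergodicity, and Remark~\ref{rm:SpecCo}, one obtains the two-sided estimate
$$ G_N(f, \om) \asymp_\zeta \frac{\|\mathscr{C}_\zeta(\om\mathbf{1}, n)\mathbf{b}\|^2}{N}, \quad N \asymp \theta^n. $$

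The inequality $\und{d}(\sig_f, \om) \geq 2 - 2\chi^+(\om\mathbf{1})/\log\theta$ then follows by plugging the upper half of the previous bound into Lemma~\ref{lem-easy1} at scales $r \asymp \theta^{-n}$, yielding $\sigma_f(B(\om, r)) \lesssim r^2 \|\mathscr{C}_\zeta(\om\mathbf{1}, n)\mathbf{b}\|^2$, and taking $\log$ divided by $\log r$. For the reverse inequality, I would choose a subsequence $n_k$ realizing the $\limsup$ defining $\chi^+(\om\mathbf{1})$ and convert the lower half of the two-sided estimate into a lower bound on $\sig_f$ via a Fej\'er-kernel argument of the type $\sigma_f(B(\om, c/N)) \gtrsim G_N(f,\om)/N$, thereby extracting a sequence of radii $r_k \to 0$ on which $\sig_f(B(\om,r_k))$ is comparable to $r_k^{\,2-2\chi^+(\om\mathbf{1})/\log\theta + o(1)}$.

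The reduction from $\|\mathscr{C}_\zeta(\om\mathbf{1}, n)\mathbf{b}\|$ to $\|\mathscr{C}_\zeta(\om\mathbf{1}, n)\|$ for Lebesgue-a.e.\ $\mathbf{b}$ is a standard Borel--Cantelli argument. For each $\delta>0$ and each $n$, the set
$$ E_n(\delta) = \bigl\{\mathbf{b}\in [-M,M]^d :\ \|\mathscr{C}_\zeta(\om\mathbf{1}, n)\mathbf{b}\| \leq e^{-\delta n}\|\mathscr{C}_\zeta(\om\mathbf{1}, n)\|\bigr\} $$
is contained in a $O(e^{-\delta n})$-neighborhood of the hyperplane orthogonal to the top left singular vector of $\mathscr{C}_\zeta(\om\mathbf{1}, n)$, so that its Lebesgue measure is $O(e^{-\delta n})$, hence summable; consequently almost every $\mathbf{b}$ lies in only finitely many $E_n(\delta)$ and the limsup equality $\chi^+_\mathbf{b}(\om\mathbf{1}) = \chi^+(\om\mathbf{1})$ holds.

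The main obstacle is the matching \emph{lower} bound on $\sig_f(B(\om,r))$: Lemma~\ref{lem-easy1} yields only the upper direction for free, whereas the reverse requires a Fej\'er-kernel argument exploiting the precise alignment of scales $r\asymp\theta^{-n}$ with the block structure of $\zeta^n$. In \cite{BuSo20} this is handled by combining positivity of letter frequencies (so that the main contribution to $G_N$ is not cancelled by phase) with a test-function construction localized near $\om$. Once the two-sided bound on $G_N$ and the a.e.\ identity $\chi^+_\mathbf{b}(\om\mathbf{1}) = \chi^+(\om\mathbf{1})$ are in hand, taking $\liminf_{r\to 0}$ through the subsequence $r = r_k$ yields the claimed equality.
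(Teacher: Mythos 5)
Your sketch is a genuinely different route from the paper's. The paper's own proof is a three-line reduction: pass to the suspension flow with constant-one roof function, where the local dimension formula is already available as \cite[Corollary~4.4(ii)]{BuSo20} for the flow spectral measure $\sig_F$, and then transfer to $\sig_f$ via the Berlinkov--Solomyak relation $d\sig_F(\om)=(\sin(\pi\om)/\pi\om)^2\,d\sig_f(\om)$, whose density is bounded above and away from zero on $(0,1)$, so the lower local dimensions of $\sig_F$ and $\sig_f$ coincide there. You instead propose to re-derive the ingredients of the flow result (cocycle/twisted-Birkhoff-sum identity, two-sided estimate on $G_N$, Fej\'er-kernel lower bound, Borel--Cantelli over $\mathbf{b}$) directly for the $\Z$-action. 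That is correct in outline and would give a self-contained proof, but it amounts to reproving the main technical content of \cite{BuSo20} rather than invoking it; the paper's reduction is the shorter path and avoids having to redo the two-sided $G_N$ estimate, which is where the real effort lies. Two small remarks on your version: the identity $S^x_N(f,\om)=\bigl(\mathscr{C}_\zeta(\om\mathbf{1},n)\mathbf{b}\bigr)_a$ requires $N=|\zeta^n(a)|$ and $x$ to begin with $\zeta^n(a)$ at the origin---for general $N$ and $x$ the $\zeta$-adic decomposition produces a sum of such cocycle terms with phase factors, and controlling that sum from below is exactly the nontrivial part of establishing $G_N\gtrsim\|\mathscr{C}_\zeta(\om\mathbf{1},n)\mathbf{b}\|^2/N$; and the slab in your Borel--Cantelli set should be orthogonal to the top \emph{right} singular vector of $\mathscr{C}_\zeta(\om\mathbf{1},n)$, not the left one, since it is $|\langle\mathbf{b},v_1\rangle|$ that controls $\|\mathscr{C}_\zeta(\om\mathbf{1},n)\mathbf{b}\|$ from below.
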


	\begin{proof} 
		Consider the suspension flow with a constant-one roof function over the
		substitution automorphism, acting on $X_\zeta\times [0,1]$, and consider $F(x,t) = f(x)$, a simple cylindrical function on this space. For the suspension $\R$-action 
		there is a spectral measure  on $\R$, corresponding to $F$, which we denote by $\sig_F$.
		By \cite[Lemma 5.6]{BerSol}, there is a simple relation between $\sig_F$ and the spectral measure $\sig_f$ on $\T\cong [0,1)$:
		\[d\sig_F(\om)=\left(\frac{\sin(\pi \om)}{\pi \om}\right)^2 \cdot d\sig_f(\om),\ \ \om\in (0,1),\]
		extended 1-periodically to $\R\setminus \Z$. Corollary 4.4(i) in \cite{BuSo20}, restricted to the special case of a single substitution and the constant-one roof function, implies that for any $\om \in \R\setminus \Z$, for Lebesgue-a.e.\ $(b_0,\ldots,b_{d-1})$, such that $\chi^+(\om\mathbf{1})>0$, holds
		$$
		\und{d}(\sig_F,\om) = 2 - \frac{2\chi^+(\om\mathbf{1})}{\log \theta}\,,
		$$
		and \eqref{eq:Lyap-dim} follows.
	\end{proof}

	Let us say that a cylindrical function $f =  \sum_{j=0}^{d-1} b_j \One_{[j]}$ is {\em admissible} if \eqref{eq:Lyap-dim} holds for all $\om\in \Z[\alpha]\setminus \{0\}$. 
	Clearly, a.e.\ $f\in \Cyl(X_\zeta)$ is
	admissible. Moreover, since for any constant $c\in\C$, $\sig_{f+c\One}$ is absolutely continuous with respect to $\sig_f + \sig_{\One}=\sig_f + \delta_0$, see \cite[Corollary 2.1]{Queff}, we have
	$$
	\und{d}(\sig_{f+c\One},\om)= \und{d}(\sig_f,\om)\ \ \mbox{for all}\ \om\in (0,1),
	$$
	and hence a.e.\ $f\in \Cyl(X_\zeta)$ of mean zero is admissible as well.

	\begin{proof}[Proof of Theorem~\ref{th:lower}]
		(i)  Let $f$ be an admissible cylindrical function.
		Suppose by contradiction there exists a uniform $\vartheta>0$ such that for all $\om \in (0,1)$ we have 
		\[
		\sigma_{f}(B_r(\omega)) \leq Cr^{\vartheta}, \quad \forall r<r_0.
		\]
		In other words, for all $\om \in (0,1)$ we have $\underline{d}(\sig_f,\omega) \geq \vartheta$. The matrix-function $\xi \mapsto \mathscr{C}_{\zeta}(\xi,1)$ is uniformly continuous on the torus $\T^d$, hence, in view of Remark \ref{rm:SpecCo}, for any $\eps>0$ there exists $\om=\eta$ from Lemma \ref{SalemPropMatrix} for an appropriate $\eps'=\eps'(\eps)$ (using the continuity of the matrix-function), such that
		\[
		\norm{ {\sf S}_\zeta^{\sf T} - \mathscr{C}_{\zeta}(\om ( {\sf S}^{\sf T}_{\zeta})^{n-1} \mathbf{1},1)} < \eps, \quad \text{ for all } n\geq 1.
		\]
		This means  we can apply Lemma \ref{lem:iter2} iteratively, with $A = {\sf S}_\zeta^{{\sf T}}$, $\mathbf{x} = \mathbf{u}$, the Perron-Frobenius eigenvector, and $\Delta A = {\sf S}_\zeta^{{\sf T}} - \mathscr{C}_{\zeta}\bigl(\om{\sf S}^{\sf T}_{\zeta})^{n-1}\mathbf{1},1\bigr)$ at the $n$-th step, for $n=1,\ldots,N$.	After $N$ iterations of the lemma, we obtain $\mathscr{C}_{\zeta}(\om\mathbf{1},N) = c_N \bu + \bw$, with $\bw\in \Hk_2$ and $c_N \ge (\theta-\eps)^N$, hence
		\[
		\norm{\mathscr{C}_{\zeta}(\om\mathbf{1},N) } \geq \const\cdot(\theta-\eps)^N,
		\]
		and we obtain
		\[
		\chi^+(\omega\mathbf{1}) \geq \log(\theta-\eps).
		\]
		But this yields $2 - \dfrac{2\log(\theta-\eps)}{\log\theta}\geq  2 - \dfrac{2\chi^+(\omega\mathbf{1})}{\log\theta} = \underline{d}(\sig_f,\omega) \geq \vartheta > 0$, 
		in view of \eqref{eq:Lyap-dim}, which leads to a contradiction for $\eps>0$ small enough.
		
		\medskip

		(ii) This follows from part (i) immediately, by \cite[Theorem 3.7]{Knill},
		which is a general version of Last's theorem
		\cite[Theorem 3.1]{Last96}.
	\end{proof}
	

\end{document}